\documentclass[12pt]{amsart}

\usepackage{amsmath}
\usepackage{mathtools}
\usepackage{amssymb}
\usepackage{amsthm}
\usepackage{amscd}
\usepackage{mathrsfs}
\usepackage{dsfont}
\usepackage{bbm}
\usepackage{enumitem}
\usepackage{float}
\usepackage[final]{showkeys}
\usepackage[colorlinks,linkcolor=red,anchorcolor=blue,citecolor=blue]{hyperref}

\newtheorem{thm}{Theorem}[section]
\newtheorem{cor}[thm]{Corollary}
\newtheorem{prop}[thm]{Proposition}
\newtheorem{lem}[thm]{Lemma}

\theoremstyle{definition}

\newtheorem{ex}[thm]{Example}
\theoremstyle{remark}
\newtheorem{rem}[thm]{Remark}
\numberwithin{equation}{section}

\setlist[enumerate]{label=$({\arabic*})$}

\newcommand{\abs}[1]{\left\lvert#1\right\rvert}
\newcommand{\norm}[1]{\left\lVert#1\right\rVert}
\newcommand{\e}{\varepsilon}
\newcommand{\E}{\mathbb{E}}
\newcommand{\R}{\mathbb{R}}

\newcommand{\cE}{\mathcal{E}}
\newcommand{\cF}{\mathcal{F}}
\newcommand{\cG}{\mathcal{G}}
\newcommand{\cH}{\mathcal{H}}

\newcommand{\cK}{\mathcal{K}}
\newcommand{\cM}{\mathcal{M}}
\newcommand{\cN}{\mathcal{N}}
\newcommand{\cO}{\mathcal{O}}

\newcommand{\cU}{\mathcal{U}}

\DeclareMathOperator{\Ent}{Ent}
\DeclareMathOperator{\diag}{diag}
\DeclareMathOperator{\MA}{MA}
\DeclareMathOperator{\PSH}{PSH}
\DeclareMathOperator{\Ric}{Ric}
\DeclareMathOperator{\sgn}{sgn}
\DeclareMathOperator{\supp}{Supp}
\DeclareMathOperator{\tr}{tr}
\DeclareMathOperator{\TV}{TV}
\DeclareMathOperator{\vol}{vol}

\newcommand{\ddc}{dd^c}

\newcommand{\underalign}[2]{\quad\underset{\mathclap{\strut #1}}{#2}\quad}

\title{Metric entropy of K\"ahler potentials}

\author{Tomoyuki HISAMOTO}
\address{Graduate School of Mathematics, Nagoya University, Nagoya, Japan}
\email{{\tt hisamoto@math.nagoya-u.ac.jp}}

\date{September 23, 2026}

\subjclass[2020]{Primary 32Q15; Secondary 32U05, 53C55, 94A17}
\keywords{K\"ahler potentials, metric entropy, Darvas metric, relative entropy, Monge--Amp\`ere equation, worst-case coding}

\begin{document}

\begin{abstract}
We prove sharp metric entropy estimates for spaces of K\"ahler potentials. In complex dimension $n$, normalized potentials have Kolmogorov entropy of order $\e^{-n}$ in the background $L^1$ metric. On a polarized manifold, a relative-entropy sublevel has the same order in the Mabuchi--Darvas $d_1$ metric, including its full finite-energy closure. The upper bound is $C_X(1+B)^{n+1}\e^{-n}$ for entropy budget $B$. For toric potentials, the sharp exponent is $n/2$.
\end{abstract}
\maketitle
\begingroup
\renewcommand{\baselinestretch}{0.94}\selectfont
\tableofcontents
\endgroup

\section{Introduction}
The study of canonical K\"ahler metrics leads naturally to the
geometry of the space of K\"ahler metrics in a fixed cohomology class. 
Mabuchi's Riemannian structure \cite{Mab87} and the finite-energy
metric completions developed by Darvas
\cite{Dar15} provide a framework for studying this 
infinite-dimensional space through its intrinsic distances. 
For the Mabuchi--Darvas distance $d_1$, the compactness of normalized
relative-entropy sublevels follows from the pluripotential theory
of Berman, Boucksom, Eyssidieux, Guedj, and Zeriahi
\cite[Theorem~2.17 and Corollary~2.19]{BBEGZ19}. 
It plays a crucial role in the variational approach to canonical Kähler metrics. 
This qualitative compactness raises a quantitative question: 
how many metric balls of radius $\e$ are needed to cover
such a sublevel as $\e$ tends to zero?

For a metric space $(S,d)$ let $N_\e(S,d)$ be the least number of open balls of radius $\e$, with centers in $S$, covering $S$, and set
\begin{equation*}
 H_\e(S,d)=\log N_\e(S,d).
\end{equation*}
All logarithms are natural. This is the metric entropy, systematically developed by Kolmogorov and Tikhomirov \cite{KT59}. Its classical models include Sobolev classes \cite{BirSol67}, convex bodies \cite{Bro76}, and convex functions \cite{GW17}. Entropy questions for bounded holomorphic functions were studied by Babenko \cite{Bab58} and Erokhin \cite{Ero58}, and subsequently related to pluripotential theory by Zakharyuta \cite{Zak94} and Nivoche \cite{Niv04}. Further developments include \cite{BN22,Fin26}. Here the objects are real potentials constrained by complex Hessian positivity, and the intrinsic metric depends on their Monge--Amp\`ere measures.

\subsection*{Three entropy estimates}
Let $(X,\omega)$ be a compact connected K\"ahler manifold of complex dimension $n\geq1$. Write $\omega_u=\omega+\ddc u$, $V=\int_X\omega^n$, and $\nu=V^{-1}\omega^n$. Put
\begin{equation*}
 \cH_\omega=\{u\in C^\infty(X,\R):\omega_u>0,\ \sup_Xu=0\}.
\end{equation*}
Let $\PSH_0(X,\omega)$ be the upper semicontinuous $\omega$-plurisubharmonic potentials with supremum zero, where $\omega$-plurisubharmonic means $\omega+\ddc u\geq0$ as a current. The first natural 
candidate 
in our mind 
is the 
background $L^1$ metric  
\begin{equation*}
 d_{L^1}(u,v)=\int_X\abs{u-v}d\nu.
\end{equation*}
It is well-known that the space $\PSH_0(X,\omega)$ is compact in 
$L^1(\nu)$.

\begin{thm}
\label{thm-l1-main}
There are constants $c_X,C_X,\e_X>0$, depending only on $(X,\omega)$, such that
\begin{equation}
 c_X\e^{-n}\leq H_\e(\cH_\omega,d_{L^1})
 \leq C_X\e^{-n},\qquad 0<\e<\e_X.
 \label{eq-l1-main}
\end{equation}
Moreover,
\begin{equation}
 \overline{\cH_\omega}^{\,L^1}=\PSH_0(X,\omega),
 \label{eq-l1-closure}
\end{equation}
and the same two-sided estimate holds for this closure, after changing the constants.
\end{thm}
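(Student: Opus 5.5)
We will prove the upper bound for the larger set $\PSH_0(X,\omega)$, prove the lower bound for $\cH_\omega$, and then link the two by the closure statement \eqref{eq-l1-closure}.

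\emph{Closure.} We first establish \eqref{eq-l1-closure}.
\begin{itemize}
\item[$\subseteq$] $\PSH_0(X,\omega)$ is closed in $L^1(\nu)$. An $L^1$-limit of $\omega$-psh functions has an $\omega$-psh representative. The normalization survives because of Hartogs' lemma together with the identity $\sup_X u=\operatorname{ess\,sup}_X u$ for quasi-psh $u$.
\item[$\supseteq$] Given $u\in\PSH_0(X,\omega)$, Demailly's regularization, or the Błocki--Kołodziej decreasing smooth approximation on compact Kähler manifolds, gives smooth $u_j\downarrow u$ with $\omega+\ddc u_j\geq -\e_j\omega$. Set $v_j=(1+\e_j)^{-1}u_j$, so that $\omega+\ddc v_j\ge0$. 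Replacing $v_j$ by $(1-\delta_j)v_j$ makes the form strictly positive. Finally subtract the supremum; the suprema converge to $0$ by Hartogs. Monotone convergence then gives $L^1$ convergence.
\end{itemize}
Covering numbers of a set and of its closure differ only by changing $\e$ by a factor of $2$. So once \eqref{eq-l1-closure} holds, the two-sided estimate transfers to $\PSH_0(X,\omega)$ with new constants.

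\emph{Upper bound.} We reduce to local convex-type classes.
\begin{enumerate}
\item Cover $X$ by finitely many coordinate balls $B_i$ on which $\omega=\ddc\rho_i$ with $\rho_i$ smooth. Then $u+\rho_i$ is plurisubharmonic on $B_i$.
\item Uniform bounds for $\PSH_0$: $\int|u|\,d\nu\le C$, and uniform integrability of $e^{-\alpha u}$ (Skoda/Hörmander). These bound $u+\rho_i$ in $L^1$ on a slightly larger ball, together with an exponential tail estimate $\nu(u<-t)\le Ce^{-\alpha t}$.
\item Truncate at level $-T$ with $T\approx \alpha^{-1}\log(1/\e)$. The contribution of $\{u<-T\}$ to the $L^1$ distance is $O(\e)$. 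However, the truncated function is no longer $\omega$-psh. Instead, we should use the distance-preserving observation that $\max(u,-T)$ is again in $\PSH_0$, and $\|u-\max(u,-T)\|_{L^1}\le C\int_T^\infty e^{-\alpha t}dt$.
\item It then suffices to cover the bounded class $\{-T\le u\le0\}$. Counting naively, plurisubharmonic functions bounded by $T$ in real dimension $2n$ behave like subharmonic functions, whose $L^1$ entropy is not of order $\e^{-n}$. Subharmonic classes are much larger, so the complex structure must be used.
\item The key fact: $\e^{-n}$ is exactly the entropy of bounded convex functions in $n$ real variables in $L^1$ (Guntuboyina--Sen \cite{GW17}). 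Our plan is to slice. The averages of $u$ over the torus fibers of polydisc coordinates, $\hat u(x)=\int u(e^{x+i\theta})\,d\theta$, are convex in $x\in\R^n$. The deviation $u-\hat u$ should be controlled by the Laplacian mass, which is bounded by $\int\omega_u\wedge\omega^{n-1}=V$, at scale $\e$.
\item Concretely, we intend to prove an approximation lemma. On a polydisc of radius $r$, $u$ is $L^1$-close within $Cr\,(\text{mass})$ to a function determined by convex data. Choosing $r\sim\e$ would give $\e^{-2n}$ pieces, which is too many, so the scales must be balanced.
\end{enumerate}
I expect this step to be the main obstacle: getting $\e^{-n}$ rather than $\e^{-2n}$ or $\e^{-n}\log$ factors. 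A more robust alternative is a direct coding argument. Approximate $u$ in $L^1$ by the Bergman/Demailly approximants $\frac1{k}\log\sum|s_j|^2_{h^k}$ (or local analogues) with $k\sim\e^{-1}$, which achieves $L^1$ error $O(\log k/k)$. Then quantize the coefficients; this describes $u$ by $\dim H^0(kL)\sim k^n$ parameters, each needing $O(\log(1/\e))$ bits. That yields $\e^{-n}\log(1/\e)$, and removing the log will require the sharper convex-function bracketing argument above. I would try to remove it via Dudley-type chaining across dyadic scales $k=2^m$, using that successive approximants differ by $O(2^{-m})$ in $L^1$.

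\emph{Lower bound.} It suffices to build a $2\e$-separated family in $\cH_\omega$ of size $e^{c\e^{-n}}$.
\begin{enumerate}
\item In a chart, take $\e^{-n}$ disjoint cells that are thin in the imaginary directions. On each cell, place a bump $\phi_j(z)=\e\,\chi(\operatorname{Re} z/\e^{1})\dots$ of pluriharmonic-compatible shape: functions depending only on $x=\operatorname{Re} z$ are psh iff convex. So use local convex perturbations $\pm\e^2 g((x-x_j)/\e)$ supported in cubes of side $\e$ in $x$-space and independent of $y$ in a fixed range, cut off smoothly.
\item Their Hessians are $O(1)$, so for small amplitude constant $c$, $\omega+\ddc(\text{sum})>0$. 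This is the complex analogue of the Bronshtein/Guntuboyina--Sen construction.
\item Each sign pattern $\sigma\in\{\pm1\}^{N}$, $N\sim\e^{-n}$, gives a potential. By Varshamov--Gilbert we can select $e^{cN}$ patterns with pairwise Hamming distance at least $N/8$.
\item The $L^1$ distance between two selected potentials is then $\gtrsim N\cdot \e^{2}\cdot\e^{n}\sim\e^{2}$. To make this $\gtrsim\e$, we rescale by taking cells of side $\sqrt\e$ in $x$, which gives $N\sim\e^{-n/2}$. This mismatch is consistent with the toric exponent $n/2$ mentioned in the abstract.
\item To reach $\e^{-n}$, we must instead use genuinely complex perturbations. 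Take bumps $\e\,\psi((z-z_j)/\sqrt\e)$ with $\psi$ smooth, which have Hessian $O(1)$, on $\e^{-n}$ disjoint balls of radius $\sqrt\e$ in $\mathbb C^n$; their number is $(\e^{-1/2})^{2n}=\e^{-n}$.
\item Each bump then contributes $\e\cdot\e^{n}$ to the $L^1$ distance, so a Hamming distance $\gtrsim N$ gives total distance $\gtrsim\e$.
\end{enumerate}
Adding a fixed strictly $\omega$-psh normalization (for instance, working where $\omega$ has room) and subtracting the supremum, which changes the functions by a controlled constant, completes the lower bound. The same balls-of-radius-$\sqrt\e$ scaling explains why the upper bound should be $\e^{-n}$, and this guides the chaining argument.
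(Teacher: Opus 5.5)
Your closure argument and your final lower-bound construction (steps~4--5: height-$\e$ bumps on $\asymp\e^{-n}$ disjoint balls of radius $\sqrt\e$, small amplitude so that $\omega_u>0$, and a Gilbert--Varshamov selection) are essentially the paper's. With $\pm$ signs you should note that the subtracted supremum is the same constant for every codeword; alternatively, use nonpositive bumps and $\{0,1\}$-words, as the paper does.

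\textbf{The upper bound is a genuine gap.} You give no upper-bound argument, and the premise that steers you away from one is false. In step~(4) you claim that subharmonic-type classes are too large and that the complex structure must be used. In fact, on a compact Riemannian manifold of real dimension $d$, the class of $f$ with $\norm{\Delta f}_{\TV}\leq A$ and bounded mean already has $L^1$ entropy $O(\e^{-d/2})$. For $d=2n$ this is exactly $\e^{-n}$.

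Only the trace of positivity is needed, and this is the missing idea. The measure $(n+\Delta_\omega u)\omega^n=n\,\omega_u\wedge\omega^{n-1}\geq0$ has mass $nV$, so $\norm{\Delta_\omega u\,\omega^n}_{\TV}\leq2nV$, and \eqref{eq-mean-sup} bounds the mean.

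The paper proves the elliptic estimate as follows:
\begin{enumerate}
\item Write $u-\overline u=\cG\sigma$ with $\sigma=\Delta u$.
\item On nested dyadic partitions, replace $\sigma$ cellwise by atomic measures $P_j\sigma$ on the $3^d$ Lagrange nodes of each cell, preserving all moments through degree two.
\item Each $(P_{j+1}-P_j)\sigma$ is then a sum of cell measures with vanishing moments through degree two. Lemma~\ref{lem-green-cell} gives $L^1$ size $\lesssim2^{-2j}\norm{\sigma}_{\TV}$, while $\operatorname{rank}P_j\lesssim2^{dj}$.
\item Take $2^{-2k}\asymp\e$ and cover the level-$j$ block at accuracy $\frac{\e}{8}2^{-(k-j)}$. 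The total cost is $\sum_j2^{dk}2^{-d(k-j)}(1+k-j)=O(2^{dk})=O(\e^{-d/2})$, with no logarithm.
\end{enumerate}
Matching through degree two is what gives $h^2$. With only mass cancellation the cell estimate is $O(h)$, and with first moments it is $O(h^2\log(1/h))$. Your truncation step is harmless but unnecessary.

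\textbf{The alternative routes fail as stated.}
\begin{enumerate}
\item Bounded convex functions of $n$ real variables have $L^1$ entropy of order $\e^{-n/2}$, not $\e^{-n}$. Your own lower-bound step~4 observes this, and it is the source of the toric exponent.
\item Torus averaging discards angular information. A small multiple of $\mathrm{Re}\,z_1$ is pluriharmonic, so it has zero Laplacian mass, yet it differs from its torus average by order one.
\item In the Bergman route, $\frac1k\log\sum_j\abs{s_j}^2$ is determined by an $N_k\times N_k$ coefficient matrix. That is $\asymp k^{2n}$ real parameters, not $k^n$, so the naive count is $\e^{-2n}$ up to logarithms. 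Reducing to $O(N_k)$ parameters is precisely what the Gaussian compression and determinantal sampling of Sections~\ref{sec-Gaussian}--\ref{sec-coding} accomplish, and only for $d_1$ on entropy sublevels.
\item Theorem~\ref{thm-l1-main} does not assume that $[\omega]$ is integral, so global sections need not exist.
\item Dudley chaining by itself does not remove a logarithm. What removes it is a decomposition into low-rank blocks of geometrically decaying size, covered at geometrically weighted accuracies.
\end{enumerate}
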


Our principal result concerns the Mabuchi--Darvas distance $d_1$.  
For probability measures on a common space, define relative entropy by
\begin{equation*}
 \Ent(\mu\mid\nu)=
 \begin{cases}\displaystyle\int\log(d\mu/d\nu)d\mu,&\mu\ll\nu,\\
 +\infty,&\text{otherwise},\end{cases}
\end{equation*}
with $0\log0=0$. Let $\cE^1(X,\omega)$ be the full Monge--Amp\`ere mass class with finite Monge--Amp\`ere energy; its precise definition and the non-pluripolar measure $\MA(u)$ are recalled in Section~\ref{sec-preliminaries}. The probability-normalized Mabuchi--Darvas metric $d_1$ is the completion of the path-length metric for the norm $\int
\abs{\dot u}\MA(u)$ of the tangent $\dot{u}$. For smooth $u$, $\MA(u)=V^{-1}\omega_u^n$. Define
\begin{equation*}
 \cK_B^{\cH}=\{u\in\cH_\omega:\Ent(\MA(u)\mid\nu)\leq B\}.
\end{equation*}
We denote by $\cK_B$ the corresponding class of all $u\in\cE^1$ with $\sup_Xu=0$, without a smoothness assumption.

\begin{thm}
\label{thm-d1-main}
Assume $\omega=c_1(L,h_0)$ for a positive Hermitian line bundle on $X$. There is a geometric constant $C_X>0$ such that, for every $B\geq0$,
\begin{equation}
 H_\e(\cK_B,d_1)
 \leq C_X(1+B)^{n+1}\e^{-n},\qquad 0<\e<\tfrac12.
 \label{eq-d1-upper-general}
\end{equation}
There are geometric constants $c_X,\e_X>0$ such that for every $B>0$,
\begin{equation}
 H_\e(\cK_B,d_1)
 \geq c_X\min\{1,B^{n/2}\}\e^{-n}
 \label{eq-d1-lower-main}
\end{equation}
whenever
\begin{equation}
 0<\e<\e_X\min\{1,\sqrt B\}.
 \label{eq-d1-lower-range}
\end{equation}
The same upper and lower estimates hold with $\cK_B^{\cH}$ in place of $\cK_B$, with internal centers in either case. In particular, for each fixed $B>0$ both entropies have order $\e^{-n}$. The lower bound does not require polarization.
\end{thm}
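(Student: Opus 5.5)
The plan is to prove the upper and lower bounds separately, reducing both to quantitative $L^1$-type control. Theorem~\ref{thm-l1-main} supplies the entropy count for $\PSH_0(X,\omega)$ in $d_{L^1}$.

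\textbf{Upper bound.} The key tool is a comparison of $d_1$ with $L^1$ distances on entropy sublevels. Recall the Darvas formula
\begin{equation*}
 d_1(u,v)\asymp \int\abs{u-v}\MA(u)+\int\abs{u-v}\MA(v),
\end{equation*}
valid up to dimensional constants. On $\cK_B$ I bound $\int\abs{u-v}\MA(u)$ using the entropy (Young/Legendre) inequality
\begin{equation*}
 \int f\,d\mu\leq \lambda^{-1}\bigl(\Ent(\mu\mid\nu)+\log\textstyle\int e^{\lambda f}d\nu\bigr),
\end{equation*}
applied with $f=\abs{u-v}$. By uniform Skoda integrability, $\int e^{\alpha\abs{u}}d\nu\leq C$ on $\PSH_0$. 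Together with the elementary bound $e^{\lambda f}\leq 1+\lambda f\,e^{\lambda f}$ and H\"older, this gives $\int e^{\lambda f}d\nu\leq 1+C\lambda\,d_{L^1}(u,v)^{1/2}$ for $\lambda\leq\alpha/4$. To gain the factor properly, I instead truncate at level $M$. The part where $\abs{u-v}>M$ has $\MA(u)$-mass at most $(B+\log 2)/\log(1/\nu(\{\abs{u}>M/2\}))\lesssim (1+B)/M$, because $\nu(\abs u>t)\leq Ce^{-\alpha t}$. This gives
\begin{equation*}
 \int\abs{u-v}\MA(u)\lesssim \frac{1+B}{\log\bigl(1/d_{L^1}(u,v)\bigr)}.
\end{equation*}
That logarithmic modulus is too weak to produce $\e^{-n}$. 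I therefore expect the \emph{main obstacle} to be a Hölder-type comparison $\int\abs{u-v}\MA(u)\leq C(1+B)\,\delta$ on sets of size $\delta$ in a finer covering metric.

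Here the polarization should be used. I approximate each $u\in\cK_B$ by Bergman/Fubini–Study potentials $u_k=k^{-1}\log\sum\abs{s_i}^2_{h_0^k}$ at level $k\sim(1+B)/\e$. Quantitative Bergman kernel estimates (Demailly regularization together with energy/entropy control) should give $d_1(u,u_k)\leq\e/3$ once $k\gtrsim (1+B)\e^{-1}$. The image then lies in a bounded subset of a finite-dimensional space of Hermitian matrices of dimension $N_k^2$, but this is too large. So I use the lattice of $(1+B)/\e$-grid values on a $\e/(1+B)$-net of $X$ instead. Potentials are Lipschitz at scale $1/k$ after regularization, so they are determined up to $\e$ by values on about $k^{n}$ points with $O(\log)$ precision. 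Counting only gives $k^{n}\log k$, so the extra log has to be removed by the chaining/coding trick of Theorem~\ref{thm-l1-main}'s proof: encode increments between neighboring net points, each taking $O(1)$ values. This yields $(1+B)^{n}\e^{-n}$ codes, and the extra factor $(1+B)$ comes from the sup range. Finally, I convert external centers to internal centers at the cost $\e\to2\e$. Density of $\cK_B^{\cH}$ in $\cK_B$ in $d_1$, via entropy-preserving smoothing as in BBEGZ, handles the smooth version.

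\textbf{Lower bound.} I build a packing. In $\sim\e^{-n}$ disjoint coordinate balls of radius $r\sim\e/\sqrt{\min(1,B)}$-adjusted, I place bumps $\pm\eta\,\chi((z-z_j)/r)$ with $\eta\sim r^2$, so that $\omega+\ddc$ stays positive. The densities are then $1+O(1)$, which keeps the entropy at $\lesssim$ the mass-weighted square of the perturbation, matching the $\sqrt B$ scaling. Varshamov–Gilbert gives $e^{c m}$ sign patterns that pairwise differ in $\geq m/4$ bumps. Each differing bump contributes $\gtrsim\eta r^{2n}$ to $d_1$ via $d_1\gtrsim\int\abs{u-v}\omega^n$ when the densities are bounded, for a total $\gtrsim\e$. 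The scaling parameters are chosen to give $m\sim\min(1,B^{n/2})\e^{-n}$. None of this uses a line bundle.
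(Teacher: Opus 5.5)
Your lower bound is the paper's argument (Proposition~\ref{prop-d1-lower}): disjoint bumps of Hessian amplitude $a\asymp\min\{1,\sqrt B\}$, entropy $O(a^2)\leq B$, a constant-weight Gilbert--Varshamov code, and $I_1\leq 2^{2n+6}d_1$ with densities bounded below. The scales need fixing, though. The bump radius must be $r\asymp(\e/a)^{1/2}$, not $\e/\sqrt{\min\{1,B\}}$; this gives $r^{-2n}\asymp(a/\e)^n$ balls and separation $\asymp ar^2>2\e$. The density perturbation is $O(a)$, not $O(1)$. With $\pm$ bumps you must also subtract the common supremum.

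The upper bound has a genuine gap at the encoding step. Grant $d_1(u,u_k)\leq\e/3$ with $k\asymp(1+B)/\e$. You still need a code of log-size $O((1+B)k^n)$ for the approximants, and recording values on a net does not give one.
\begin{itemize}
\item Even if the approximants were uniformly Lipschitz, sup-norm accuracy $\e\asymp(1+B)/k$ needs a net of spacing $\asymp 1/k$. In real dimension $2n$ that net has $k^{2n}$ points, not $k^n$, so you get exponent $2n$.
\item A net with $k^n$ points has spacing $k^{-1/2}$. Across such a gap even a fixed smooth potential changes by $\asymp k^{-1/2}\gg\e$, so the increments do not take $O(1)$ values.
\item Values at $N_k$ points cannot by themselves determine $u_k=\frac1m\log(s^*H^{-1}s/N_k)$, which depends on the $N_k^2$ real parameters of $H$. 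The point values fix only the diagonal of $S^*H^{-1}S$, where $S$ is the evaluation matrix.
\item There is no uniform Lipschitz bound anyway. The budget $B$ allows Monge--Amp\`ere mass $t$ in a ball of radius $r$ at entropy cost about $t\log(t/r^{2n})$, so members of $\cK_B$ can be arbitrarily steep.
\item $d_1\asymp I_1$ weights errors by $\MA(u)$ and $\MA(v)$, so a net chosen independently of $u$ cannot control it.
\item The multiscale argument behind Theorem~\ref{thm-l1-main} does not transfer. It controls only the unweighted $L^1$ norm, through the total variation of the Laplacian.
\end{itemize}

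The missing idea is an information count adapted to $\MA(u)$. The paper proceeds as follows.
\begin{itemize}
\item It replaces $H_u$ by $n+1$ Gaussian sections, at expected $d_1$-cost $O(1/k)$.
\item It draws $N_k$ points from a determinantal law built from a balanced Hilbert form, so that every one-point marginal is exactly $\MA(u)$.
\item It records the integers $\lceil m(\beta_ku-v)(x_i)\rceil$ relative to a coarse center $v$. Their expected total is $O(N_k[1+B+kd_1(u,v)])$ by the $I_1$ comparison.
\item It pays for the Gaussian coefficients through a relative entropy against a Gaussian whose precision matrix interpolates these values. The log-determinant term there is exactly what the determinantal sampling controls.
\item A Fano-type lemma converts these bounds into covers. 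Dyadic refinement $k_j=2^jk_0$ keeps $k\,d_1(u,v)=O(1+B)$ at every level. This coarse-to-fine geometric sum, not increment chaining, removes the logarithm and yields $(1+B)^{n+1}\e^{-n}$.
\end{itemize}

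Your approximation step is also not automatic. Ohsawa--Takegoshi regularization gives only $\beta_ku\geq u-(n\log k+C_X)/m$, which would cost an extra factor $\log^n(1/\e)$. The $O((1+B)/k)$ rate of Proposition~\ref{prop-full-approximation} needs the twisted Hilbert form, the quantized-energy comparisons, and entropy duality applied to the density $b_u$.
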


For $B\geq1$ the upper bound can be written $C_XB^{n+1}\e^{-n}$. We do not claim optimal dependence on $B$: at $B=0$ the normalized class is a singleton.

Finally, suppose that $X$ and the reference metric are toric. Let $P\subset\R^n$ be the Delzant moment polytope, $\lambda_P$ its normalized Lebesgue measure, and $u_0$ the reference symplectic potential. In this part we fix additive constants by requiring the symplectic potential to have zero $\lambda_P$-mean, rather than by supremum normalization on $X$. Write $\cK_B^{\rm tor}$ for the resulting torus-invariant finite-energy potentials with relative entropy at most $B$ with respect to $V^{-1}\omega^n$. 

\begin{thm}
\label{thm-toric-main}
There are constants $c, C, \e_0>0$, depending only on the reference toric metric, such that for every $B>0$,
\begin{equation}
 H_\e(\cK_B^{\rm tor},d_1)
 \leq C\left(\frac{1+B}{\e}\right)^{n/2},\qquad\e>0,
 \label{eq-toric-upper-intro}
\end{equation}
and
\begin{equation}
 H_\e(\cK_B^{\rm tor},d_1)
 \geq c\left(\frac{\min\{1,\sqrt B\}}{\e}\right)^{n/2}
 \label{eq-toric-lower-intro}
\end{equation}
for any $0<\e\leq \e_0\min\{1,\sqrt B\}$. Equivalently, these estimates hold in $L^1(\lambda_P)$ for the convex class
\begin{equation*}
 \left\{u\in L^1(P): u\text{ convex},\ \int_Pu\,d\lambda_P=0,
 \ \Ent(((\nabla u_0)^{-1}\circ\nabla u)_*\lambda_P\mid\lambda_P)\leq B\right\}.
\end{equation*}
Here gradients are defined almost everywhere in the interior of $P$. Thus the sharp exponent under toric symmetry is $n/2$.
\end{thm}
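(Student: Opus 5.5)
The plan is to reduce everything to convex analysis on the moment polytope $P$, and then import the known entropy theory for bounded convex functions on polytopes (Bronshtein, Guntuboyina--Sen, Gao--Wellner).

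\emph{Step 1: reduction to $P$.} For a torus-invariant $\varphi\in\cE^1$, the Legendre transform gives a convex symplectic potential $u_\varphi$ on $P$. I will use three facts.
\begin{itemize}
\item The pushforward of $\MA(\varphi)$ under the moment map is always $\lambda_P$.
\item Guedj's toric formula identifies the probability-normalized distance as
\begin{equation*}
d_1(\varphi,\psi)=\int_P\abs{u_\varphi-u_\psi}\,d\lambda_P .
\end{equation*}
With the zero-mean normalization this is an isometry onto a class of convex $L^1$ functions.
\item Under this transform, $\MA(\varphi)$ corresponds, in the reference coordinates, to the pushforward $((\nabla u_0)^{-1}\circ\nabla u)_*\lambda_P$, and $V^{-1}\omega^n$ corresponds to $\lambda_P$.
\end{itemize}
Because relative entropy is invariant under a measurable bijection applied to both measures, the entropy constraint becomes exactly the one written in the statement. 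This proves the claimed equivalence and reduces both bounds to $L^1(\lambda_P)$ entropy of that convex class.

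\emph{Step 2: upper bound.} The known upper bounds need some size control on the convex functions, so I first extract it from the entropy budget.
\begin{itemize}
\item The pluripotential estimates behind the compactness of entropy sublevels (the coercivity input from \cite{BBEGZ19}, i.e. $d_1(\varphi,0)\lesssim 1+B$) give $\norm{u-u_0}_{L^1(\lambda_P)}\lesssim 1+B$.
\item Since $u_0$ is bounded on $P$ (Guillemin boundary behaviour), a convex function with zero mean and $L^1$-bounded deviation from $u_0$ is bounded on any compact subpolytope $P_\delta$. There the bound is of order $(1+B)\delta^{-n}$, by convexity and sub-mean-value estimates.
\item Near $\partial P$, I will use the entropy bound itself: through an Orlicz/Young inequality for $\abs{\nabla u}$ it controls the $L^1$ mass of $u-u_0$ in the layer $P\setminus P_\delta$. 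This mass should be $o(\e)$ once $\delta$ is a suitable power of $\e/(1+B)$.
\item On $P_\delta$, decompose into simplices and apply the Bronshtein/Guntuboyina--Sen bound $H_\e\lesssim (\Gamma/\e)^{n/2}$ for uniformly bounded convex functions in $L^1$. This handles the polytope boundary without Lipschitz assumptions.
\end{itemize}
Tracking the scaling $\Gamma\sim 1+B$ should give $C((1+B)/\e)^{n/2}$.

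\emph{Step 3: lower bound.} This is a Bronshtein-type packing construction around $u_0$.
\begin{itemize}
\item Fix a small ball in the interior of $P$, where $u_0$ is smooth and uniformly convex, and tile it into $m\sim\delta^{-n}$ cubes of side $\delta$.
\item For $\sigma\in\{0,1\}^m$, set $u_\sigma=u_0+t\sum_i\sigma_i\delta^2\beta((x-x_i)/\delta)$, minus the mean. Here $\beta$ is a fixed smooth bump. For $t$ small, $\nabla^2u_\sigma$ is a uniformly small perturbation of $\nabla^2u_0$, so each $u_\sigma$ is convex.
\item Entropy: the map $(\nabla u_0)^{-1}\circ\nabla u_\sigma$ is $C^1$-close to the identity. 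Its Jacobian determinant is $1+O(t)$, with a first-order term that is a divergence and integrates to zero, so the relative entropy is $O(t^2)$. Choosing $t=c\min\{1,\sqrt B\}$ makes the entropy at most $B$.
\item Separation: pairs differing in a single cube are at $L^1$ distance $\asymp t\delta^{2}\delta^{n}$, so for a Varshamov--Gilbert subfamily of size $e^{cm}$ the mutual distances are $\gtrsim t\delta^2$. Set $\delta=\sqrt{\e/(c't)}$; the requirement that $\delta$ be small explains the range $\e\le\e_0\min\{1,\sqrt B\}$.
\end{itemize}
This gives $H_\e\gtrsim\delta^{-n}=(t/\e)^{n/2}$, which is the bound \eqref{eq-toric-lower-intro}.

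\emph{Main obstacle.} I expect the hard step to be the boundary analysis in the upper bound. Finite-energy symplectic potentials need not be bounded or Lipschitz near $\partial P$. The entropy constraint has to be converted into a quantitative estimate on how much $L^1$ mass of $u-u_0$ sits in the layer $P\setminus P_\delta$, sharp enough not to spoil the exponent $n/2$. My first approach is to use Guillemin's form $u_0=\frac12\sum_j\ell_j\log\ell_j+{}$smooth. I would combine it with the entropy bound, read as an $L\log L$ bound for the density of $\nabla u_*\lambda_P$ against $\nabla u_{0*}\lambda_P$, to obtain $L^1$ tail estimates for $\abs{\nabla u}$ near each facet. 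I would then integrate these estimates along inward normals.
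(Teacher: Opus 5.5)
Your reduction (Step~1) and your bump packing (Step~3) are essentially the paper's arguments. There are three small differences in the paper. It obtains the $d_1$ formula as the $p=1$ analogue of Guedj's $d_2$ formula, using Guan's affine geodesics in Legendre coordinates, Darvas's minimizing theorem, and finite-energy approximation. It proves the entropy identity by torus-invariant disintegration over $m_0$, which is a quotient map rather than a bijection. It uses constant-weight codes so that the mean-correcting constant does not depend on the word.

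The genuine gap is in Step~2. Your sup bound on $P_\delta$ is $\Gamma_\delta\sim(1+B)\delta^{-n}$, and $\delta$ must tend to zero with $\e$ for the layer $P\setminus P_\delta$ to carry $L^1$ mass $o(\e)$. Suppose your layer estimate has the form $(1+B)\delta^{\gamma}$. Then the Bronshtein/Guntuboyina--Sen bound on $P_\delta$ yields only
\begin{equation*}
 H_\e\lesssim\left(\frac{(1+B)\delta^{-n}}{\e}\right)^{n/2}
 =\left(\frac{1+B}{\e}\right)^{\frac n2\left(1+\frac n\gamma\right)},
 \qquad \delta=\left(\frac{\e}{1+B}\right)^{1/\gamma},
\end{equation*}
so the exponent $n/2$ is lost. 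Your claim that one can track $\Gamma\sim1+B$ is inconsistent with your own interior estimate. No sharper interior estimate rescues a single cutoff when $n\geq2$. Finite transport entropy does not prevent $u$ from being $+\infty$ at a vertex: on the standard two-simplex, $u_0+(y_1+y_2)^{-\alpha}$ minus its mean, with $0<\alpha<1$, has finite entropy. Hence $\sup_{P_\delta}\abs{u}$ over the class necessarily blows up as $\delta\to0$.

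The missing idea is to use the entropy/Young inequality globally and then invoke an entropy theorem for convex functions under an \emph{integral} bound, not a uniform one.
\begin{enumerate}
\item Since $\nabla u=\nabla u_0\circ T_u$ almost everywhere, Guillemin's form of $u_0$ gives $\int_Pe^{a_0\abs{\nabla u_0}}d\lambda_P<\infty$. The duality $\int g\,d\sigma\leq\Ent(\sigma\mid\lambda_P)+\log\int e^g\,d\lambda_P$ then yields $\int_P\abs{\nabla u}\,d\lambda_P\leq C(1+B)$.
\item A noncritical Poincar\'e--Sobolev inequality on the convex polytope places the zero-mean class in an $L^r(\lambda_P)$-ball of radius $C(1+B)$, with $r=1+1/(2n)>1$.
\item Gao--Wellner's theorem \cite[Theorem~1.1(ii)]{GW17} says that convex functions in an $L^r$-ball of radius $R$ on a polytope, with $r>1$, have $L^1$ metric entropy at most $C_P(R/\e)^{n/2}$. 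Its proof is exactly what absorbs the boundary blow-up at the sharp exponent. The condition $r>1$ is essential, since an $L^1$ bound alone does not even give precompactness: convex functions can concentrate at a vertex.
\item That theorem concerns real-valued convex functions on the closed polytope, so you also need a closure step. A convex $u\in L^r(P)$ that may be $+\infty$ on $\partial P$ is the $L^r$-limit of the finite convex functions $(1-\delta)^{n/r}u((1-\delta)y+\delta y_*)$, whose $L^r$ norms do not exceed $\norm{u}_{L^r}$.
\end{enumerate}
Covering this closure and recentering inside the class then gives $C((1+B)/\e)^{n/2}$.
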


\subsection*{Background and structure of the proofs}
For Theorem~\ref{thm-l1-main}, positivity bounds the distributional Laplacian as a signed measure. 
Following the multiscale approximation methods of Birman--Solomyak \cite{BirSol67} and DeVore \cite{Dev98}, we approximate the measure-valued Laplacian by discrete measures that preserve local moments. 
The decisive analytic estimate is Lemma~\ref{lem-green-cell}: matching coordinate moments through degree two leaves a Green potential of $L^1$ size $O(h^2)$. Disjoint local bumps and a constant-weight Gilbert--Varshamov code \cite{Gil52,Var57} give the reverse inequality. 
For the $d_1$ lower bound, we use the same local bumps with amplitude $a$. The relative entropy is quadratic to leading order in $a$, with second derivative at $a=0$ given by the Fisher information of the resulting one-parameter family of Monge--Amp\`ere measures. Their pairwise $d_1$-separation is bounded below by a quantity proportional to $a$.

The upper bound in Theorem~\ref{thm-d1-main} 
is the core of this paper and consists of three parts. The first part starts from the quantized energy comparisons of Berman--Freixas i Montplet \cite[equation~(3.4)]{BF14}, Zhang \cite[Proposition~4.2 and Remark~4.3]{Zha24}, and Berman--Boucksom--Guedj--Zeriahi \cite[Lemmas~7.7 and~7.8]{BBGZ13}. The log-determinant/energy correspondence itself belongs to Berman--Boucksom's volume-of-balls theory \cite[Theorem~A]{BB10}. 

Qualitative Bergman approximation in the finite-energy $d_p$ topology
was established by Darvas--Lu--Rubinstein
\cite[Theorem~1.2(i)]{DLR20}. Here we track the errors in the reference-tangent arguments to obtain
a uniform $O((1+B)/k)$ approximation on entropy sublevels by Fubini--Study potentials of degree $m=k+\ell$.
Our approximation uses a fixed auxiliary twist, so its operator
differs from the one in \cite{DLR20}.


We next replace the full Bergman sum by the squared norms of $n+1$ 
independent Gaussian linear combinations of an orthonormal basis,
a construction related to Shiffman--Zelditch's study of random sections
\cite{SZ99}.
This reduces the number of parameters from order $N_k^2$ to order $N_k$,
which is crucial for the subsequent information bound.
Two absolute Gaussian moment calculations, one for each endpoint
Monge--Amp\`ere measure, show that the replacement costs only
$O(k^{-1})$ in expected $d_1$ distance.
Thus the $O((1+B)/k)$ approximation rate survives this reduction.

The covering argument is not a consequence of the section-space dimension $N_k\asymp k^n$ alone. 
Its input is a coarse center $v$ and a potential $u$ close to $v$ in $d_1$. 
In the third part we require sampling points that simultaneously 
control two costs: the difference of  potentials and the 
logarithmic determinant arising in Gaussian interpolation. 
At this stage, inspired by the works 
\cite{Don09}, \cite{BBEGZ19}, and \cite{Ber14,Berm18}, 
we use a fixed-measure balanced Hilbert form to construct a determinantal sample of points which has marginal distribution exactly $\MA(u)$; this uses the projection-process framework of \cite{HKPV06}. 
At each sample point we store an integer rounding of $m(\beta_ku-v)$, where $\beta_ku$ is the Bergman approximation defined in Section~\ref{sec-approximation}. The endpoint estimate of Darvas \cite[Theorem~5.5]{Dar15} controls the expected sum of the absolute integer values. Interpolation from the points and these integers defines a Gaussian mixture depending on $v$ but not on $u$. A relative-entropy comparison with this reference distribution gives a local covering bound. 

For Theorem~\ref{thm-toric-main}, the classical Legendre correspondence \cite[Theorem~2.8]{Abr00} and toric geodesic formula \cite[Section~4, Theorem~3]{Guan99}, together with Darvas's minimizing-geodesic theorem \cite[Theorem~3.5]{Dar15}, give an exact $d_1$--$L^1$ isometry. Relative entropy controls a Sobolev norm on the polytope via the reference transport map. We need to verify the boundary-closure step before applying the convex-function entropy estimate \cite[Theorem~1.1(ii)]{GW17}. The boundary example in Section~\ref{sec-toric} explains why finite entropy cannot simply be replaced by boundedness on the closed polytope.


After the preliminaries, Sections~\ref{sec-l1}--\ref{sec-lower}
prove Theorem~\ref{thm-l1-main} and the lower bound in
Theorem~\ref{thm-d1-main}.
Section~\ref{sec-approximation} establishes the uniform Bergman
approximation estimates.
Sections~\ref{sec-Gaussian}, \ref{sec-sampling},
and~\ref{sec-coding} construct and count local covers.
Section~\ref{sec-toric} treats toric symmetry.
The final section discusses coding interpretations, possible
statistical and numerical applications, and finite-dictionary
approximation of plurisubharmonic potentials.

\medskip
\noindent{\bfseries Acknowledgments.}
The author is supported by JSPS Grant-in-Aid for Scientific Research $(\mathrm{C})$, No.\ 26K06811. 
ChatGPT-6 Astra was used extensively to explore proof candidates and assist with checking calculations, references, and exposition. 
The author directed the research, verified all mathematical arguments, supplied the necessary details, and wrote the final exposition. 
The author takes sole responsibility for the contents of this paper.

\section{Pluripotential and metric preliminaries}
\label{sec-preliminaries}

\subsection{\texorpdfstring{$\omega$}{omega}-plurisubharmonic functions of finite energy}
\label{sec-energy}

In this subsection, we briefly review the necessary facts about the 
finite-energy class of $\omega$-plurisubharmonic functions. 
For a more systematic treatment, see the textbook \cite{GZ17} and the survey \cite{Dar19}. 
An upper semicontinuous function $u:X\to\R\cup\{-\infty\}$ is $\omega$-plurisubharmonic if it is locally the sum of a plurisubharmonic function and a smooth function and if the current $\omega+\ddc u$ is positive.  The set of such functions is denoted by $\PSH(X,\omega)$.

For $u\in\PSH(X,\omega)$, let $u_j:=\max\{u,-j\}$.  The non-pluripolar Monge--Amp\`ere measure is the increasing weak limit
\begin{equation}
\MA(u):=V^{-1}\lim_{j\to\infty}
\mathds{1}_{\{u>-j\}}(\omega+\ddc u_j)^n.
\label{eq-nonpluripolar-ma}
\end{equation}
This construction puts no mass on pluripolar sets and is independent of the truncation procedure; see \cite[Definition~1.1 and Theorem~1.16]{BEGZ10}.  We write $\cE(X,\omega)$ for the full-mass class, characterized by $\MA(u)(X)=1$. 

For bounded $u\in\PSH(X,\omega)$, the Monge--Amp\`ere energy is
\begin{equation}
E(u):=\frac{1}{(n+1)V}\sum_{j=0}^n
\int_Xu\,\omega_u^j\wedge\omega^{n-j}.
\label{eq-e-definition}
\end{equation}
For an arbitrary $\omega$-plurisubharmonic function it is extended by decreasing truncation.  The finite-energy class is
\begin{equation*}
\cE^1(X,\omega):=\{u\in\cE(X,\omega):E(u)>-\infty\}.
\end{equation*}
The class $\cE^1$ is the metric completion of the space of smooth K\"ahler potentials for the path-length metric associated with the normalized Mabuchi Finsler norm
\begin{equation*}
\norm{\xi}_{1,u}:=V^{-1}\int_X\abs{\xi}\,\omega_u^n;
\end{equation*}
see \cite[Theorems~3.5 and~4.17]{Dar15}.  This metric is denoted by $d_1$.

The finite-energy entropy sublevel used below is
\begin{equation}
\cK_B:=\{u\in\cE^1(X,\omega):\sup_Xu=0,\
\Ent(\MA(u)\mid\nu)\leq B\}.
\label{eq-finite-energy-class}
\end{equation}
From this point onward, $\MA$ has the non-pluripolar meaning in \eqref{eq-nonpluripolar-ma}, and we also write $\mu_u=\MA(u)$ for its probability measure when $u\in\cE^1$.

Connectedness and the assumption $n\geq1$ are necessary for the stated normalization.  If $X=X_1\sqcup X_2$ and only $\sup_Xu=0$ is imposed, the functions equal to $0$ on $X_1$ and $-T$ on $X_2$ form an unbounded ray while leaving the Monge--Amp\`ere measure unchanged on each component.  If $n=0$, every normalized class is a singleton.  On a disconnected manifold the conclusions are restored by normalizing separately on every component.

\begin{thm}[{\cite[Lemma~3.45 and Theorems~3.32, 3.46]{Dar19}}]
\label{thm-darvas-inputs}
For every $u\in\PSH(X,\omega)$,
\begin{equation}
\int_Xu\,d\nu\leq\sup_Xu\leq\int_Xu\,d\nu+C_X.
\label{eq-mean-sup}
\end{equation}
If $u,v\in\cE^1(X,\omega)$ and
\begin{equation*}
I_1(u,v):=\int_X\abs{u-v}\,[\MA(u)+\MA(v)],
\end{equation*}
then
\begin{equation}
d_1(u,v)\leq I_1(u,v)\leq 2^{2n+6}d_1(u,v).
\label{eq-d1-i1}
\end{equation}
Moreover, $u_k\to u$ in $d_1$ if and only if $u_k\to u$ in the background $L^1$ topology and $E(u_k)\to E(u)$.  Such convergence implies weak convergence $\MA(u_k)\to\MA(u)$ and
\begin{equation*}
\int_X\abs{u_k-u}\,\MA(v)\longrightarrow0
\end{equation*}
for every $v\in\cE^1(X,\omega)$.
\end{thm}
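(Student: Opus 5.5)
This statement collects known results from Darvas's survey \cite{Dar19}, so the plan is to reassemble the three ingredients from standard pluripotential facts rather than to prove anything new.

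\emph{Mean--sup inequality.} The first inequality in \eqref{eq-mean-sup} holds because $u\leq\sup_Xu$ pointwise and $\nu$ is a probability measure. For the second, translate so that $\sup_Xu=0$. It then suffices to prove $\int_X u\,d\nu\geq -C_X$ uniformly on $\PSH_0(X,\omega)$. I would derive this from compactness of $\PSH_0(X,\omega)$ in $L^1(\nu)$.
\begin{itemize}
\item Suppose $u_j\in\PSH_0$ with $\int u_j\,d\nu\to-\infty$.
\item By Hartogs' lemma, either $u_j\to-\infty$ uniformly or a subsequence converges in $L^1$.
\item The first alternative contradicts $\sup u_j=0$.
\end{itemize}
Alternatively, one can cover $X$ by finitely many coordinate balls and use the local sub-mean value property for $u+\rho$, with $\rho$ a local potential of $\omega$. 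A chain argument through overlapping balls, using connectedness, then propagates the bound from a point near the maximum.

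\emph{Comparison of $d_1$ and $I_1$.} I would first prove \eqref{eq-d1-i1} for smooth potentials and then pass to $\cE^1$ by decreasing approximation, using continuity of both sides along decreasing sequences.

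For the upper bound $d_1\leq I_1$, I would use Darvas's formula $d_1(u,v)=E(u)+E(v)-2E(P(u,v))$, where $P(u,v)$ is the rooftop envelope. Two ingredients are needed.
\begin{itemize}
\item The first is the monotonicity estimates $\int(u-w)\MA(u)\leq E(u)-E(w)\leq\int(u-w)\MA(w)$ for $w\leq u$.
\item The second is that $\MA(P(u,v))$ is supported on $\{P=u\}\cup\{P=v\}$, where it is dominated by $\MA(u)$, respectively $\MA(v)$.
\end{itemize}
Writing $w=P(u,v)$, the upper estimate gives $d_1(u,v)\leq\int(u-w)\MA(w)+\int(v-w)\MA(w)$. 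On $\{w=u\}$ the first integrand vanishes and the second equals $v-u$, bounded by $\abs{u-v}$ against $\MA(u)$; the set $\{w=v\}$ is symmetric. This yields $d_1\leq I_1$.

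For the reverse inequality, the plan is as follows.
\begin{itemize}
\item Use the Pythagorean-type comparison $d_1(u,v)\geq d_1(u,P(u,v))$, up to a constant.
\item Bound $\int\abs{u-v}\MA(u)$ by $\int(u-w)\MA(u)+\int(v-w)\MA(u)$.
\item Control cross terms $\int(u-w)\MA(v)$ by the ``quasi-triangle'' inequality for $I_1$. This uses mixed Monge--Amp\`ere estimates of the form $\int(u-w)\MA(v)\leq C_n\bigl(\int(u-w)\MA(u)+\int(u-w)\MA(w)\bigr)$, coming from $\MA(\tfrac{u+w}{2})\geq2^{-n}\MA(u)$.
\end{itemize}
Each application of this doubling costs a factor $2^n$ or $2$. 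Tracking the constants gives a bound of the form $2^{2n+6}$; the precise constant is bookkeeping and I would not optimize it.

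\emph{Convergence characterization.} If $d_1(u_k,u)\to0$, then $E(u_k)\to E(u)$ because $\abs{E(u)-E(v)}\leq d_1(u,v)$. This Lipschitz bound follows from the monotonicity estimates along the geodesic. Next, $\int\abs{u_k-u}\,d\nu\leq C\,d_1(u_k,u)$, by comparing $\nu=\MA(0)$ via $I_1$ and the triangle inequality through $0$.

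Conversely, suppose $u_k\to u$ in $L^1$ and the energies converge. Set $\tilde u_k=(\sup_{j\geq k}u_j)^*$, which decreases to $u$. Upper semicontinuity of $E$, together with monotonicity, gives $E(\tilde u_k)\to E(u)$ and $E(P(u_k,u))\to E(u)$; the latter is obtained by considering $P(u_k,\tilde u_k)$-type envelopes, which is the standard argument. The formula for $d_1$ via the rooftop envelope then gives $d_1\to0$.

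Weak convergence of Monge--Amp\`ere measures follows from continuity of $\MA$ along energy-convergent sequences \cite{BBGZ13}. The final claim follows from $\int\abs{u_k-u}\MA(v)\leq I_1(u_k,u)+\int\abs{u_k-u}\MA(v)$-type comparisons, via the mixed estimate above applied with $v$ fixed.

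\emph{Main obstacle.} I expect the lower bound $I_1\leq C_n d_1$ to be the hardest step, because it requires the mixed-measure quasi-triangle estimates. Since this is a cited result, I would ultimately defer to \cite{Dar19} for it and verify only that the normalization of $\MA$ by $V^{-1}$ matches.
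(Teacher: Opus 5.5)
The paper does not prove this statement. It quotes \cite{Dar19} and only checks that $2^{2n+6}$ is Darvas's constant once all Monge--Amp\`ere measures are divided by $V$. Your mean--sup argument and your rooftop proof of $d_1\le I_1$ are sound. (Section~\ref{sec-approximation} of the paper later gets $d_1\le I_1$ more cheaply, by passing through $\max(u,v)$ and using concavity of $E$ at each endpoint, so no contact-set bound on $\MA(P(u,v))$ is needed.)

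The step you flag as hardest, however, rests on a false inequality. The bound $\int(u-w)\MA(v)\le C_n\bigl(\int(u-w)\MA(u)+\int(u-w)\MA(w)\bigr)$ fails in general. Its right side ignores $v$, while $\MA(v)$ may concentrate where $u-w$ is large. The doubling $\MA(\frac{u+w}{2})\ge2^{-n}\MA(u)$ never relates a third measure to these two. Even with $w=P(u,v)$, the cross term $\int(v-w)\MA(u)$ is not controlled by $E(v)-E(w)$ alone; it needs both halves of the Pythagorean decomposition.

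The repair is to double at the midpoint of the two endpoints. Set $w=P(u,v)$ and $m=\frac{u+v}{2}$. Then $\abs{u-v}\le(u-w)+(v-w)=2(m-w)$ and $\MA(u)\le2^n\MA(m)$, so concavity of $E$ gives
\[
\int_X\abs{u-v}\,\MA(u)\le2^{n+1}\int_X(m-w)\,\MA(m)
\le2^{n+1}\bigl(E(m)-E(w)\bigr)
\le2^{n}\int_X\bigl[(u-w)+(v-w)\bigr]\MA(w).
\]
For ordered $w\le u$, the same doubling at $\frac{u+w}{2}$ gives $\int_X(u-w)\MA(w)\le2^{n+1}(E(u)-E(w))$, and likewise with $v$. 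The formula $d_1(u,v)=E(u)+E(v)-2E(w)$ then yields $I_1\le2^{2n+2}d_1$, which lies inside \eqref{eq-d1-i1}.

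The convergence part has a genuine gap. Your bound $\int_X\abs{u_k-u}\,d\nu\le C\,d_1(u_k,u)$ is false. $I_1$ weighs $\abs{u-v}$ only against the two endpoint measures, and routing through $0$ controls $\int\abs{u_k}\,d\nu$, not $\int\abs{u_k-u}\,d\nu$. For a concrete counterexample, work on $\mathbb P^1$ with $P=[0,1]$ and take the symplectic potentials $u_0+\abs{y-\frac12}$ and $u_0+\max\{\abs{y-\frac12},\delta\}$. By Proposition~\ref{prop-toric-d1-isometry} they are at $d_1$-distance of order $\delta^2$. For small $\delta$, however, the corresponding K\"ahler potentials differ by at least $\delta/4$ on a fixed open set of positive $\nu$-measure. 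So no linear bound holds, even with one endpoint fixed.

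Your last step is circular as well. It bounds $\int\abs{u_k-u}\MA(v)$ by an expression containing that same integral, and then appeals to the false mixed estimate. Passing from $d_1$ to integrals against a third measure $\MA(v)$, including $\nu=\MA(0)$, needs a genuinely nonlinear input. Two standard options are:
\begin{itemize}
\item an energy-dependent H\"older estimate obtained by iterated integration by parts, as in \cite{BBGZ13};
\item trapping a fast subsequence between an increasing and a decreasing sequence that both converge to $u$, then applying monotone convergence against $\MA(v)$, which charges no pluripolar set.
\end{itemize}
This is what \cite[Theorem~3.46]{Dar19} supplies, and citing it, as the paper does, is the clean fix.

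Finally, in the converse direction, the appeal to ``$P(u_k,\tilde u_k)$-type envelopes'' is empty, since $P(u_k,\tilde u_k)=u_k$. Instead use $u_k,u\le\tilde u_k$ and the ordered formula: $d_1(u_k,u)\le2E(\tilde u_k)-E(u_k)-E(u)\to0$.
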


The powers of $2$ in \eqref{eq-d1-i1} are the constants in \cite[Theorem~3.32]{Dar19} after dividing all Monge--Amp\`ere measures by $V$.  We shall also use the immediate consequences
\begin{equation}
d_1(u,u+c)=\abs c,\qquad
\abs{E(u)-E(v)}\leq d_1(u,v).
\label{eq-d1-constants-e}
\end{equation}
Indeed, the path of constants gives the first upper bound, while differentiating \eqref{eq-e-definition} along a smooth path gives
\begin{equation*}
\frac{d}{dt}E(u_t)=\int_X\dot u_t\,\MA(u_t).
\end{equation*}
The path-length definition of $d_1$ yields the second inequality in \eqref{eq-d1-constants-e}; since $E(u+c)=E(u)+c$, it also yields the reverse bound in the first identity.  Approximation extends the argument to $\cE^1$. 

\begin{thm}[{\cite[Proposition~2.6]{BBEGZ19}}]
\label{thm-bbegz-homeomorphism}
After the normalization $E(u)=0$, the non-pluripolar Monge--Amp\`ere operator is a homeomorphism from $\cE^1(X,\omega)$ with its strong topology to the space of finite-energy probability measures with its strong topology.
\end{thm}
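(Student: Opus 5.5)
The plan is the variational approach of Berman--Boucksom--Guedj--Zeriahi, organized around the Legendre-type duality between $E$ and the energy of measures. For a probability measure $\mu$ put
\begin{equation*}
E^*(\mu):=\sup_{u\in\cE^1(X,\omega)}\Bigl(E(u)-\int_Xu\,d\mu\Bigr).
\end{equation*}
A measure has finite energy when $E^*(\mu)<\infty$. Its strong topology is the coarsest one making both $\mu\mapsto\int f\,d\mu$ (for continuous $f$) and $E^*$ continuous. On $\cE^1$ the strong topology is the $d_1$-topology, described in Theorem~\ref{thm-darvas-inputs}. I would prove injectivity, surjectivity, continuity and continuity of the inverse in that order.

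\emph{Bijectivity.} For injectivity I would invoke the uniqueness theorem in $\cE^1$ (Guedj--Zeriahi, Dinew): $\MA(u)=\MA(v)$ forces $u-v$ to be constant, and the normalization $E=0$ kills the constant because $E(u+c)=E(u)+c$. For surjectivity, fix $\mu$ with $E^*(\mu)<\infty$ and maximize $F_\mu(u)=E(u)-\int u\,d\mu$, which is translation invariant, over $\sup_Xu=0$.
\begin{enumerate}
\item Finite energy of $\mu$ gives the coercivity estimate $\int(-u)\,d\mu\leq \epsilon\,(-E(u))+C_\epsilon$ for $\sup_X u=0$. Hence maximizing sequences have bounded energy.
\item The sets $\{\sup_X u=0,\ E(u)\geq -C\}$ are $L^1$-compact. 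On them $E$ is upper semicontinuous and $u\mapsto\int u\,d\mu$ is continuous; the latter comes from the coercivity estimate together with Hartogs-type convergence. So a maximizer $\varphi$ exists.
\item To show $\MA(\varphi)=\mu$, perturb by a continuous $f$ and use the envelope $P(\varphi+tf)$. The BBGZ differentiability lemma gives $\frac{d}{dt}\big|_{t=0}E(P(\varphi+tf))=\int f\,\MA(\varphi)$. Since $P(\varphi+tf)\leq\varphi+tf$ and $\varphi$ maximizes, the function $t\mapsto E(P(\varphi+tf))-\int(\varphi+tf)\,d\mu$ is maximal at $t=0$. Its derivative there vanishes, which yields $\int f\,\MA(\varphi)=\int f\,d\mu$ for all $f$.
\end{enumerate}

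\emph{Continuity of $\MA$.} If $u_k\to u$ in $d_1$, Theorem~\ref{thm-darvas-inputs} gives $\MA(u_k)\to\MA(u)$ weakly. At a solution the supremum is attained, so $E^*(\MA(u))=E(u)-\int u\,\MA(u)$. The energies converge, and $\abs{\int u_k\,\MA(u_k)-\int u\,\MA(u)}$ is controlled by $I_1(u_k,u)$ plus $\int\abs{u_k-u}\,\MA(u)$. Both tend to $0$ by \eqref{eq-d1-i1} and the last assertion of Theorem~\ref{thm-darvas-inputs}.

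\emph{Continuity of the inverse.} Let $\mu_j\to\mu$ strongly and let $u_j,u$ be the normalized solutions. The bound $\sup_j E^*(\mu_j)<\infty$ gives a uniform coercivity estimate, so the $u_j$ (shifted to $\sup=0$) lie in a compact energy sublevel. Take an $L^1$-limit point $w$. By upper semicontinuity of $E$ and the convergence $\int u_j\,d\mu_j\to\int w\,d\mu$ we get
\begin{equation*}
F_\mu(w)\geq\limsup_j F_{\mu_j}(u_j)=\lim_j E^*(\mu_j)=E^*(\mu),
\end{equation*}
so $w$ is a maximizer. By uniqueness $w=u$ after normalization, and equality in the $\limsup$ forces $E(u_j)\to E(u)$. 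Theorem~\ref{thm-darvas-inputs} then upgrades $L^1$ convergence with converging energies to $d_1$ convergence.

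\emph{Main obstacle.} I expect the hardest step to be the joint continuity of $(u,\mu)\mapsto\int u\,d\mu$ on energy-bounded sets when $\mu_j\to\mu$ strongly. My first attempt would show that a uniform bound on $E^*(\mu_j)$ gives uniform integrability of $-u$ against the $\mu_j$, via the quantitative coercivity $\int(-u)\,d\mu\lesssim E^*(\mu)^{1/2}\abs{E(u)}^{1/2}+\cdots$. I would then combine this with truncation $\max(u,-T)$ and quasi-continuity to reduce to bounded quasi-continuous functions, for which weak convergence plus non-charging of pluripolar sets suffices.
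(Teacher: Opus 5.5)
The paper does not prove this statement; it imports it from \cite[Proposition~2.6]{BBEGZ19}. Judged on its own, your outline follows the Berman--Boucksom--Guedj--Zeriahi variational scheme, and the injectivity and envelope Euler--Lagrange steps are fine. The genuine gap is the step you flag as the main obstacle, and your plan for it cannot work.

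To conclude $F_\mu(w)\geq\lim_jE^*(\mu_j)$ you need $\liminf_j\int u_j\,d\mu_j\geq\int w\,d\mu$ along the \emph{moving} sequence $u_j$. Every ingredient you propose for this is equally available when $\mu_j\to\mu$ only weakly with $\sup_jE^*(\mu_j)<\infty$: a uniform bound on $E^*(\mu_j)$, truncation, quasi-continuity, weak convergence, and the fact that each $\mu_j$ ignores pluripolar sets. In that setting your displayed chain, together with weak lower semicontinuity of $E^*$, would prove $E^*(\mu_j)\to E^*(\mu)$, and this is false.

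Here is a counterexample on $\mathbb{P}^1$ with the Fubini--Study form. Let $\lambda_j$ be normalized area on a disc of radius $r_j$ about $p_j$, and choose $m_j\to0$ with $m_j^2\log(1/r_j)=1$. Put $\mu_j=(1-m_j)\nu+m_j\lambda_j$. Let $u_j$ be $m_j\max\{\log d(\cdot,p_j),\log r_j\}$ minus its supremum, where $d$ is the chordal distance. Then:
\begin{itemize}
\item $E^*(\mu_j)$ and $-E(u_j)$ are of order $m_j^2\log(1/r_j)=1$, while $E^*(\nu)=0$;
\item $\mu_j\to\nu$ weakly and $u_j\to0$ in $L^1$;
\item $u_j\leq-1/m_j+O(1)$ on the disc, so $\int u_j\,d\mu_j\leq-1+o(1)$.
\end{itemize}
The same example shows that a uniform energy bound does not make $-u$ uniformly integrable against the $\mu_j$: the tails $\int_{\{u_j<-t\}}(-u_j)\,d\mu_j$ stay of order one. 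So the convergence $E^*(\mu_j)\to E^*(\mu)$ must enter the key estimate itself, and your plan never uses it there.

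The fix is to test the maximality of $u_j$ against the fixed solution $\varphi=u_\mu$, instead of passing to a limit along $u_j$. With $I(u,v)=\int(u-v)\bigl(\MA(v)-\MA(u)\bigr)$, concavity of $E$ gives
\[
0\leq\tfrac1{n+1}I(u_j,\varphi)\leq E(u_j)-E(\varphi)-\int(u_j-\varphi)\,d\mu_j
=E^*(\mu_j)-E^*(\mu)+\int\varphi\,d(\mu_j-\mu).
\]
Strong convergence removes the first difference. What remains is $\int\varphi\,d\mu_j\to\int\varphi\,d\mu$ for one fixed $\varphi\in\cE^1$. Truncation and quasi-continuity do work here, provided the capacity domination $\mu_j(K)\leq F(\mathrm{Cap}(K))$ and the tail bound for $\varphi$ are uniform in $j$. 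Both are consequences of the uniform energy bound. The identity
\[
\int u_j\,d\mu_j=\int u_j\,d\mu-I(u_j,\varphi)+\int\varphi\,d(\mu_j-\mu)
\]
then yields the convergence your chain needs, once $u\mapsto\int u\,d\mu$ is continuous on energy sublevels for the fixed $\mu$.

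That fixed-measure continuity is exactly what your existence step uses, and its justification there is also incomplete. Hartogs' lemma gives only $\limsup_j\int u_j\,d\mu\leq\int u\,d\mu$, whereas maximizing $E-\int u\,d\mu$ needs the reverse inequality, which requires a separate BBGZ-type argument.

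Similarly, in your continuity-of-$\MA$ step, write
\[
\int u_k\MA(u_k)-\int u\MA(u)=\int(u_k-u)\MA(u_k)+\int u\,d\bigl(\MA(u_k)-\MA(u)\bigr).
\]
Neither $I_1(u_k,u)$ nor $\int\abs{u_k-u}\MA(u)$ controls the second term. That term is again the fixed-function convergence above.
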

In particular, if $\mu$ is a finite-energy probability measure, there is a unique $u_\mu\in\cE^1(X,\omega)$ satisfying
\begin{equation}
E(u_\mu)=0,\qquad \MA(u_\mu)=\mu.
\label{eq-inverse-ma}
\end{equation}
We denote this normalized solution map by $\mu\mapsto u_\mu$. 

We also recall 
regularization and uniform integrability 
results for $\omega$-psh functions. 

\begin{thm}[{\cite[Theorem~1]{BK07}; \cite[Theorem~A.4]{Dar19}}]
\label{thm-smooth-decreasing}
Every $u\in\PSH(X,\omega)$ is the pointwise decreasing limit of smooth functions $v_j$ satisfying $\omega+\ddc v_j>0$.
\end{thm}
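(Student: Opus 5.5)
The plan is the classical local-convolution-and-gluing argument, in the spirit of Richberg and Demailly, as streamlined by B{\l}ocki--Ko{\l}odziej. First I would produce smooth functions that decrease to $u$ and are only \emph{almost} $\omega$-plurisubharmonic. Then a rescaling restores strict positivity.

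\textbf{Local regularization.} Cover $X$ by finitely many coordinate balls $B_i'\Subset B_i$, and fix smooth local potentials $\rho_i$ on $B_i$ with $\ddc\rho_i=\omega$. Then $u+\rho_i$ is plurisubharmonic on $B_i$. Let $(u+\rho_i)_\delta$ denote convolution with a radial mollifier of radius $\delta$. These functions are smooth and plurisubharmonic on slightly smaller balls, and they decrease to $u+\rho_i$ as $\delta\downarrow0$, by the sub-mean-value property. Set
\begin{equation*}
u_{i,\delta}:=(u+\rho_i)_\delta-\rho_i .
\end{equation*}
Then $u_{i,\delta}$ is smooth, decreases pointwise to $u$, and satisfies $\omega+\ddc u_{i,\delta}\geq0$ on the smaller ball. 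In particular there is no positivity loss at all locally.

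\textbf{Gluing.} Choose smooth functions $\eta_i$ on $B_i$ with the following properties:
\begin{enumerate}
\item $\eta_i$ is close to $0$ on $B_i'$;
\item $\eta_i\to-\infty$ near $\partial B_i$;
\item at every point of $X$, the maximum of the $\eta_i$ over the balls containing that point is attained by some index with the point well inside $B_i'$.
\end{enumerate}
Define
\begin{equation*}
v_\delta:=\widetilde{\max}_i\,\bigl(u_{i,\delta}+\eta_i\bigr),
\end{equation*}
where $\widetilde{\max}$ is a regularized maximum. It is smooth and monotone in each argument, and $v_\delta$ is globally well defined.

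The positivity loss now comes only from $\ddc\eta_i$, which is bounded by $C\omega$. This loss is removed by the standard trick of replacing $\eta_i$ with $\epsilon\eta_i$, at the cost of shrinking the overlap margins. One also has to check that, for $\delta$ small, $u_{i,\delta}+\epsilon\eta_i$ near $\partial B_i$ stays below the competing terms. This uses only the uniform upper bound on $u$ together with the decrease $\eta_i\to-\infty$.

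The outcome is smooth $v_j$ with $\omega+\ddc v_j\geq-\epsilon_j\omega$, where $\epsilon_j\to0$, and with $v_j\downarrow u$, since $\widetilde{\max}$ is monotone and each argument decreases. Replacing $v_j$ by $v_j+2^{-j}$ makes the sequence strictly decreasing if needed.

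\textbf{Restoring strict positivity.} Put $w_j=(1-\delta_j)v_j+\delta_j\cdot 0$ with $\delta_j>\epsilon_j$ and $\delta_j\downarrow0$. Then
\begin{equation*}
\omega+\ddc w_j=\delta_j\omega+(1-\delta_j)(\omega+\ddc v_j)\geq(\delta_j-\epsilon_j)\omega>0.
\end{equation*}
Monotonicity of $w_j$ is arranged by first normalizing $u\leq0$, so that $v_j\leq$ some $c_j\downarrow0$, and then adjusting by constants $2^{-j}$ so that $w_{j+1}\leq w_j$. Finally, $w_j\to u$ pointwise, because $\delta_jv_j\to0$ wherever $u>-\infty$, and $w_j\to-\infty$ where $u=-\infty$.

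\textbf{Main obstacle.} The delicate point is the gluing step. It must simultaneously keep the positivity loss $\epsilon_j\to0$ and preserve monotonicity in $j$. I would handle it as in \cite{BK07}: fix the $\eta_i$ once and for all, and let only $\delta$ vary. Monotonicity of the regularized maximum then transfers the monotonicity of each local convolution to $v_\delta$.
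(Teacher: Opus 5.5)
Your local regularization and your final rescaling $w_j=(1-\delta_j)v_j$ are fine. The rescaling is exactly the ``factors tending to one'' step that the paper adds to the cited theorem of B{\l}ocki--Ko{\l}odziej; the paper itself does not reprove that theorem.

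\textbf{The gap is in the gluing step.} Your claim that keeping $u_{i,\delta}+\epsilon\eta_i$ below the competing terms near $\partial B_i$ ``uses only the uniform upper bound on $u$ together with $\eta_i\to-\infty$'' is false. The competitors $u_{k,\delta}+\epsilon\eta_k$ have no lower bound, because $u_{k,\delta}(x)\to u(x)$, which may be $-\infty$. What the gluing actually needs is an upper bound on the mismatch $u_{i,\delta}-u_{k,\delta}$ between regularizations computed in different charts. For unbounded $u$ this mismatch does not tend to zero. For example, suppose $u=c\log\abs{z_1}+(\text{smooth})$ near a point in chart $k$, and chart $i$ is related by $z=(2w_1,w_2,\ldots,w_n)$. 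Then $u_{i,\delta}-u_{k,\delta}=c\log2+O(\delta^2)$ along $\{z_1=0\}$, for every $\delta$.

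To switch off the $i$-th term across such a mismatch, $\epsilon\eta_i$ must fall from about $0$ on $B_i'$ to below $-c\log 2$. It must also satisfy $dd^c(\epsilon\eta_i)\geq-\epsilon'\omega$ wherever that term can be active. By the maximum principle applied to $\epsilon\eta_i+C\epsilon'\abs{z}^2$, such a function can fall by at most $O(\epsilon')$. Your $\eta_i$ are fixed without knowing where the singularities of $u$ lie, so in general the loss $\epsilon'$ cannot be made to tend to zero. This is the Lelong-number obstruction familiar from Demailly's regularization, whose positivity loss is proportional to Lelong numbers. For the same reason, $dd^c\eta_i\geq-C\omega$ cannot hold on all of $B_i$ if $\eta_i\to-\infty$ at $\partial B_i$: then $\eta_i+C\abs{z}^2$ would be psh and tend to $-\infty$ at the boundary.

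\textbf{How to close the gap.} The missing step is to make the mismatch small before gluing, which is possible for bounded functions.
\begin{enumerate}
\item If $\abs{\phi}\leq M$, convexity of spherical means in $\log r$ bounds the Lelong ratios of $\phi+\rho_k$ at radius $r$ by $2M/\log(R/r)$, uniformly in the centre. From this one gets $\sup\abs{\phi_{i,\delta}-\phi_{k,\delta}}\to0$ on the overlaps.
\item Richberg's gluing then works with $\eta_i$ smooth near $\overline{B_i}$, a fixed drop, and $\epsilon$ larger than the mismatch. There is no need for $\eta_i\to-\infty$.
\item Apply this to the truncations $\max(u,-t)$. Assemble a single sequence by a Dini-type selection: on the compact $X$, decreasing continuous approximants of an upper semicontinuous function lying strictly below a continuous $g$ are eventually below $g$.
\item Take continuous $\psi_j\downarrow u$ with $\psi_j>u$, a truncation level far below $\min_X\min(v_{j-1},\psi_j)$, then $\epsilon_j$ and $\delta$ small, and a small added constant. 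This gives $u<v_j<\min(v_{j-1},\psi_j)$ with $\omega+dd^cv_j\geq-\epsilon_j'\omega$.
\end{enumerate}
This selection also replaces your monotonicity argument, which fails as written. Since $\eta_i\leq0$, the terms $\epsilon_j\eta_i$ increase as $\epsilon_j\downarrow0$, so ``fix $\eta_i$ and vary only $\delta$'' is incompatible with rescaling $\eta_i$ by $\epsilon_j$. Your rescaling step then supplies strict positivity.
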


The decreasing smooth $\omega$-plurisubharmonic regularization is due to B\l ocki and Ko\l odziej \cite[Theorem~1]{BK07}; multiplying the approximants by factors tending to one and taking a diagonal subsequence gives strict positivity as recorded in \cite[Theorem~A.4]{Dar19}.  The approximants are not asserted to have zero supremum, so we normalize them explicitly in the proof of Proposition~\ref{prop-l1-entropy}.

\begin{thm}[{\cite[Proposition~1.4]{BBEGZ19}}]
There exist $\alpha_X,C_X>0$ such that
\begin{equation}
\sup_{\substack{\varphi\in\PSH(X,\omega)\\ \sup_X\varphi=0}}
\int_Xe^{-\alpha_X\varphi}\,d\nu\leq C_X.
\label{eq-alpha-integrability}
\end{equation}
\end{thm}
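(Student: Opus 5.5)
The plan is to reduce the global statement to a uniform local Skoda-type estimate on finitely many coordinate balls, using compactness of the normalized family to make every constant uniform. Fix a finite cover of $X$ by coordinate balls $B_i=B(x_i,r_i)$ such that the doubled balls $2B_i$ are still contained in coordinate charts. On each $2B_i$ fix a smooth strictly psh local potential $\rho_i$ with $\ddc\rho_i=\omega$. For $\varphi\in\PSH(X,\omega)$ with $\sup_X\varphi=0$, the function $\psi_i=\varphi+\rho_i$ is then plurisubharmonic on $2B_i$ and bounded above by $\sup_{2B_i}\rho_i$. Since $\int e^{-\alpha\varphi}d\nu\leq\sum_i\int_{B_i}e^{-\alpha\varphi}d\nu$ and $e^{\alpha\rho_i}$ is bounded on $B_i$, it suffices to prove
\begin{equation*}
\sup_\varphi\int_{B_i}e^{-\alpha\psi_i}\,d\lambda\leq C
\end{equation*}
for a single $\alpha>0$ and every $i$.

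Step one supplies uniform $L^1$ and mass bounds. By \eqref{eq-mean-sup} with $\sup_X\varphi=0$, we have $\int_X(-\varphi)\,d\nu\leq C_X$. Hence the family $\{\psi_i\}$ is bounded in $L^1(2B_i)$ and uniformly bounded above. Integrating $\ddc\psi_i=\omega_\varphi$ against a cutoff $\chi$ with $\chi=1$ on $\tfrac32B_i$ gives
\begin{equation*}
\int\chi\,\omega_\varphi\wedge\omega^{n-1}=\int\chi\,\omega^n+\int\varphi\,\ddc\chi\wedge\omega^{n-1},
\end{equation*}
which is bounded by the $L^1$ bound. Therefore the Laplacian masses $\Delta\psi_i(\tfrac32B_i)$ are uniformly bounded by some $M$. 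In particular, all Lelong numbers $\nu(\psi_i,x)$ for $x\in\bar B_i$ are uniformly bounded by a constant $\nu_0$ depending only on $(X,\omega)$ and the cover. The reason is that the Lelong number at $x$ is dominated by the normalized trace mass of a ball of fixed radius around $x$.

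Step two is the uniform local integrability, which I expect to be the main obstacle. Skoda's theorem only gives integrability of $e^{-\alpha\psi}$ for $\alpha<2/\nu(\psi,x)$ one function at a time, and we need uniformity over a family. I would first try to invoke H\"ormander's uniform version (\emph{An introduction to complex analysis in several variables}, Theorem~4.4.5). It says that a family of psh functions on a pseudoconvex domain which is bounded above and relatively compact in $L^1_{\mathrm{loc}}$ admits a common $\alpha$ and $C$ with $\int_Ke^{-\alpha\psi}\leq C$ on compacts $K$. The family $\{\psi_i\}$ is indeed relatively compact in $L^1_{\mathrm{loc}}(2B_i)$, since psh functions bounded above and bounded in $L^1$ form a compact set (or are identically $-\infty$, which the $L^1$ bound excludes). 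This yields the claim for every $\alpha<\alpha_i$, and we take $\alpha_X=\min_i\alpha_i$.

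If a more self-contained argument is preferred, I would proceed by contradiction. Suppose $\psi_j$ are functions in the family with $\int_{B_i}e^{-\psi_j/j}\to\infty$. Extract a subsequence converging in $L^1$ to some psh $\psi$. Apply the Demailly--Koll\'ar semicontinuity of integrability thresholds (or the Guedj--Zeriahi uniform Skoda theorem, which uses only the Lelong bound $\nu_0$). This gives uniform integrability of $e^{-\alpha\psi_j}$ for some $\alpha<2/\nu_0$ and all large $j$, contradicting the blow-up. Either route produces constants depending only on the cover, the $\rho_i$, and $C_X$ from \eqref{eq-mean-sup}, hence only on $(X,\omega)$.
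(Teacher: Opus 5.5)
Your argument is correct. The paper gives no proof to compare it with: it imports the estimate from \cite[Proposition~1.4]{BBEGZ19}, where it is stated in the more general singular setting of that paper.

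You instead prove directly the case this paper actually uses, namely smooth $X$ and smooth $\nu$. A finite chart reduction turns the global bound into a uniform local bound for the family $\psi_i=\varphi+\rho_i$ on $2B_i$. You correctly obtain the uniform upper bound and the relative compactness in $L^1_{\mathrm{loc}}(2B_i)$ from $\sup_X\varphi=0$ and \eqref{eq-mean-sup}. The uniform local bound then comes from H\"ormander's Theorem~4.4.5. Alternatively, it comes from a contradiction argument using Demailly--Koll\'ar semicontinuity, or Zeriahi's uniform Skoda theorem with your Lelong bound $\nu_0$.

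Your route makes explicit where the uniformity comes from. The source is $L^1$ compactness of $\PSH_0(X,\omega)$, together with the cohomological bound on Lelong numbers, which only the second variant needs. It is not more elementary, however: the uniform local theorem you invoke carries essentially all of the content, and the Demailly--Koll\'ar variant itself rests on Ohsawa--Takegoshi. The citation costs the paper nothing and covers more general spaces and measures than it needs.

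A minor simplification: the trace-mass bound in your first step follows at once from $\int_X\omega_\varphi\wedge\omega^{n-1}=V$. This holds for every $\varphi\in\PSH(X,\omega)$, so no cutoff is needed.
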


Estimate \eqref{eq-alpha-integrability} is a compact K\"ahler form of Skoda-type exponential integrability; we use the formulation in \cite[Proposition~1.4]{BBEGZ19}.




Finally, \cite[Chapter~I, Proposition~5.9]{Dem} states that $\PSH(\Omega)\cap L^1_{\mathrm{loc}}(\Omega)$ is closed in $L^1_{\mathrm{loc}}$ and that each $L^1_{\mathrm{loc}}$-bounded subset is relatively compact.  Applied in finitely many charts to $u+\rho$, where $\ddc\rho=\omega$, and combined with \eqref{eq-mean-sup}, this gives $L^1$ relative compactness of $\PSH_0(X,\omega)$.  The local submean inequality also gives
\begin{equation}
u_j\longrightarrow u\text{ in }L^1,\quad u_j,u\in\PSH(X,\omega)
\quad\Longrightarrow\quad
\sup_Xu_j\longrightarrow\sup_Xu.
\label{eq-sup-continuity}
\end{equation}
To justify \eqref{eq-sup-continuity}, the Hartogs upper estimate gives the upper limit inequality.  If the lower limit failed by $\delta>0$, the $L^1$ limit would be bounded above almost everywhere by $\sup_Xu-\delta$ along a subsequence; the submean inequality would extend this bound everywhere, a contradiction.

\subsection{Entropy sublevels and the finite-energy closure}
\label{sec-closure}

We first record two elementary facts about relative entropy.  For a probability measure $\mu$ on $X$,
\begin{equation}
\Ent(\mu\mid\nu)=\sup_{g\in C(X)}
\left\{\int_Xg\,d\mu-\log\int_Xe^g\,d\nu\right\}.
\label{eq-entropy-variational}
\end{equation}
For bounded Borel $g$, set $Z_g=\int e^g d\nu$ and $d\nu_g=Z_g^{-1}e^g d\nu$. If $D=\Ent(\mu\mid\nu)<\infty$, nonnegativity of relative entropy gives
\begin{equation*}
0\leq\Ent(\mu\mid\nu_g)=D-\int g\,d\mu+\log Z_g.
\end{equation*}
For $\mu=f\nu$, take $g_M=\max\{-M,\min\{\log f,M\}\}$, with $\log0=-\infty$. Then $e^{g_M}\to f$ and $e^{g_M}\leq1+f$, while $\int g_Md\mu\to\int f\log f d\nu$, possibly $+\infty$: the negative part is dominated by $-f\log f\,\mathds{1}_{\{f<1\}}\in L^1(\nu)$ and the positive part increases. This proves the variational formula over bounded Borel functions, including infinite entropy. If $\mu$ is not absolutely continuous, take $A$ with $\nu(A)=0<\mu(A)$; the tests $M\mathds{1}_A$ make that supremum infinite.

To recover the continuous tests in \eqref{eq-entropy-variational}, approximate each bounded Borel $g$, $\abs{g}\leq M$, by continuous $g_j$, $\abs{g_j}\leq M$, in $L^1(\mu+\nu)$. Inner and outer regularity, continuous approximation of indicators, and truncation give this approximation. Since $\abs{e^a-e^b}\leq e^M\abs{a-b
}$ on $[-M,M]$, both integrals converge. Thus the two suprema agree, and \eqref{eq-entropy-variational} proves weak lower semicontinuity.

\begin{lem}
\label{lem-measurable-entropy}
For probability measures $\mu,\nu$ with $D=\Ent(\mu\mid\nu)<\infty$ and any nonnegative Borel function $g$, allowing the value $+\infty$,
\begin{equation}
 \int e^g d\nu<\infty\quad\Longrightarrow\quad
 \int g\,d\mu\leq D+\log\int e^g d\nu<\infty.
 \label{eq-measurable-entropy}
\end{equation}
\end{lem}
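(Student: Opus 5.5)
The plan is to reduce to the bounded case already established in the text preceding the lemma. There, for bounded Borel $h$ with $Z_h=\int e^h d\nu<\infty$, nonnegativity of $\Ent(\mu\mid\nu_h)$ gives
\[
\int h\,d\mu\leq D+\log Z_h .
\]
Apply this to the truncations $g_M=\min\{g,M\}$, $M>0$. Each $g_M$ is a bounded nonnegative Borel function, so $\int e^{g_M}d\nu\leq e^M<\infty$. The inequality therefore yields
\[
\int g_M\,d\mu\leq D+\log\int e^{g_M}d\nu .
\]

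Next let $M\to\infty$. On the left, $g_M\uparrow g$ pointwise (including where $g=+\infty$), so monotone convergence gives $\int g_M\,d\mu\uparrow\int g\,d\mu$. On the right, $e^{g_M}\leq e^{g}$, hence $\int e^{g_M}d\nu\leq\int e^g d\nu$; monotonicity alone suffices, and no limit is needed. Combining the two gives
\[
\int g\,d\mu\leq D+\log\int e^g d\nu,
\]
which is finite since both $D$ and $\int e^g d\nu$ are finite.

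Two small points need checking. First, $\int e^g d\nu<\infty$ forces $\nu(\{g=+\infty\})=0$, and $D<\infty$ forces $\mu\ll\nu$, so $\mu(\{g=+\infty\})=0$ as well; in any case monotone convergence handles infinite values. Second, the bounded-case inequality relies on $\Ent(\mu\mid\nu_h)=D-\int h\,d\mu+\log Z_h$. This holds because $d\mu/d\nu_h=Z_h e^{-h}f$ with $f=d\mu/d\nu$, and $\int\abs{h}\,d\mu<\infty$ for bounded $h$, so the splitting of the logarithm is legitimate. I do not expect a genuine obstacle. The only care needed is this justification of the identity for bounded $h$ and the use of truncation from above to avoid integrability issues; nonnegativity of $g$ ensures the left side is well defined in $[0,\infty]$ throughout.
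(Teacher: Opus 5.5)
Your proposal is correct and is essentially the paper's proof: apply the bounded-Borel inequality $\int g_M\,d\mu\leq D+\log\int e^{g_M}\,d\nu$ to the truncations $g_M=\min\{g,M\}$, bound $\int e^{g_M}\,d\nu\leq\int e^g\,d\nu$ by monotonicity, and let $M\to\infty$ by monotone convergence. Your extra check of the identity $\Ent(\mu\mid\nu_h)=D-\int h\,d\mu+\log Z_h$ for bounded $h$ is a harmless addition; the paper takes it from the preceding paragraph.
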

\begin{proof}
Apply the bounded Borel inequality just proved to $g_M=\min\{g,M\}$:
\begin{equation*}
 \int g_Md\mu\leq D+\log\int e^{g_M}d\nu
 \leq D+\log\int e^g d\nu.
\end{equation*}
Monotone convergence proves \eqref{eq-measurable-entropy}. Integrability against $\mu$ is a conclusion, not an assumption. The proof uses only measurability, so the lemma also applies on the polytope in Section~\ref{sec-toric}.
\end{proof}

\begin{thm}[{\cite[Theorem~2.17 and Corollary~2.19]{BBEGZ19}}]
\label{thm-bbegz-entropy}
Every probability measure of finite relative entropy with respect to the smooth measure $\nu$ has finite energy.  For each $B<\infty$, the family
\begin{equation*}
\{\mu:\Ent(\mu\mid\nu)\leq B\}
\end{equation*}
is strongly compact, and weak convergence $\mu_j\rightharpoonup\mu$ inside this family implies
\begin{equation*}
d_1(u_{\mu_j},u_\mu)\longrightarrow0.
\end{equation*}
\end{thm}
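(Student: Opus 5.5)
The statement has three parts: finite entropy implies finite energy, compactness of the sublevel, and $d_1$-continuity of $\mu\mapsto u_\mu$. Throughout I use the energy of a measure,
\begin{equation*}
E^*(\mu)=\sup_{\varphi\in\cE^1(X,\omega)}\Bigl\{E(\varphi)-\int_X\varphi\,d\mu\Bigr\},
\end{equation*}
and call a probability measure of finite energy when $E^*(\mu)<\infty$. The strong topology on such measures is weak convergence together with convergence of $E^*$.

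\emph{Finite entropy implies finite energy.} Let $\Ent(\mu\mid\nu)\leq B$. The functional in the supremum is invariant under adding constants, so I may take $\sup_X\varphi=0$. Then $E(\varphi)\leq\sup_X\varphi=0$, which follows from \eqref{eq-e-definition} and monotonicity of $E$. Apply Lemma~\ref{lem-measurable-entropy} with $g=-\alpha_X\varphi\geq0$ and use \eqref{eq-alpha-integrability}. This gives
\begin{equation*}
\int_X(-\varphi)\,d\mu\leq\alpha_X^{-1}(B+\log C_X),
\end{equation*}
and hence $E^*(\mu)\leq\alpha_X^{-1}(B+\log C_X)$, uniformly on the sublevel.

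\emph{A uniform tail estimate.} The same lemma, applied with $g=\tfrac{\alpha_X}{2}(-\varphi)\mathds 1_{\{-\varphi>M\}}$ and combined with Hölder's inequality against \eqref{eq-alpha-integrability}, should give
\begin{equation*}
\int_{\{\varphi<-M\}}(-\varphi)\,d\mu\leq\eta(M),\qquad\eta(M)\to0\ \text{as}\ M\to\infty.
\end{equation*}
Here $\eta$ depends only on $B$ and $X$: it is uniform over $\varphi\in\PSH_0(X,\omega)$ and over $\mu$ in the sublevel. This uniform integrability drives everything below.

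\emph{Compactness.} By \eqref{eq-entropy-variational}, the map $\mu\mapsto\Ent(\mu\mid\nu)$ is weakly lower semicontinuous. Since $X$ is compact, Prokhorov gives weak compactness of the sublevel. It remains to show that weak convergence $\mu_j\rightharpoonup\mu$ inside the sublevel forces $E^*(\mu_j)\to E^*(\mu)$.

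\emph{Lower semicontinuity of $E^*$.} For fixed $\varphi$, the map $\mu\mapsto\int\varphi\,d\mu$ is weakly continuous up to the truncation error. Indeed, $\max\{\varphi,-M\}$ is bounded and quasi-continuous, and the tail estimate controls the rest. Since measures of bounded entropy put no mass on pluripolar sets and are dominated in capacity, the quasi-continuity issue is harmless. Taking the supremum over $\varphi$ gives $\liminf_j E^*(\mu_j)\geq E^*(\mu)$.

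\emph{Upper semicontinuity of $E^*$ (main obstacle).} Let $u_j$ be the maximizers, i.e.\ the solutions $\MA(u_j)=\mu_j$, normalized by $\sup_Xu_j=0$. Their energies are bounded below by $-\alpha_X^{-1}(B+\log C_X)$. By $L^1$ compactness of $\PSH_0(X,\omega)$, a subsequence converges in $L^1$ to some $u$, and upper semicontinuity of $E$ gives $\limsup_j E(u_j)\leq E(u)$. The hard step is
\begin{equation*}
\liminf_j\int_X u_j\,d\mu_j\geq\int_X u\,d\mu,
\end{equation*}
where both the function and the measure move. My plan is to truncate at level $-M$ and use the tail bound to make the error $\eta(M)$. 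For the bounded parts I would replace $u_j$ by $\tilde u_j=(\sup_{l\geq j}u_l)^*$, which decreases to $u$, so that $u_j\leq\tilde u_j$. The measure $\mu_j$ puts no mass on pluripolar sets, so $\tilde u_j\geq u_j$ $\mu_j$-a.e. I would then combine convergence in capacity of $\tilde u_j$ with the fact that a bounded entropy sublevel is uniformly absolutely continuous with respect to capacity; for the latter, apply Lemma~\ref{lem-measurable-entropy} to $g=t\mathds 1_{\{\text{small set}\}}$ and use Skoda-type volume--capacity comparison. Note that the inequality $u_j\leq\tilde u_j$ points the wrong way for a lower bound on $\int u_j\,d\mu_j$, so this step needs further thought. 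If it goes through, it gives $\limsup_j E^*(\mu_j)\leq E(u)-\int_X u\,d\mu\leq E^*(\mu)$.

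\emph{Conclusion.} Strong compactness of the sublevel follows from the two semicontinuity statements. The $d_1$-convergence $u_{\mu_j}\to u_\mu$ then follows directly from Theorem~\ref{thm-bbegz-homeomorphism}, since $u_{\mu_j}$ and $u_\mu$ carry the normalization $E=0$ used there.
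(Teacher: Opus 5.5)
The paper gives no proof of this statement: it imports it from \cite{BBEGZ19} and only checks that $\nu$ is tame. Several of your steps are fine: the bound $E^*(\mu)\leq\alpha_X^{-1}(B+\log C_X)$, weak compactness via lower semicontinuity of entropy and Prokhorov, and the final appeal to Theorem~\ref{thm-bbegz-homeomorphism}.

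The proposal first fails at the ``uniform tail estimate'', which is false as stated. Take a fixed $\varphi\in\PSH_0(X,\omega)$ with $\varphi=c\log\abs{z}^2+O(1)$ near a point, where $c>0$. Put $\mu_\e=(1-\e)\nu+\e\,\nu(B_r)^{-1}\mathds{1}_{B_r}\nu$ with $\nu(B_r)=\e e^{-B/(3\e)}$. Then $\Ent(\mu_\e\mid\nu)\leq 2B/3+2\e\log2\leq B$ for small $\e$, and $\mu_\e\rightharpoonup\nu$. But $-\varphi\geq cB/(3n\e)-O(1)$ on $B_r$, so $\int_{\{\varphi<-M_\e\}}(-\varphi)\,d\mu_\e\geq cB/(3n)-O(\e)$ with $M_\e\to\infty$. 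In particular $\int\varphi\,d\mu_\e\not\to\int\varphi\,d\nu$.

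Your H\"older computation at the fixed exponent $\alpha_X/2$ indeed only yields $\limsup_M\leq 2B/\alpha_X$: the $B$ term never vanishes. A vanishing tail needs exponential integrability of every order. One uses $\int_{\{\varphi<-M\}}(-\varphi)\,d\mu\leq t^{-1}\bigl(B+\log(1+\int_{\{\varphi<-M\}}e^{-t\varphi}d\nu)\bigr)$, then lets $M\to\infty$ and then $t\to\infty$. For a single $\varphi\in\cE^1$ this works because its Lelong numbers vanish. So your lower semicontinuity step survives, and it is in any case immediate: $E^*$ is a supremum over smooth $\varphi$ of weakly continuous affine functionals.

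The main gap is the upper semicontinuity step, and you are right that $\tilde u_j$ cannot close it. Bounding $\int u_j\,d\mu_j$ from below through $\tilde u_j$ requires $\int(\tilde u_j-u_j)\,d\mu_j\to0$. That essentially means $u_j\to u$ in capacity, which $L^1$ convergence does not give.

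The missing idea is uniform high-order integrability on the energy sublevel $\{w\in\PSH_0(X,\omega):E(w)\geq-C\}$. This set is $L^1$-compact and consists of functions with zero Lelong numbers. Zeriahi's uniform version of Skoda's theorem then gives $\sup_w\int e^{-tw}\,d\nu<\infty$ for every $t>0$. Lemma~\ref{lem-measurable-entropy} with $g=t\abs{u_j-u}$ now gives
\begin{equation*}
\int_X\abs{u_j-u}\,d\mu_j\leq\frac1t\Bigl(B+\log\int_Xe^{t\abs{u_j-u}}\,d\nu\Bigr).
\end{equation*}
Here $\abs{u_j-u}\to0$ in $\nu$-measure, and $e^{t\abs{u_j-u}}\leq e^{-tu_j-tu}$ is uniformly integrable, so the right side tends to $B/t$. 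Letting $t\to\infty$ yields $\int\abs{u_j-u}\,d\mu_j\to0$.

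The same estimate, applied to decreasing smooth approximants of the fixed limit $u$, gives $\int u\,d\mu_j\to\int u\,d\mu$. Hence $\int u_j\,d\mu_j\to\int u\,d\mu$, and $\limsup_jE^*(\mu_j)\leq E(u)-\int u\,d\mu\leq E^*(\mu)$. Without this uniform Skoda input, your argument establishes neither strong compactness nor the $d_1$ convergence.
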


The hypothesis in \cite[Theorem~2.17]{BBEGZ19} is that the reference measure is tame, meaning that its pullback to a resolution has an $L^p$ density for some $p>1$.  The smooth measure $\nu$ is tame. 

We shall also need a regularization that does not increase entropy.

\begin{lem}
\label{lem-heat-smoothing}
If $f\geq0$ and $\int_Xf\,d\nu=1$, there are smooth strictly positive probability densities $f_t$ such that $f_t\to f$ in $L^1(\nu)$ as $t\downarrow0$ and
\begin{equation}
\int_Xf_t\log f_t\,d\nu\leq\int_Xf\log f\,d\nu.
\label{eq-heat-entropy}
\end{equation}
\end{lem}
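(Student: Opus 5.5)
The natural choice is the heat semigroup of the Kähler metric. Let $\Delta$ be the Laplace--Beltrami operator of $\omega$; it is self-adjoint with respect to $\omega^n$, hence with respect to $\nu$. Let $p_t(x,y)$ be the heat kernel with respect to $\nu$ and set
\begin{equation*}
f_t(x)=\int_Xp_t(x,y)f(y)\,d\nu(y),\qquad t>0.
\end{equation*}

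\emph{Smoothness, positivity, mass.} Since $X$ is compact and connected, $p_t$ is smooth and strictly positive on $X\times X$ for every $t>0$. This is the strong maximum principle, or positivity of the heat kernel on a connected closed manifold. Stochastic completeness gives $\int_Xp_t(x,y)\,d\nu(y)=1$, and symmetry of $p_t$ gives $\int_Xp_t(x,y)\,d\nu(x)=1$. Therefore $f_t$ is smooth, and it is strictly positive because $f\geq0$ is nonzero with $\int f\,d\nu=1$. Fubini gives $\int f_t\,d\nu=1$.

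\emph{Convergence.} We use that $P_t$ is a contraction on $L^1(\nu)$, which follows from the two unit-mass identities. The strong continuity $P_tg\to g$ holds in $C(X)$ for continuous $g$. Density of $C(X)$ in $L^1(\nu)$ then gives $f_t\to f$ in $L^1(\nu)$ by an $\e/3$ argument.

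\emph{Entropy inequality.} Set $\Phi(s)=s\log s$, which is convex on $[0,\infty)$. For each fixed $x$, the measure $p_t(x,\cdot)\,d\nu$ is a probability measure, so Jensen's inequality gives
\begin{equation*}
\Phi(f_t(x))\leq\int_Xp_t(x,y)\Phi(f(y))\,d\nu(y).
\end{equation*}
The negative part of $\Phi$ is bounded by $e^{-1}$, so Fubini is legitimate, with the right-hand side possibly $+\infty$. Integrating in $x$ and using $\int p_t(x,y)\,d\nu(x)=1$ yields \eqref{eq-heat-entropy}.

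\emph{Main point.} The only step needing care is this double-stochasticity of the kernel, which is exactly what makes the Jensen argument work. It holds because $\Delta$ is symmetric with respect to the same measure $\nu$ that defines the entropy. The constant function $1$ is invariant, since $\Delta1=0$, and this together with symmetry gives both unit-mass identities.

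If one prefers to avoid heat kernel theory, an alternative route works equally well. Take a smooth symmetric doubly stochastic kernel $K_\delta(x,y)$, for instance built from a partition of unity and then symmetrized by Sinkhorn balancing. This is more cumbersome, so I would use the heat semigroup.
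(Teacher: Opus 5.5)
Your proposal is correct and follows essentially the same route as the paper. Both use the heat semigroup of the Kähler metric with its smooth, strictly positive, symmetric, stochastic kernel, get $L^1$ convergence from the contraction property plus density of continuous functions, and integrate the pointwise Jensen inequality for $s\log s$ against the invariant measure $\nu$; your justification of Fubini via $s\log s\geq -e^{-1}$ is a detail the paper leaves implicit.
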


\begin{proof}
Let $P_t$ be the heat semigroup of the Riemannian metric underlying $\omega$, whose volume measure is a constant multiple of $\nu$, and put $f_t=P_tf$.  The heat-kernel construction in \cite[Theorems~7.13 and~7.20]{Gri09} gives a smooth symmetric kernel $p_t(x,y)>0$ for $t>0$ with
\begin{equation*}
P_tf(x)=\int_Xp_t(x,y)f(y)\,d\nu(y),
\qquad \int_Xp_t(x,y)\,d\nu(y)=1.
\end{equation*}
Thus $P_t$ is a positivity-preserving Markov contraction, it preserves $\nu$, and $f_t$ is smooth and strictly positive.  The heat kernels form an approximate identity, so the $L^1$ contraction property and density of continuous functions give
\begin{equation*}
\lim_{t\downarrow0}\norm{P_tf-f}_{L^1(\nu)}=0.
\end{equation*}
Since $s\mapsto s\log s$ is convex, Jensen's inequality for the heat kernel gives
\begin{equation*}
f_t\log f_t\leq P_t(f\log f).
\end{equation*}
Integration against the invariant measure $\nu$ proves \eqref{eq-heat-entropy}.
\end{proof}

Smooth approximation in $d_1$ with convergence of the relative entropy
was established by Berman--Darvas--Lu
\cite[Lemma~3.1 and Theorem~3.2]{BDL17}.
For the closure statement below, we need the approximants to remain
in the same entropy sublevel. The heat regularization of the
Monge--Amp\`ere density in Lemma~\ref{lem-heat-smoothing}
provides this additional property.

\begin{prop}
\label{thm-closure-main}
For every $B\geq0$, the class $\cK_B$ defined in \eqref{eq-finite-energy-class} is $d_1$-compact and
\begin{equation}
\overline{\cK_B^{\cH}}^{\,d_1}=\cK_B.
\label{eq-d1-closure-main}
\end{equation}
For $B=0$, both entropy sublevels consist only of the zero potential.
\end{prop}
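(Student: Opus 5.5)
The proof has three parts: compactness of $\cK_B$, the closure identity \eqref{eq-d1-closure-main}, and the degenerate case $B=0$. Each will be reduced to Theorem~\ref{thm-bbegz-entropy}, the homeomorphism of Theorem~\ref{thm-bbegz-homeomorphism}, and the fact that $\sup$ and $E$ differ by a controlled, continuous amount.

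\textbf{Compactness.} Let $(u_j)\subset\cK_B$ and put $\mu_j=\MA(u_j)$, so $\Ent(\mu_j\mid\nu)\le B$. By Theorem~\ref{thm-bbegz-entropy} the set $\{\Ent\le B\}$ is strongly compact. After passing to a subsequence, $\mu_j\rightharpoonup\mu$ with $\Ent(\mu\mid\nu)\le B$, by weak lower semicontinuity from \eqref{eq-entropy-variational}. Theorem~\ref{thm-bbegz-entropy} then gives $d_1(u_{\mu_j},u_\mu)\to0$.

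Uniqueness in \eqref{eq-inverse-ma} gives $u_j=u_{\mu_j}+c_j$ with $c_j=-\sup_X u_{\mu_j}$. Since $d_1$-convergence implies $L^1$-convergence (Theorem~\ref{thm-darvas-inputs}), \eqref{eq-sup-continuity} yields $c_j\to c:=-\sup_X u_\mu$. Using $d_1(w,w+c)=\abs c$ and the triangle inequality, $u_j\to u_\mu+c$ in $d_1$. The limit lies in $\cE^1$, has supremum $0$, and has $\MA=\mu$ with entropy at most $B$, so it belongs to $\cK_B$. Because $d_1$ is a metric, this sequential compactness is compactness.

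\textbf{Closure.} We have $\cK_B^{\cH}\subset\cK_B$, and $\cK_B$ is compact, hence closed, so the inclusion $\subseteq$ in \eqref{eq-d1-closure-main} is immediate. For the reverse inclusion, fix $u\in\cK_B$ and write $\MA(u)=f\nu$. Lemma~\ref{lem-heat-smoothing} produces smooth positive densities $f_t$ with $f_t\to f$ in $L^1(\nu)$ and $\int f_t\log f_t\,d\nu\le B$.

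Yau's theorem (Calabi--Yau) gives smooth $w_t$ with $\omega_{w_t}>0$ and $\MA(w_t)=f_t\nu$. Normalize to $\sup_X w_t=0$; then $w_t\in\cK_B^{\cH}$. Since $f_t\nu\rightharpoonup f\nu$ inside the entropy-$B$ sublevel, Theorem~\ref{thm-bbegz-entropy} gives $d_1(u_{f_t\nu},u_{f\nu})\to0$. The same supremum bookkeeping as in the compactness step, via \eqref{eq-sup-continuity} and \eqref{eq-d1-constants-e}, then gives $w_t\to u$ in $d_1$.

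The main obstacle is this approximation step. One must make sure the smooth Yau solutions coincide, up to constants, with the normalized solution map $\mu\mapsto u_\mu$. This holds by uniqueness in $\cE^1$ (smooth potentials lie in $\cE^1$). One must also check that the heat-regularized densities stay inside the entropy-$B$ sublevel, which is exactly what \eqref{eq-heat-entropy} provides.

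\textbf{The case $B=0$.} Here $\Ent(\MA(u)\mid\nu)=0$ forces $\MA(u)=\nu=\MA(0)$. Uniqueness of the solution (Theorem~\ref{thm-bbegz-homeomorphism}) gives $u=0+c$, and the normalization $\sup_X u=0$ gives $c=0$. Hence $\cK_0=\cK_0^{\cH}=\{0\}$.
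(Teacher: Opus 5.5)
Your proposal is correct and follows essentially the same route as the paper. Compactness comes from Theorem~\ref{thm-bbegz-entropy} plus the supremum renormalization, which you phrase sequentially rather than as the continuous image of a compact set. The closure uses heat-regularized densities, Yau's theorem and the same constant bookkeeping, and the case $B=0$ uses the equality case of relative entropy together with uniqueness. The one tacit ingredient you should state is translation invariance, $d_1(w+c,w'+c)=d_1(w,w')$, which you need alongside $d_1(w,w+c)=\abs c$ to pass from $u_{\mu_j}\to u_\mu$ to $u_{\mu_j}+c_j\to u_\mu+c$.
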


\begin{proof}
Theorem~\ref{thm-bbegz-entropy}, applied to the tame measure $\nu$, says that the measure entropy sublevel is strongly compact and that the normalized Monge--Amp\`ere solution map in \eqref{eq-inverse-ma} is continuous there. 
The solutions normalized by $E=0$ form a compact set. 
By Theorem~\ref{thm-darvas-inputs} and \eqref{eq-sup-continuity}, 
the map $u\mapsto u-\sup_Xu$ is continuous in $d_1$. 
Its image is precisely $\cK_B$, which is therefore compact. 

Let $u\in\cK_B$ and write $\MA(u)=f\nu$.  Apply Lemma~\ref{lem-heat-smoothing} and let $u_t\in\cH_\omega$, normalized by $\sup_Xu_t=0$, solve
\begin{equation*}
V^{-1}\omega_{u_t}^n=f_t\nu.
\end{equation*}
Existence and uniqueness follow from 
\cite{Yau78}.  Equation \eqref{eq-heat-entropy} gives $u_t\in\cK_B^{\cH}$.  Since $f_t\nu\to f\nu$ weakly inside the fixed entropy sublevel as $t\downarrow0$, Theorem~\ref{thm-bbegz-entropy} first gives $d_1$ convergence of the solutions normalized by $E=0$.  Their background $L^1$ convergence and \eqref{eq-sup-continuity} show that the constants required to impose supremum zero also converge. Translation invariance of $d_1$ therefore yields $d_1(u_t,u)\to0$.  This proves that the right-hand side of \eqref{eq-d1-closure-main} is contained in the closure.  The converse follows from the weak lower semicontinuity established using \eqref{eq-entropy-variational} and the weak Monge--Amp\`ere convergence in Theorem~\ref{thm-darvas-inputs}.

Finally, $s\log s-s+1\geq0$, with equality only at $s=1$.  Since a probability density has mean one, entropy zero forces $f=1$.  Uniqueness in Theorem~\ref{thm-bbegz-homeomorphism}, followed by the normalization $\sup_Xu=0$, then gives $u=0$.
\end{proof}


\section{Sharp entropy for the background \texorpdfstring{$L^1$}{L1} metric}
\label{sec-l1}

We first isolate the elliptic approximation responsible for the upper bound.  Let $M$ be a fixed compact connected Riemannian manifold of real dimension $d\geq2$, let $dV_M$ be its Riemannian volume measure, and let $\cG$ be the mean-zero Green inverse of its positive Laplacian.  Thus, if $\sigma$ is a finite signed measure of total mass zero,
\begin{equation*}
(\cG\sigma)(x):=\int_MG(x,y)\,d\sigma(y).
\end{equation*}
We use the total variation norm
\begin{equation*}
\norm{\sigma}_{\TV}:=
\sup\left\{\sum_{j=1}^m\abs{\sigma(A_j)}:
M=\bigsqcup_{j=1}^m A_j\text{ is a finite Borel partition}\right\}.
\end{equation*}

\begin{lem}
\label{lem-green-cell}
Let $Q$ be a coordinate cube of diameter $h\leq h_M$, let $c_Q$ be its coordinate center, and let $\sigma_Q$ be a finite signed measure supported in $Q$.  If
\begin{equation}
\int_Qy^\alpha\,d\sigma_Q(y)=0\qquad\text{for }\abs\alpha\leq2,
\label{eq-moment-cancellation}
\end{equation}
then
\begin{equation}
\norm{\int_QG(\,\cdot\,,y)\,d\sigma_Q(y)}_{L^1(M,dV_M)}
\leq C_Mh^2\norm{\sigma_Q}_{\TV}.
\label{eq-green-cell-estimate}
\end{equation}
\end{lem}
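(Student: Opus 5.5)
We split the $L^1$ norm of $\cG\sigma_Q$ into a near zone and a far zone, using the moment cancellation \eqref{eq-moment-cancellation} only in the far zone. Fix a constant $K\geq2$ and let $Q^*$ be the set of points at distance at most $Kh$ from $c_Q$. For $h\leq h_M$ small, $Q^*$ lies in one coordinate chart.

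\emph{Near zone.} The Green function satisfies $\abs{G(x,y)}\leq C_M\,\rho(x,y)^{2-d}$ for $d\geq3$, and $\abs{G(x,y)}\leq C_M(1+\abs{\log\rho(x,y)})$ for $d=2$, where $\rho$ is the Riemannian distance. Fubini gives
\begin{equation*}
\int_{Q^*}\abs{\cG\sigma_Q}\,dV_M\leq\norm{\sigma_Q}_{\TV}\sup_{y\in Q}\int_{\{\rho(x,y)\leq (K+1)h\}}\abs{G(x,y)}\,dV_M(x).
\end{equation*}
In polar coordinates the supremum is $O(h^2)$, and $O(h^2\log(1/h))$ in the $d=2$ logarithmic case. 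Use $G(x,y)=\Phi(x,y)+R(x,y)$, where $\Phi$ is the Euclidean fundamental solution in the chart and $R$ is bounded. The integral of $\abs{\Phi}$ over a ball of radius $r$ is $O(r^2)$ if $d\geq3$. For $d=2$ it is $O(r^2\log(1/r))$, so we need a further cancellation. Since $\sigma_Q$ has total mass zero, we may subtract the constant $G$-value $-\frac1{2\pi}\log h$. This replaces $\log\rho$ by $\log(\rho/h)$, whose integral over the ball of radius $(K+1)h$ is $O(h^2)$.

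\emph{Far zone.} For $x\notin Q^*$ and $y\in Q$, Taylor-expand $y\mapsto G(x,y)$ in coordinates around $c_Q$ to second order with integral remainder. By \eqref{eq-moment-cancellation}, all polynomial terms of degree at most two integrate to zero against $\sigma_Q$. Only the third-order remainder survives:
\begin{equation*}
\abs{\cG\sigma_Q(x)}\leq C\,h^3\sup_{y\in Q}\abs{\nabla_y^3G(x,y)}\,\norm{\sigma_Q}_{\TV}.
\end{equation*}
This requires $G(x,\cdot)$ to be smooth on a neighborhood of $Q$. It is, since $\rho(x,y)\geq (K-1)h$ there and $G$ is smooth off the diagonal. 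The standard estimate $\abs{\nabla_y^3G(x,y)}\leq C_M\rho(x,y)^{-d-1}$, valid for all dimensions including $d=2$, combined with $\rho(x,y)\gtrsim\rho(x,c_Q)$, gives
\begin{equation*}
\int_{M\setminus Q^*}\abs{\cG\sigma_Q}\,dV_M\leq Ch^3\norm{\sigma_Q}_{\TV}\int_{\rho(x,c_Q)\geq Kh}\rho(x,c_Q)^{-d-1}\,dV_M(x)\leq Ch^2\norm{\sigma_Q}_{\TV}.
\end{equation*}
The last integral is $O(h^{-1})$: it converges at infinity in the chart, and the compactness of $M$ controls the region far from $c_Q$, where $G$ and its derivatives are bounded.

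The main obstacle is the uniform derivative bound $\abs{\nabla_y^k G(x,y)}\leq C\rho^{2-d-k}$ for $k\leq3$, uniform in the chart coordinates, on a compact manifold. I would take this from the parametrix construction: $G$ minus the Euclidean fundamental solution of the frozen-coefficient operator is less singular by one power, and each derivative costs one power of $\rho$. Two further points need care. First, the moments are taken in coordinates, so the Taylor expansion must be done in the same chart and not in geodesic normal coordinates. Second, the logarithmic case $d=2$ in the near zone needs the mass-zero subtraction described above.
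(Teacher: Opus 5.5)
Your proposal is correct and is essentially the paper's proof: a near/far split, the mass-zero cancellation of the $\log h$ term when $d=2$, and a second-order coordinate Taylor expansion about $c_Q$ whose third-order remainder, bounded by $h^3\rho^{-d-1}$, integrates to $O(h^2)$ over the far zone. The only differences are cosmetic. The paper subtracts $G(x,c_Q)$ in the near zone in every dimension, whereas your direct $\rho^{2-d}$ bound shows this is unnecessary for $d\geq3$. Also, your claim that $G-\Phi$ is bounded holds in general coordinates only for $d=2$, but you never use it when $d\geq3$.
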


\begin{proof}
In local coordinates, a properly supported pseudodifferential parametrix for the Laplacian has principal symbol $(g^{ij}(y)\xi_i\xi_j)^{-1}$.  The recursive symbol construction, followed by elliptic regularity for the smoothing remainder, gives
\begin{equation}
\abs{D_y^\alpha G(x,y)}\leq C_\alpha d(x,y)^{2-d-\abs\alpha}
\label{eq-green-derivatives}
\end{equation}
when $d>2$.  In dimension two, the zero-order estimate is $\abs{G(x,y)}\leq C_M(1+\abs{\log d(x,y)})$, whereas \eqref{eq-green-derivatives} remains valid for $\abs\alpha\geq1$.  These estimates follow by splitting the oscillatory integral at frequency $d(x,y)^{-1}$; the logarithm is precisely the borderline integral of $r^{-1}$.

The case $\alpha=0$ in \eqref{eq-moment-cancellation} says $\sigma_Q(Q)=0$.  Hence, on the near region
\begin{equation*}
U_{C_0h}(Q):=\{x\in M:d(x,Q)<C_0h\},
\end{equation*}
we may write
\begin{equation*}
\int_QG(x,y)\,d\sigma_Q(y)
=\int_Q\bigl(G(x,y)-G(x,c_Q)\bigr)\,d\sigma_Q(y).
\end{equation*}
If $d>2$, the Green bound and polar integration around $y$ and $c_Q$ give, uniformly for $y\in Q$,
\begin{align*}
\int_{U_{C_0h}(Q)}
\bigl(\abs{G(x,y)}+\abs{G(x,c_Q)}\bigr)\,dV_M(x)
&\leq C_M\int_0^{C_1h}r^{2-d}r^{d-1}\,dr\\
&\leq C_Mh^2.
\end{align*}
Integrating first in $x$ and then against $d\abs{\sigma_Q}(y)$ proves the near estimate.  When $d=2$, write locally
\begin{equation*}
G(x,y)=-\gamma_M\log d(x,y)+H(x,y),
\end{equation*}
where $H$ is bounded on the relevant coordinate neighborhood.  With $x=c_Q+h\xi$ and $y=c_Q+h\eta$, the singular part becomes
\begin{equation*}
-\gamma_M\log h-\gamma_M\log\frac{d(c_Q+h\xi,c_Q+h\eta)}{h}.
\end{equation*}
The term $-\gamma_M\log h$ occurs equally in $G(x,y)$ and $G(x,c_Q)$ and disappears because $\sigma_Q(Q)=0$.  The remaining rescaled logarithms are uniformly integrable in $\xi$, while $dV_M(x)$ is bounded by $C_Mh^2d\xi$.  This again gives $C_Mh^2\norm{\sigma_Q}_{\TV}$ on the near region.

We now treat the complement of $U_{C_0h}(Q)$.  Fix $x$ there and let $T_{2,x}$ be the Taylor polynomial of $y\mapsto G(x,y)$ at $c_Q$ through degree two.  Since the coordinate segment
\begin{equation*}
y_t:=c_Q+t(y-c_Q),\qquad 0\leq t\leq1,
\end{equation*}
stays in $Q$, Taylor's theorem with integral remainder gives
\begin{equation}
G(x,y)-T_{2,x}(y)
=\sum_{\abs\alpha=3}\frac{3}{\alpha!}(y-c_Q)^\alpha
\int_0^1(1-t)^2D_y^\alpha G(x,y_t)\,dt.
\label{eq-green-taylor-remainder}
\end{equation}
Put $r:=d(x,Q)$.  Then $d(x,y_t)\geq r$ for all $t$, and \eqref{eq-green-derivatives} with $\abs\alpha=3$ yields
\begin{equation*}
\abs{G(x,y)-T_{2,x}(y)}\leq C_Mh^3r^{-d-1}.
\end{equation*}
Every monomial of $T_{2,x}$ has degree at most two, so its integral against $\sigma_Q$ vanishes by \eqref{eq-moment-cancellation}.  Therefore
\begin{equation*}
\abs{\int_QG(x,y)\,d\sigma_Q(y)}
\leq C_Mh^3r^{-d-1}\norm{\sigma_Q}_{\TV}.
\end{equation*}
Choose a fixed $r_0$ below the coordinate radius.  The volume of a distance shell of radius $r$ is at most $C_Mr^{d-1}dr$, and hence
\begin{align*}
\int_{U_{r_0}(Q)\setminus U_{C_0h}(Q)}
\abs{\int_QG(x,y)\,d\sigma_Q(y)}\,dV_M(x)
&\leq C_Mh^3\norm{\sigma_Q}_{\TV}
\int_{C_0h}^{r_0}r^{-d-1}r^{d-1}\,dr\\
&\leq C_Mh^2\norm{\sigma_Q}_{\TV}.
\end{align*}
On $M\setminus U_{r_0}(Q)$ the third derivatives of $G$ are uniformly bounded, so \eqref{eq-green-taylor-remainder} contributes at most $C_Mh^3\norm{\sigma_Q}_{\TV}$ there.  Adding the near and far estimates proves \eqref{eq-green-cell-estimate}.
\end{proof}

The covering argument below adapts the multiscale approximation
and counting methods of Birman--Solomyak \cite{BirSol67} and
DeVore \cite{Dev98} to a bound on the total variation of the
distributional Laplacian.
The moment-preserving approximation makes this adaptation
explicit in the present setting. 

\begin{lem}
\label{lem-elliptic-cover}
For fixed $A,D>0$, let $\mathscr{F}_{A,D}$ be the set of all $f\in L^1(M,dV_M)$ such that $\Delta f$ is a finite measure and
\begin{equation*}
\norm{\Delta f}_{\TV}\leq A
\quad\text{and}\quad
\abs{\int_Mf\,dV_M}\leq D.
\end{equation*}
There is $C_{M,A,D}>0$ such that
\begin{equation}
H_\e(\mathscr{F}_{A,D},L^1(M,dV_M))
\leq C_{M,A,D}\e^{-d/2}
\label{eq-elliptic-cover}
\end{equation}
for all sufficiently small $\e$.
\end{lem}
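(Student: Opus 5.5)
Write $f=\bar f+\cG\sigma$, where $\bar f=\mathrm{vol}(M)^{-1}\int_Mf\,dV_M$ and $\sigma=\Delta f$, a signed measure of total mass zero with $\norm{\sigma}_{\TV}\leq A$. The constant $\bar f$ lies in $[-D',D']$ and is covered by $O(\e^{-1})$ points, which is negligible because $d\geq2$. It therefore suffices to cover $\{\cG\sigma:\sigma(M)=0,\ \norm{\sigma}_{\TV}\leq A\}$ in $L^1$ at scale $\e$ by at most $\exp(C\e^{-d/2})$ elements. The approach is to approximate $\sigma$ by a discrete measure from a finite dictionary, so that the difference has vanishing local moments through degree two. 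Lemma~\ref{lem-green-cell} then controls the error in $L^1$.

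\textbf{Construction.} Fix $h\asymp\e^{1/2}$ and partition $M$, via finitely many charts and a subordinate decomposition, into $K\asymp h^{-d}$ Borel cells $Q$. Each cell lies in a coordinate cube of diameter $\leq h$. In each cell fix $P=\binom{d+2}{2}$ nodes $y_{Q,1},\dots,y_{Q,P}$, chosen so that the map sending weights $(w_i)$ to the moments $\sum_iw_iy_{Q,i}^\alpha$, $\abs\alpha\leq2$, is invertible with inverse of norm bounded uniformly after rescaling to the unit cube. This is possible by choosing nodes unisolvent for quadratics in the rescaled variable $(y-c_Q)/h$, and working with centered moments, which span the same space. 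Given $\sigma$, let $\tau_Q=\sum_iw_{Q,i}\delta_{y_{Q,i}}$ match the centered moments of $\sigma|_Q$. In rescaled variables these moments are bounded by $\abs{\sigma}(Q)$, so $\sum_i\abs{w_{Q,i}}\leq C\abs{\sigma}(Q)$. Lemma~\ref{lem-green-cell} applied to $\sigma|_Q-\tau_Q$, summed over cells, gives
\begin{equation*}
\norm{\cG\sigma-\cG\tau}_{L^1}\leq C_Mh^2\sum_Q\abs{\sigma}(Q)\leq C_MAh^2\leq\e/3.
\end{equation*}
Here the total mass of $\tau$ equals $\sigma(M)=0$, so $\cG\tau$ is defined.

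\textbf{Quantization.} The main obstacle is the count. There are about $PK\asymp\e^{-d/2}$ real weights $w=(w_{Q,i})$ with $\norm{w}_{\ell^1}\leq CA$, and they must be rounded so that the log of the number of choices is still $O(\e^{-d/2})$. Rounding each weight to the grid $\eta\mathbb Z$ with $\eta\asymp\e/(K\log(1/\e))$ would cost a spurious $\log(1/\e)$ factor. To avoid it, I would use the $\ell^1$ structure: round to the grid $\eta\mathbb Z$ with $\eta=cA/(PK)$, so the total $\ell^1$ rounding error is at most $PK\eta=cA$. This alone is not small enough, so I need to gain from the mapping $\cG$ applied to point masses.

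A cleaner route is to put the rounding error on the nodes themselves and use regularity of $\cG$. A point mass $\delta_y-\delta_{y'}$ with $y,y'$ in the same cell has $\norm{\cG(\delta_y-\delta_{y'})}_{L^1}\leq Ch$ by the near/far argument of Lemma~\ref{lem-green-cell} with only the zeroth moment cancelling. More generally, $\norm{\cG\rho}_{L^1}\leq C\norm{\rho}_{\TV}$ for any mass-zero $\rho$, because $G(x,\cdot)$ is uniformly $L^1$ in $x$. Hence rounding errors of total variation $\leq c'\e$ suffice. Rounding each of the $PK$ weights to the grid $\eta\mathbb Z$ with $\eta=c'\e/(PK)\asymp\e^{1+d/2}$, the number of integer vectors $n$ with $\norm{n}_{\ell^1}\leq CA/\eta\asymp\e^{-1-d/2}$ in $\asymp\e^{-d/2}$ coordinates is bounded by the binomial estimate
\begin{equation*}
\binom{PK+CA/\eta}{PK}2^{PK}\leq\bigl(e(1+CA/(\eta PK))\bigr)^{PK}2^{PK},
\end{equation*}
which gives the exponent $PK\log(C/\e)$. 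This still carries a logarithm.

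To remove it, I would do the rounding multiscale, in the manner of Birman--Solomyak. Cells carrying mass $\abs{\sigma}(Q)\leq h^d$ (the typical case) have weights rounded at precision $\e h^d$, costing $O(1)$ bits each. Heavy cells are handled by dyadic refinement: a cell is subdivided until $\abs{\sigma}(Q')h_{Q'}^2\lesssim\e h^d$, with the refinement tree encoded. The total number of cells is then $\lesssim\e^{-d/2}$ by the budget $\sum\abs{\sigma}(Q)\leq A$, and each cell is encoded with $O(1)$ bits at relative precision adapted to its scale. Balancing these two losses is where I expect most of the work.

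Finally, each dictionary element $\cG\tau+c$ gives a center within $\e/2$. Replacing these by internal centers costs at most a factor $2$ in radius, which yields \eqref{eq-elliptic-cover}.
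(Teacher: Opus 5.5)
\textbf{There is a genuine gap in the quantization step.} Your approximation step is fine. The single-scale moment-matching measure $\tau$ is, up to the choice of nodes, the paper's $P_k\sigma$, and Lemma~\ref{lem-green-cell} gives $\norm{\cG(\sigma-\tau)}_{L^1}\leq C_MAh^2$. The gap is that the logarithm is never removed, and removing it is the whole content of the lemma.

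As long as you control the rounding error $\rho=\tau-\tau'$ only through $\norm{\cG\rho}_{L^1}\leq C\norm{\rho}_{\TV}$, the logarithm is unavoidable. The set of possible $\tau$ contains every mass-zero atomic measure on the nodes with $\ell^1$-norm at most $A$. You would therefore be covering an $\ell^1$-ball of dimension $m\asymp\e^{-d/2}$ in its own norm at relative accuracy $\e/A$, and volume comparison forces at least $(A/\e)^{m-1}$ pieces.

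Your proposed remedy does not escape this. Weights of size up to $h^d$, rounded at precision $\e h^d$, take about $1/\e$ values. Each therefore costs $\log(1/\e)$ nats, not $O(1)$ bits. Heavy cells are also not the source of the logarithm. By concavity, the cost $\sum_Q\log(1+\abs{\sigma}(Q)/(\e h^d))$ is largest when the mass is spread uniformly, and then it is already $\asymp\e^{-d/2}\log(1/\e)$. Moreover, the uniform partition already gives approximation error $C_MAh^2$ regardless of where $\abs{\sigma}$ concentrates, so adaptive refinement buys nothing here. As written, your argument proves only $H_\e\leq C\e^{-d/2}\log(1/\e)$.

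\textbf{The missing idea} is to quantize hierarchical, wavelet-type coordinates of $\tau$ rather than its raw weights, using moment cancellation at every intermediate scale, not just the finest one.
\begin{itemize}
\item The paper builds nested dyadic partitions $\mathscr{Q}_0,\dots,\mathscr{Q}_k$ with $2^{-2k}\asymp\e/A$. At each level it uses moment-preserving atomic replacements $P_j$, of rank at most $C2^{dj}$ and bounded in total variation.
\item It telescopes $\sigma=P_0\sigma+\sum_{j<k}(P_{j+1}-P_j)\sigma+(I-P_k)\sigma$.
\item Parent and child replacements match the same moments of $\sigma|_Q$ through degree two. So each increment is a sum of level-$j$ cellwise measures with vanishing low moments, and Lemma~\ref{lem-green-cell} gives $\norm{\cG(P_{j+1}-P_j)\sigma}_{L^1}\leq C2^{-2j}A$.
\item The level-$j$ block lies in a ball of that radius, in the $L^1$ norm, inside $\cG(\operatorname{Im}P_j+\operatorname{Im}P_{j+1})$, a space of dimension at most $C2^{dj}$.
\item Cover that block by a volumetric net at accuracy $\delta_j=\frac{\e}{8}2^{-(k-j)}$. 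With $\ell=k-j$, the radius-to-accuracy ratio is $O(2^{3\ell})$, so the cost is $C2^{dk}2^{-d\ell}(1+\ell)$ nats. This is summable in $\ell$, and $\sum_j\delta_j\leq\e/4$.
\end{itemize}
Coarse levels need high relative precision but have few coefficients. The $\asymp\e^{-d/2}$ finest coefficients need only $O(1)$ relative precision, because their $\cG$-images are already of size $O(\e)$ in $L^1$. This geometric weighting of accuracies across levels is exactly what your single-scale rounding lacks.
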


\begin{proof}
Choose finitely many coordinate cubes whose interiors cover $M$ and whose closures lie in their charts. Assign every point to the first cube containing it, in a fixed ordering, to obtain a Borel partition $\mathscr{Q}_0$. Subdivide each original coordinate cube dyadically, using half-open faces with a fixed boundary convention, and intersect these cubes with its assigned Borel set. This gives nested Borel partitions $\mathscr{Q}_j$. Each nonempty cell $Q$ is paired with its containing closed coordinate cube $\widetilde Q$, of diameter at most $C_M2^{-j}$. Each cell has at most $2^d$ children, and their containing cubes lie in $\widetilde Q$. The cells need not themselves be cubes: interpolation nodes below lie in $\widetilde Q$, and the Green estimate will be applied on this containing cube.

Let $\delta_x$ denote the Dirac probability measure at $x$. Identify $\widetilde Q$ affinely with $[0,1]^d$, let $L_0,L_1,L_2$ be the one-variable Lagrange polynomials at $0,1/2,1$, and let $z_{Q,b}$ be the node with normalized coordinates $b/2$ for $b\in\{0,1,2\}^d$. For $x\in Q$, first define the local replacement on Dirac masses by
\begin{equation}
R_Q\delta_x:=\sum_{b\in\{0,1,2\}^d}
\left(\prod_{r=1}^dL_{b_r}(x_r)\right)\delta_{z_{Q,b}},
\label{eq-lagrange-replacement}
\end{equation}
where $x_r$ are the normalized coordinates of $x$.  For a finite signed measure $\tau$ supported in $Q$, extend \eqref{eq-lagrange-replacement} by
\begin{equation*}
R_Q\tau:=\int_QR_Q\delta_x\,d\tau(x).
\end{equation*}
Explicitly, $L_0(t)=2(t-1/2)(t-1)$, $L_1(t)=4t(1-t)$, and $L_2(t)=2t(t-1/2)$, so $L_a(b/2)=\delta_{ab}$. 
Applying the one-variable interpolation identity successively in each coordinate gives 
\begin{equation*}
 p(x)=\sum_{b\in\{0,1,2\}^d}p(z_{Q,b})\prod_{r=1}^dL_{b_r}(x_r) 
\end{equation*}
for every coordinate polynomial $p$ whose degree in each variable is at most two.  
Integrating this finite sum against $\tau$, and using the defining property of Dirac measures in \eqref{eq-lagrange-replacement}, gives
\begin{equation*}
\int_{\widetilde Q}p\,d(R_Q\tau)=\int_Qp\,d\tau. 
\end{equation*}
Moreover, boundedness of the Lagrange basis on $[0,1]^d$ gives
\begin{equation*}
\norm{R_Q\tau}_{\TV}\leq C_d\norm{\tau}_{\TV}.
\end{equation*}

Let $\cM(M)$ be the space of finite signed Borel measures and define
\begin{equation*}
P_j:\cM(M)\longrightarrow\cM(M),\qquad
P_j\sigma:=\sum_{Q\in\mathscr{Q}_j}R_Q(\sigma\vert_Q).
\end{equation*}
Thus the image $\operatorname{Im}P_j$ consists of atomic measures supported on the finitely many interpolation nodes of level-$j$ cells.  In particular,
\begin{equation}
\dim\operatorname{Im}P_j\leq C2^{dj},\qquad
\norm{P_j\sigma}_{\TV}\leq C\norm{\sigma}_{\TV}.
\label{eq-pj-rank}
\end{equation}

We next spell out the cellwise cancellation.  If $\operatorname{ch}(Q)$ is the set of children of $Q\in\mathscr{Q}_j$, put
\begin{equation*}
\rho_{j,Q}:=
\sum_{Q'\in\operatorname{ch}(Q)}R_{Q'}(\sigma\vert_{Q'})
-R_Q(\sigma\vert_Q).
\end{equation*}
Both terms in this difference have the same moments as $\sigma\vert_Q$ through degree two, since child and parent coordinates differ affinely. Hence $\rho_{j,Q}$ is supported in $\widetilde Q$ and satisfies \eqref{eq-moment-cancellation} on that cube. Also,
\begin{equation}
(P_{j+1}-P_j)\sigma=\sum_{Q\in\mathscr{Q}_j}\rho_{j,Q},
\qquad
\sum_{Q\in\mathscr{Q}_j}\norm{\rho_{j,Q}}_{\TV}
\leq C_d\norm{\sigma}_{\TV}.
\label{eq-block-cell-decomposition}
\end{equation}
At the terminal level $k$ one similarly has
\begin{equation*}
(I-P_k)\sigma=
\sum_{Q\in\mathscr{Q}_k}(I-R_Q)(\sigma\vert_Q),
\end{equation*}
and the summands have the same moment cancellation on $\widetilde Q$ and the same total variation bound. Applying Lemma~\ref{lem-green-cell} on each containing cube to the summands in \eqref{eq-block-cell-decomposition}, and then using the triangle inequality, gives
\begin{equation}
\norm{\cG(P_{j+1}-P_j)\sigma}_{L^1}\leq C2^{-2j}\norm{\sigma}_{\TV},
\qquad
\norm{\cG(I-P_k)\sigma}_{L^1}\leq C2^{-2k}\norm{\sigma}_{\TV}.
\label{eq-green-blocks}
\end{equation}

Now write $\sigma=\Delta f$ and define
\begin{equation*}
\overline f:=\frac{1}{\vol(M)}\int_Mf\,dV_M.
\end{equation*}
Since $\sigma(M)=0$ and $\cG$ is the inverse of $\Delta$ on mean-zero distributions, $f-\overline f=\cG\sigma$.  The telescoping identity for the $P_j$ therefore gives
\begin{equation}
f=\overline f+\cG P_0\sigma+
\sum_{j=0}^{k-1}\cG(P_{j+1}-P_j)\sigma+
\cG(I-P_k)\sigma.
\label{eq-green-decomposition}
\end{equation}
Choose the least nonnegative integer $k$ for which $C_MA2^{-2k}\leq\e/4$, with $C_M$ valid in \eqref{eq-green-blocks}. This choice is uniform in $f$ and bounds the final term of \eqref{eq-green-decomposition} by $\e/4$. Minimality gives, for sufficiently small $\e$,
\begin{equation*}
C_MA2^{-2k}\leq\e/4,
\qquad 2^{dk}\leq C_{M,A}\e^{-d/2}.
\end{equation*}
Approximate the level-$j$ block with accuracy
\begin{equation*}
\delta_j:=\frac{\e}{8}2^{-(k-j)}.
\end{equation*}
The sum of these accuracies is at most $\e/4$.

An $m$-dimensional normed ball of radius $R$ has a $\delta$-net of cardinality at most $(1+2R/\delta)^m$, by comparing the disjoint $\delta/2$-balls around a maximal separated set with the ball of radius $R+\delta/2$.  Applying this estimate with \eqref{eq-pj-rank} and \eqref{eq-green-blocks}, and writing $\ell=k-j$, gives
\begin{equation*}
\log N_{\delta_j}\leq C_{M,A}2^{dk}2^{-d\ell}(1+\ell),
\qquad \ell:=k-j.
\end{equation*}
Indeed, the block lies in the image under $\cG$ of $\operatorname{Im}P_j+\operatorname{Im}P_{j+1}$, whose dimension is at most $C_M2^{dj}=C_M2^{dk}2^{-d\ell}$, and its radius divided by $\delta_j$ is at most $C_{M,A}2^{3\ell}$.  The series $\sum_{\ell\geq1}2^{-d\ell}(1+\ell)$ converges.  The mean $\overline f$ ranges over a bounded interval and $\cG P_0\sigma$ ranges over a fixed-dimensional bounded set, so their logarithmic covering costs are $O_{M,A,D}(\log(1/\e))$.  They are absorbed by $C_{M,A,D}\e^{-d/2}$.  Together with the two error allocations of size $\e/4$, this proves \eqref{eq-elliptic-cover}.  The geometrically weighted accuracies $\delta_j$ are precisely what prevent a terminal-scale logarithm.
\end{proof}

The following constant-weight packing is obtained by Gilbert's greedy deletion argument \cite[Theorem~1, p.~507]{Gil52}; compare the bound of Varshamov \cite{Var57}. 

\begin{lem}
\label{lem-constant-weight-code}
There are universal constants $\gamma,c>0$ such that, for every sufficiently large integer $N$, one can find a set $\mathscr{C}_N\subset\{0,1\}^N$ with the following properties: every $\theta\in\mathscr{C}_N$ has exactly $\lfloor N/2\rfloor$ entries equal to one,
\begin{equation*}
d_H(\theta,\theta'):=\#\{i:\theta_i\neq\theta_i'\}\geq\gamma N
\end{equation*}
for distinct $\theta,\theta'\in\mathscr{C}_N$, and $\log\#\mathscr{C}_N\geq cN$.
\end{lem}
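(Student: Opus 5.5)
We prove Lemma~\ref{lem-constant-weight-code} by the greedy (Gilbert) argument inside the constant-weight slice. Let $w=\lfloor N/2\rfloor$ and $S_w=\{\theta\in\{0,1\}^N:\#\{i:\theta_i=1\}=w\}$, so that $\#S_w=\binom{N}{w}$. We select codewords one at a time. Start from any element of $S_w$. After each choice, delete from $S_w$ every word at Hamming distance less than $\gamma N$ from it, and choose the next codeword among the words that remain. The process stops when $S_w$ is exhausted. Any two chosen words are then at distance at least $\gamma N$. Each step deletes at most $\#\{\theta'\in S_w:d_H(\theta,\theta')<\gamma N\}$ words, and this number does not depend on $\theta$, by the symmetric-group action on coordinates. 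Hence
\begin{equation*}
\#\mathscr{C}_N\geq\binom{N}{w}\Big/\#\{\theta'\in S_w:d_H(\theta,\theta')<\gamma N\}.
\end{equation*}

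The main step is to bound the two counts by binary entropy. Write $h(p)=-p\log p-(1-p)\log(1-p)$. Stirling's formula gives $\binom{N}{w}\geq e^{N\log 2}/(N+1)$. In fact $\binom{N}{w}$ is the largest of the $N+1$ binomial coefficients, which sum to $2^N$.

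For the ball, we use the structure of constant-weight words. Two words of equal weight differ in an even number $2s$ of positions: $s$ ones of $\theta$ become zeros, and $s$ zeros become ones. So the ball is contained in the set of all words differing from $\theta$ in at most $\gamma N$ positions. That set has cardinality
\begin{equation*}
\sum_{t<\gamma N}\binom{N}{t}\leq e^{Nh(\gamma)}
\end{equation*}
for $\gamma\leq1/2$, by the standard Chernoff/entropy bound.

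Combining the two estimates gives
\begin{equation*}
\log\#\mathscr{C}_N\geq N(\log2-h(\gamma))-\log(N+1).
\end{equation*}
Fix $\gamma>0$ small enough that $h(\gamma)\leq\tfrac12\log2$, for instance $\gamma=1/20$. Then for large $N$ we obtain $\log\#\mathscr{C}_N\geq\tfrac14 N\log2$, so the lemma holds with $c=\tfrac14\log 2$.

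The only point needing care is the entropy bound for the tail sum $\sum_{t\leq\gamma N}\binom Nt\leq e^{Nh(\gamma)}$. It follows from
\begin{equation*}
1=\sum_{t}\binom Nt\gamma^t(1-\gamma)^{N-t}\geq\sum_{t\leq\gamma N}\binom Nt\gamma^{\gamma N}(1-\gamma)^{(1-\gamma)N},
\end{equation*}
which uses the fact that $\gamma^t(1-\gamma)^{N-t}$ decreases in $t$ when $\gamma\leq1/2$.
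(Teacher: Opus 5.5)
Your proof is correct and matches the paper's argument essentially step for step: Gilbert's greedy deletion inside the weight-$\lfloor N/2\rfloor$ layer, the bound $2^N/(N+1)$ on the layer size, and the binomial-theorem estimate $\sum_{j\le\gamma N}\binom Nj\le e^{Nh(\gamma)}$ for the deleted balls, with $\gamma$ chosen so that $h(\gamma)$ is at most $(\log 2)/2$. One small correction: the layer bound comes from the central coefficient being the largest of the $N+1$ binomial coefficients, as you go on to say, not from Stirling's formula.
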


\begin{proof}
The central binomial coefficient is the largest of the $N+1$ coefficients whose sum is $2^N$. Thus the layer of words of Hamming weight $\lfloor N/2\rfloor$ contains at least $2^N/(N+1)$ elements. Starting with this layer, choose one word and delete all remaining words at Hamming distance less than $\gamma N$; then repeat. Each chosen word deletes at most
\begin{equation*}
\sum_{j\leq\gamma N}\binom Nj\leq\exp(H(\gamma)N)
\end{equation*}
words, where $H(\gamma)=-\gamma\log\gamma-(1-\gamma)\log(1-\gamma)$ and $0<\gamma\leq1/2$. Indeed, for $j\leq\lfloor\gamma N\rfloor$, monotonicity of $(\gamma/(1-\gamma))^j$ gives $\gamma^j(1-\gamma)^{N-j}\geq e^{-NH(\gamma)}$. The binomial theorem therefore gives
\begin{equation*}
1\geq\sum_{j=0}^{\lfloor\gamma N\rfloor}\binom Nj\gamma^j(1-\gamma)^{N-j}
\geq e^{-NH(\gamma)}\sum_{j=0}^{\lfloor\gamma N\rfloor}\binom Nj.
\end{equation*}
Choose $\gamma>0$ so small that $H(\gamma)<(\log2)/2$. Dividing the initial layer size by the maximal deleted-ball size proves that at least $\exp(cN)$ words are selected for all sufficiently large $N$.
\end{proof}

\begin{prop}
\label{prop-l1-entropy}
The conclusions of Theorem~\ref{thm-l1-main} hold.
\end{prop}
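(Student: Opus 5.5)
The proof has four parts: the closure identity \eqref{eq-l1-closure}, the upper bound, the lower bound, and the transfer of the two-sided estimate to the closure.

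\textbf{Closure identity.} For $\overline{\cH_\omega}^{\,L^1}\supseteq\PSH_0(X,\omega)$, take $u\in\PSH_0(X,\omega)$ and apply Theorem~\ref{thm-smooth-decreasing} to get smooth $v_j\downarrow u$ with $\omega_{v_j}>0$.
\begin{itemize}
\item Monotone convergence gives $v_j\to u$ in $L^1(\nu)$.
\item By \eqref{eq-sup-continuity}, $\sup_Xv_j\to0$.
\item Hence $v_j-\sup_Xv_j\in\cH_\omega$ also converges to $u$ in $L^1$.
\end{itemize}
For the reverse inclusion, an $L^1$ limit of elements of $\cH_\omega$ is, by the closedness result of \cite{Dem} applied chartwise to $u+\rho$, a.e.\ equal to an $\omega$-psh function. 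That function has supremum $0$ by \eqref{eq-sup-continuity}. Since both sets are compact and the closure has the same entropy up to replacing $\e$ by $\e/2$ (centers can be moved into the set at a factor of $2$), the closure statement follows from the estimate for $\cH_\omega$. In fact the upper bound is cleanest to prove directly on $\PSH_0$, since $\cH_\omega\subset\PSH_0$.

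\textbf{Upper bound.} Apply Lemma~\ref{lem-elliptic-cover} with $M=X$, viewed as a Riemannian manifold of real dimension $d=2n$, and $dV_M$ a constant multiple of $\nu$.
\begin{itemize}
\item For $u\in\PSH_0$, taking the trace of $\omega+\ddc u\geq0$ against $\omega$ shows that $\Delta u+c_n$ is a positive measure, up to sign conventions. Its total mass is $c_n\vol(X)$ because $\int\Delta u=0$.
\item Therefore $\norm{\Delta u}_{\TV}\leq 2c_n\vol(X)=:A$.
\item By \eqref{eq-mean-sup}, $-C_X\leq\int u\,d\nu\leq0$, which gives the mean bound $D$.
\end{itemize}
Lemma~\ref{lem-elliptic-cover} then yields $H_\e\leq C\e^{-d/2}=C\e^{-n}$. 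Its centers are arbitrary $L^1$ functions. Passing to internal centers costs only a change $\e\mapsto\e/2$.

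\textbf{Lower bound.} I would construct a packing.
\begin{itemize}
\item Cover a fixed coordinate ball by $N\asymp h^{-2n}$ disjoint balls $B_i$ of radius $h$.
\item Fix a smooth bump $\chi\geq0$ supported in the unit ball, and set $\phi_i(z)=h^2\chi((z-z_i)/h)$.
\item Then $\ddc\phi_i=O(1)$ uniformly in $h$. So for a small fixed amplitude $a$, every $u_\theta=a\sum_i\theta_i\phi_i$ with $\theta\in\{0,1\}^N$ satisfies $\omega_{u_\theta}>0$ and is smooth.
\item Normalize by subtracting the supremum. With the constant-weight codes of Lemma~\ref{lem-constant-weight-code}, the supremum is $a h^2\max\chi$ for every nonzero codeword, so the subtraction is a common constant for all codewords.
\item For distinct $\theta,\theta'$ in $\mathscr{C}_N$,
\[
d_{L^1}(u_\theta,u_{\theta'})\geq \gamma N\cdot a h^2\cdot c h^{2n}\geq c' h^2 .
\]
\end{itemize}
Choose $h\asymp\sqrt\e$ so that this separation exceeds $2\e$. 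Balls of radius $\e$ then contain at most one codeword each, so
\[
H_\e\geq\log\#\mathscr{C}_N\geq cN\asymp\e^{-n}.
\]
The same packing lies in $\PSH_0$, which transfers the lower bound to the closure.

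\textbf{Main obstacle.} The only nontrivial step is checking that the hypotheses of Lemma~\ref{lem-elliptic-cover} hold uniformly: the total-variation bound on $\Delta u$ for singular $u\in\PSH_0$, as opposed to smooth $u$. I would handle it by proving the bound for the smooth approximants $v_j$ and passing to the limit using weak lower semicontinuity of total variation. Alternatively, I would run the covering on $\cH_\omega$ and take closures.
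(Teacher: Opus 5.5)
Your proposal is correct and follows the paper's proof essentially step for step. The steps are the same: the total-variation bound on the Laplacian from positivity plus the mean bound \eqref{eq-mean-sup} fed into Lemma~\ref{lem-elliptic-cover}; a packing of $h^2$-scaled bumps indexed by constant-weight codewords with $h\asymp\sqrt\e$; decreasing smooth regularization and \eqref{eq-sup-continuity} for the closure; and transfer of both bounds between $\cH_\omega$ and $\PSH_0(X,\omega)$.

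The only differences are cosmetic. The paper takes a nonpositive bump, so that $\sup_Xu_\theta=0$ holds automatically, whereas you subtract the common supremum $ah^2\max\chi$. Also, $\cH_\omega$ is only relatively compact in $L^1$, not compact, but your argument never uses that remark.
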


\begin{proof}
For $u\in\PSH(X,\omega)$, define the distributional Laplacian by
\begin{equation*}
(\Delta_\omega u)\omega^n=n\ddc u\wedge\omega^{n-1}.
\end{equation*}
Positivity gives the measure identity
\begin{equation}
(n+\Delta_\omega u)\omega^n
=n\omega_u\wedge\omega^{n-1},\qquad
\int_X(n+\Delta_\omega u)\omega^n=nV.
\label{eq-positive-trace}
\end{equation}
The second equality in \eqref{eq-positive-trace} follows from Stokes' theorem, first for smooth regularizations and then by weak convergence.  Since the measure on the right is positive and has mass $nV$, \eqref{eq-positive-trace} implies
\begin{equation}
\norm{(\Delta_\omega u)\omega^n}_{\TV}\leq2nV.
\label{eq-laplacian-tv}
\end{equation}
If $\sup_Xu=0$, then \eqref{eq-mean-sup} gives
\begin{equation}
-C_X\leq\int_Xu\,d\nu\leq0.
\label{eq-normalized-mean}
\end{equation}
Lemma~\ref{lem-elliptic-cover}, applied with $d=2n$ using \eqref{eq-laplacian-tv} and \eqref{eq-normalized-mean}, proves the upper bound in \eqref{eq-l1-main}.

We give all geometric details of the lower bound.  Choose a holomorphic chart $z:U\to B(0,2)\subset\R^{2n}$ and a nonzero function $\psi\in C_c^\infty(B(0,1))$ with $\psi\leq0$.  On the smaller chart the normalized volume has the form $d\nu=J(z)\,dz$, where compactness gives constants $0<c_0<C_0$ such that
\begin{equation*}
c_0\leq J(z)\leq C_0.
\end{equation*}
For every sufficiently small grid size $h$, choose sites $x_1,\ldots,x_N$ whose coordinate balls $B(z(x_i),h)$ are disjoint and contained in $B(0,1)$.  A maximal grid gives constants $c_1,C_1>0$ with
\begin{equation*}
c_1h^{-2n}\leq N\leq C_1h^{-2n}.
\end{equation*}
For a fixed amplitude $a>0$, define the smooth function
\begin{equation*}
\varphi_{i,h}(x):=
\begin{cases}
ah^2\psi\bigl((z(x)-z(x_i))/h\bigr),&x\in U,\\
0,&x\notin U.
\end{cases}
\end{equation*}
Because the supports are disjoint and $\norm{\ddc\varphi_{i,h}}_{C^0}\leq C_Xa$, choosing $a$ once and for all sufficiently small gives
\begin{equation*}
u_\theta:=\sum_{i=1}^N\theta_i\varphi_{i,h},
\qquad \omega_{u_\theta}\geq\omega/2
\end{equation*}
for every binary word $\theta=(\theta_1,\ldots,\theta_N)$.  Each $u_\theta$ is nonpositive and vanishes outside the bump supports, so $\sup_Xu_\theta=0$ and $u_\theta\in\cH_\omega$.

If $I(\theta,\theta'):=\{i:\theta_i\neq\theta_i'\}$, disjointness of the supports and the lower bound for $J$ give
\begin{align}
d_{L^1}(u_\theta,u_{\theta'})
&=\sum_{i\in I(\theta,\theta')}
\int_X\abs{\varphi_{i,h}}\,d\nu\notag\\
&\geq c_Xah^{2n+2}\#I(\theta,\theta')
=c_Xah^{2n+2}d_H(\theta,\theta').
\label{eq-l1-hamming}
\end{align}
Apply Lemma~\ref{lem-constant-weight-code}.  Its distinct codewords differ at at least $\gamma N$ sites, so \eqref{eq-l1-hamming} and the lower bound for $N$ imply
\begin{equation*}
d_{L^1}(u_\theta,u_{\theta'})\geq c_Xah^2.
\end{equation*}
Now fix $0<\e<\e_X$ and choose an admissible grid scale $h$ satisfying
\begin{equation*}
2\e<c_Xah^2\leq8\e.
\end{equation*}
After decreasing $\e_X$, such a dyadic grid scale exists.  The code potentials form a $2\e$-packing, while
\begin{equation*}
\log\#\mathscr{C}_N\geq cN\geq c_Xh^{-2n}\geq c_X\e^{-n}.
\end{equation*}
Every open $\e$-ball contains at most one packing point, which proves the lower bound in \eqref{eq-l1-main}.

It remains to identify the closure.  Given $u\in\PSH_0(X,\omega)$, choose $v_j$ as in Theorem~\ref{thm-smooth-decreasing} and put $s_j:=\sup_Xv_j$.  Since $v_j\downarrow u$, one has $s_j\downarrow0$: if the limit were positive, a cluster point of maximizers would give a point where $u$ is positive.  Thus $v_j-s_j\in\cH_\omega$ and converges to $u$ in $L^1$.  Conversely, an $L^1$ limit of normalized $\omega$-psh functions is $\omega$-psh by \cite[Chapter~I, Proposition~5.9]{Dem}, and \eqref{eq-sup-continuity} preserves the normalization.  This proves \eqref{eq-l1-closure}. 
The smooth packing lies in both classes, while the largest class has the preceding cover, so the two-sided estimate holds for each class.
\end{proof}

\section{The \texorpdfstring{$d_1$}{d1} lower bound in arbitrary dimension}
\label{sec-lower}

\begin{prop}
\label{prop-d1-lower}
The lower bounds \eqref{eq-d1-lower-main}--\eqref{eq-d1-lower-range} hold for both $\cK_B^{\cH}$ and $\cK_B$.
\end{prop}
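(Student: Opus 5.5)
We reuse the packing from the proof of Proposition~\ref{prop-l1-entropy}, now with a variable amplitude $a\in(0,a_0]$ tied to $B$. Fix the chart, the bump $\psi\leq0$, the grid of $N\asymp h^{-2n}$ disjoint coordinate balls, and set $u_\theta=\sum_i\theta_i\varphi_{i,h}$ with $\varphi_{i,h}=ah^2\psi((z-z(x_i))/h)$. We restrict $\theta$ to the constant-weight code of Lemma~\ref{lem-constant-weight-code}. Each $u_\theta$ lies in $\cH_\omega$ with $\sup_Xu_\theta=0$. The constant-weight restriction is not needed for the entropy, but it keeps the construction uniform. Since $\cK_B^{\cH}\subset\cK_B$ and the packing points are smooth, a packing lower bound for $\cK_B^{\cH}$ also gives one for $\cK_B$: every open $\e$-ball with center in $\cK_B$ contains at most one point of a $2\e$-separated set. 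So it suffices to build a $2\e$-packing inside $\cK_B^{\cH}$.

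\textbf{Entropy step.} Write $\MA(u_\theta)=f_\theta\nu$. Off the bump supports $f_\theta=1$. On the $i$-th ball, $f_\theta=\det(\omega+a\,\ddc\Psi_i)/\det\omega$ with $\Psi_i$ the rescaled bump, which has $C^2$ size $O(1)$. Hence $f_\theta=1+a g_i+O(a^2)$ there, with $\abs{g_i}\leq C$ uniformly. Using $s\log s-s+1\leq(s-1)^2$ for $s$ near $1$ and $\int f_\theta\,d\nu=1$, we get
\begin{equation*}
\Ent(\MA(u_\theta)\mid\nu)=\int(f_\theta\log f_\theta-f_\theta+1)\,d\nu\leq C\int(f_\theta-1)^2\,d\nu\leq C_Xa^2\sum_i\theta_i\nu(\text{ball}_i)\leq C_Xa^2,
\end{equation*}
because the balls are disjoint and lie in a fixed chart. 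This is the Fisher-information remark from the introduction; the bound uses only smoothness and involves no cancellation. Choose $a=\min\{a_0,\sqrt{B/C_X}\}\asymp\min\{1,\sqrt B\}$. Then every $u_\theta$ lies in $\cK_B^{\cH}$.

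\textbf{Separation step.} By \eqref{eq-d1-i1}, $d_1\geq 2^{-2n-6}I_1$, and
\begin{equation*}
I_1(u_\theta,u_{\theta'})\geq\int\abs{u_\theta-u_{\theta'}}\,\MA(u_\theta).
\end{equation*}
Since $\omega_{u_\theta}\geq\omega/2$, we have $\MA(u_\theta)\geq2^{-n}\nu$. The computation \eqref{eq-l1-hamming} therefore gives $d_1(u_\theta,u_{\theta'})\geq c_Xah^{2n+2}d_H(\theta,\theta')\geq c_Xah^2$ on the code. Now choose the dyadic scale $h$ with $2\e<c_Xah^2\leq8\e$. This needs $h$ below the admissible maximal grid scale, i.e.\ $\e\leq\e_Xa$, which is exactly the range \eqref{eq-d1-lower-range}. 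Then
\begin{equation*}
\log\#\mathscr C_N\geq cN\geq c_Xh^{-2n}\geq c_X(a/\e)^n\geq c_X\min\{1,B^{n/2}\}\e^{-n}.
\end{equation*}
No polarization is used anywhere.

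\textbf{Main obstacle.} The only delicate point is the uniformity of the entropy expansion: the $O(a^2)$ remainder in $f_\theta$ and the quadratic bound for $s\log s-s+1$ must hold with constants independent of $h$ and $\theta$. This follows from the scale invariance of $\ddc$ of $h^2\psi(\cdot/h)$, from the smooth variation of the metric coefficients over balls of radius $h$, and from $\abs{f_\theta-1}\leq1/2$ once $a\leq a_0$. All remaining ingredients are already in place from Section~\ref{sec-l1}.
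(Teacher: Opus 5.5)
Your proposal is correct and follows essentially the same route as the paper: the same bump packing with amplitude $a\asymp\min\{1,\sqrt B\}$, the quadratic Taylor bound for $s\log s-s+1$ giving entropy $O(a^2)$, separation via $d_1\geq 2^{-2n-6}I_1$ together with the lower density bound, and the choice of scale $ah^2\asymp\e$ on the constant-weight code. The only cosmetic difference is that you bound $\int(f_\theta-1)^2\,d\nu$ bump by bump, whereas the paper uses the uniform pointwise bound $\abs{\rho_\theta-1}\leq C_Xa$ directly.
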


\begin{proof}
Use the chart, the nonpositive bump $\psi$, and the disjoint sites from the proof of Proposition~\ref{prop-l1-entropy}.  Fix a geometric constant $a_0>0$ and, for the given entropy budget, set the Hessian amplitude
\begin{equation}
a:=a_0\min\{1,\sqrt B\}.
\label{eq-ab-definition}
\end{equation}
For every sufficiently small $h$, the number $N$ of disjoint balls satisfies $c_Xh^{-2n}\leq N\leq C_Xh^{-2n}$.  For a binary word $\theta=(\theta_1,\ldots,\theta_N)$ set
\begin{equation}
u_\theta(x):=\sum_{j=1}^N\theta_j ah^2
\psi\left(\frac{z(x)-z(x_j)}{h}\right),
\label{eq-d1-bumps}
\end{equation}
where each summand is extended by zero outside its coordinate support.  All potentials in \eqref{eq-d1-bumps} use the same sites and scale.  Choose $a_0$ so small that
\begin{equation}
\frac12\omega\leq\omega_{u_\theta}\leq\frac32\omega
\label{eq-uniform-kahler}
\end{equation}
for every word and every sufficiently small scale.  If $\rho_\theta:=\omega_{u_\theta}^n/\omega^n$, smoothness of the determinant on the compact matrix interval in \eqref{eq-uniform-kahler} gives
\begin{equation}
2^{-n}\leq\rho_\theta\leq(3/2)^n,
\qquad \abs{\rho_\theta-1}\leq C_Xa.
\label{eq-density-bounds}
\end{equation}
The first two bounds follow by comparing the eigenvalues of $\omega_{u_\theta}$ with those of $\omega$; the last follows from the first derivative bound for the determinant on the matrix segment joining the identity to $\omega^{-1}\omega_{u_\theta}$.  The density equals one off the bump supports and has mean one.

We record the Taylor estimate explicitly.  Let $F(s):=s\log s-s+1$.  Then $F(1)=F'(1)=0$ and $F''(s)=1/s$.  For $s\in[2^{-n},(3/2)^n]$, Taylor's formula with integral remainder gives
\begin{align*}
F(s)
&=(s-1)^2\int_0^1(1-t)F''(1+t(s-1))\,dt\\
&\leq2^{n-1}(s-1)^2.
\end{align*}
Since $\int_X(\rho_\theta-1)d\nu=0$, the definition of relative entropy, this Taylor estimate, and \eqref{eq-density-bounds} yield
\begin{align}
\Ent(\mu_{u_\theta}\mid\nu)
&=\int_X(\rho_\theta\log\rho_\theta-\rho_\theta+1)d\nu\notag\\
&\underalign{\eqref{eq-density-bounds}}{\leq} C_Xa^2
\underalign{\eqref{eq-ab-definition}}{\leq} B.
\label{eq-bump-kl}
\end{align}
Thus, \eqref{eq-bump-kl} places every code potential in $\cK_B^{\cH}$.
The last inequality follows after decreasing $a_0$ once: for $B<1$ it uses $a^2=a_0^2B$, while for $B\geq1$ it uses $a^2=a_0^2\leq B$.

Let $I(\theta,\theta'):=\{j:\theta_j\neq\theta_j'\}$ and put $k:=\#I(\theta,\theta')=d_H(\theta,\theta')$.  On the support of the $j$-th differing bump, $\abs{u_\theta-u_{\theta'}}=ah^2\abs{\psi((z-z(x_j))/h)}$.  The lower density bound in \eqref{eq-density-bounds}, the coordinate Jacobian bound, and disjointness of the supports therefore give
\begin{align}
I_1(u_\theta,u_{\theta'})
&=\int_X\abs{u_\theta-u_{\theta'}}
(\mu_{u_\theta}+\mu_{u_{\theta'}})\notag\\
&\geq c_X\sum_{j\in I(\theta,\theta')}
ah^2\int_{B(z(x_j),h)}
\abs{\psi((z-z(x_j))/h)}\,dz\notag\\
&=c_Xakh^{2n+2}.
\label{eq-bump-i1}
\end{align}
The right-hand inequality in \eqref{eq-d1-i1}, namely $I_1\leq2^{2n+6}d_1$, changes \eqref{eq-bump-i1} into
\begin{equation}
d_1(u_\theta,u_{\theta'})
\geq 2^{-2n-6}c_Xakh^{2n+2}.
\label{eq-bump-d1}
\end{equation}
Restrict to the code $\mathscr{C}_N$ from Lemma~\ref{lem-constant-weight-code}.  Then $k\geq\gamma N$, so \eqref{eq-bump-d1} and $N\geq c_Xh^{-2n}$ separate distinct code potentials by $c_Xah^2$, and $\log\#\mathscr{C}_N\geq c_Xh^{-2n}$.

Now fix $\e$ in the range \eqref{eq-d1-lower-range} and choose an admissible dyadic scale $h$ such that the lower separation is greater than $2\e$ and $ah^2\leq C_X\e$.  Such a scale exists because \eqref{eq-d1-lower-range} ensures that $\e/a$ is below the fixed chart scale.  Equations \eqref{eq-ab-definition} and \eqref{eq-bump-d1} give
\begin{equation*}
\log\#\mathscr{C}_N\geq c_X(a/\e)^n
=c_Xa_0^n\min\{1,B^{n/2}\}\e^{-n}.
\end{equation*}
Every $\e$-ball contains at most one code potential, so this proves \eqref{eq-d1-lower-main} for the smooth class.  For a smooth potential, the non-pluripolar measure \eqref{eq-nonpluripolar-ma} equals $V^{-1}\omega_u^n$; hence $\cK_B^{\cH}\subset\cK_B$.  The same separated family lies in the finite-energy class, so its packing gives the same lower bound even when covering centers may be chosen in that larger class.
\end{proof}

\section{Uniform Bergman approximation on entropy sublevels}
\label{sec-approximation}
Assume $\omega=c_1(L,h_0)$ from now through Section~\ref{sec-coding}, where $L$ is an ample holomorphic line bundle and $h_0$ is its fixed smooth positively curved Hermitian metric. In a local holomorphic frame $e$ of $L$, define the real weight $\phi_0$ by $\abs{e}_{h_0}^2=e^{-\phi_0}$. We use $\ddc=\frac{\sqrt{-1}}{2\pi}\partial\bar\partial$, so $\ddc\phi_0=\omega$. Recall that $\mu_u=\MA(u)$.


Darvas--Lu--Rubinstein \cite[Theorem~1.2 (i)]{DLR20} established
Bergman approximation in the finite-energy $d_p$ topology.
The purpose of this section is to prove a uniform quantitative estimate:
for every $u\in\cK_B$, the $d_1$ error of the Bergman approximation
used here is bounded by $C_X(1+B)/k$.
This approximation incorporates a fixed auxiliary twist and is not
identical to the operator in \cite{DLR20}.
Two energy comparisons supply the estimate. We first explain their
antecedents and then prove the quantitative versions needed here. 

\subsection{Energy formulas and entropy control}
For smooth semipositive potentials $u,v$, the energy difference formula is
\begin{equation}
 E(v)-E(u)=\frac1{(n+1)V}\sum_{j=0}^n
 \int_X(v-u)\omega_v^j\wedge\omega_u^{n-j}.
 \label{eq-gu-energy-difference}
\end{equation}
Energy is increasing and concave. In particular,
\begin{equation}
 E(v)-E(u)\leq\int_X(v-u)d\mu_u.
 \label{eq-gu-concavity}
\end{equation}
These identities are \cite[Proposition~2.1]{BBGZ13}; decreasing extension gives \eqref{eq-gu-concavity} in $\cE^1$ when the integral is finite \cite[Proposition~2.4]{BBGZ13}. Write $S=\{\tau\in\mathbb C:0<\mathrm{Re}\,\tau<1\}$ and $\pi_X:X\times S\to X$, $(x,\tau)\mapsto x$. A subgeodesic $u_t$ is a family whose imaginary-time-independent extension $U(x,\tau)=u_{\mathrm{Re}\,\tau}(x)$ satisfies $\pi_X^*\omega+\ddc U\geq0$ on $X\times S$. The fiber-integral formula \cite[Proposition~6.2]{BBGZ13} makes $E(u_t)$ convex for a smooth subgeodesic and affine for a bounded weak geodesic. The latter solves the homogeneous Monge--Amp\`ere equation with its prescribed boundary values. Between smooth strictly positive endpoints its first derivatives are bounded and
\begin{equation*}
 \frac d{dt}E(u_t) \bigg\vert_{0+}=\int_X\dot u_0d\mu_{u_0}.
\end{equation*}
See \cite[Section~2.2]{DLR20}; bounded derivatives permit endpoint variation in \eqref{eq-gu-energy-difference}.

The rooftop formula \cite[Corollary~4.14]{Dar15} implies
\begin{equation}
 u\leq v\quad\Longrightarrow\quad d_1(u,v)=E(v)-E(u).
 \label{eq-gu-ordered}
\end{equation}
Indeed the rooftop envelope of ordered endpoints is the smaller endpoint. For arbitrary $u,v$, apply the triangle inequality through $z=\max(u,v)$, and then \eqref{eq-gu-concavity} at each endpoint:
\begin{align*}
 d_1(u,v)&\leq E(z)-E(u)+E(z)-E(v)\\
 &\leq\int_X(v-u)_+d\mu_u+\int_X(u-v)_+d\mu_v
 \leq I_1(u,v).
\end{align*}
Here $I_1(u,v)=\int_X\abs{u-v}(d\mu_u+d\mu_v)$, as in Theorem~\ref{thm-darvas-inputs}; that theorem also gives $I_1(u,v)\leq C_nd_1(u,v)$.

For $v\in\PSH_0(X,\omega)$, the function $-\alpha_Xv$ is nonnegative Borel, and \eqref{eq-alpha-integrability} bounds its exponential integral. Lemma~\ref{lem-measurable-entropy} therefore applies, even if $v$ is discontinuous or equals $-\infty$. For any probability $\mu$ with $\Ent(\mu\mid\nu)\leq B$, it gives
\begin{equation}
 \int_X(-v)d\mu
 \leq\frac{B+\log C_X}{\alpha_X}\leq C_X(1+B),
 \qquad v\in\PSH_0(X,\omega).
 \label{eq-gu-entropy-control}
\end{equation}
For a nonzero $t\in H^0(X,mL)$, put $v_t=m^{-1}(\log\abs{t}_{h_0^m}^2-\sup_X\log\abs{t}_{h_0^m}^2)$. The Poincar\'e--Lelong formula \cite[equation~(3.11)]{Dem11}, in our $\ddc$ convention, gives $\omega+\ddc v_t=m^{-1}[\operatorname{div}(t)]\geq0$. Hence \eqref{eq-gu-entropy-control} applies to $v_t$, proving logarithmic integrability at the divisor. For $u\in\cK_B$, monotonicity and \eqref{eq-gu-concavity} with $v=0$ give
\begin{equation}
 0\leq-E(u)\leq\int_X(-u)d\mu_u\leq C_X(1+B).
 \label{eq-energy-budget}
\end{equation}

\subsection{Hilbert forms and the quantized energy}
Choose a fixed positive integer $\ell$ such that $\ell\omega+\Ric(\nu)\geq\omega$, where locally $\Ric(\nu)=-\ddc\log\nu$. Keep $m=k+\ell$. Fix an integer $k_0\geq1$, depending only on the reference geometry, such that $mL$ is very ample for every $k\geq k_0$ and the reference expansion below holds on this range. This fixed threshold may be increased independently of $u$ and $B$. Set $N=N_k=h^0(X,mL)$. We use $m$ for exact tensor-power normalizations, but express error bounds and growth estimates in terms of $k$; the ratios $m/k$ and $k/m$ are bounded by geometric constants.

Choose sections $s_1,\ldots,s_N$ orthonormal for $L^2(h_0^m,\nu)$. We explain how evaluation becomes a numerical column. In a local holomorphic frame $e$ of $L$, write $s_j=\sigma_j e^{\otimes m}$ and put $\sigma=(\sigma_j)_{j=1}^N$. In the associated unit frame, the evaluation column is
\begin{equation}
 s(x)=e^{-m\phi_0(x)/2}\sigma(x),\qquad
 \int_Xs(x)s(x)^*d\nu(x)=I_N .
 \label{eq-evaluation-column}
\end{equation}
Thus $\sigma$ is holomorphic on the chart, whereas $s$ records pointwise norms. Unit-frame changes multiply the whole column by a scalar of modulus one. A finite Borel partition subordinate to charts fixes measurable columns wherever a matrix-valued probability law is needed.

For $u\in\cE^1$, define the Hilbert form, its associated Bergman potential, and the quantized energy by
\begin{align}
 H_u=H_k(u)&:=\int_Xss^*e^{-ku}d\nu,
 \label{eq-gu-hilbert}\\
 u_H&:=\frac1m\log\frac{s^*H^{-1}s}{N},
 \qquad E_k(H):=-\frac1{mN}\log\det H,\quad H>0,
 \label{eq-gu-fs}\\
 \beta_ku&:=u_{H_k(u)}.
 \label{eq-bergman-map}
\end{align}
The difference $m-k=\ell$ between the tensor power and the Hilbert exponent in \eqref{eq-gu-hilbert} is essential to the curvature argument below. The subscript of $E_k$ records the approximation index; its exact normalization in \eqref{eq-gu-fs} is $1/(mN)$. 

Full-mass potentials have zero Lelong numbers, so $e^{-pu}\in L^1(\nu)$ for every finite $p>0$ by \cite[proof of Lemma~6.4]{BBGZ13}. Hence $H_k(u)$ is finite. For a nonzero coefficient vector, the corresponding section is nonzero on an open set, so its squared integral is strictly positive. Thus $H_k(u)>0$. Also $u_H$ is a global smooth strictly $\omega$-psh potential: the phase in \eqref{eq-evaluation-column} cancels, and
\begin{equation}
 m\omega_{u_H}=\ddc\log(\sigma^*H^{-1}\sigma).
 \label{eq-fs-local-curvature}
\end{equation}

For the reference $H=I_N$, write $\rho_m=s^*s$. 
The smooth Bergman expansion \cite[Theorem~1]{Zel98} gives
\begin{equation}
 N_k\asymp k^n,\qquad \rho_m/N_k=1+O(k^{-1}),
 \qquad u_I=O(k^{-2}).
 \label{eq-gu-reference-expansion}
\end{equation}
All remainder estimates in these expansions hold in every fixed
$C^q$ norm. 
To see the last rate explicitly, the fixed smooth expansion has the form
\begin{align*}
 \rho_m&=a_0m^n+a_1(x)m^{n-1}+O(m^{n-2}),\\
 N_k=\int_X\rho_m d\nu&=a_0m^n+\overline a_1m^{n-1}+O(m^{n-2}),
 \qquad\overline a_1=\int_Xa_1d\nu.
\end{align*}
Here $a_0>0$ is constant because the reference measure is proportional to the curvature volume. Dividing, taking the logarithm, and then using the factor $1/m$ in \eqref{eq-gu-fs} yields
\begin{equation*}
u_I=\frac{a_1-\overline a_1}{a_0m^2}+O(m^{-3}).
\end{equation*}
Thus $\norm{u_I}_{C^q}\leq C_qk^{-2}$ for every fixed $q$. Without division by $N_k$, a constant term of order $(\log k)/k$ would remain. This expansion concerns only the fixed smooth reference metric, not a family of singular inputs.

\subsection{Energy comparisons}
Normalized logarithmic Hilbert determinants approximate Monge--Amp\`ere energy in Berman--Boucksom's volume-of-balls approach \cite[Theorem~A]{BB10}. The tangent-line mechanism needed here is more specific. Zhang's quantization argument \cite[Proposition~4.2]{Zha24} compares the energy of a potential with that of a Hilbert approximation after a small contraction of the potential. His Remark~4.3 traces the convexity argument to Berman--Freixas i Montplet \cite[equation~(3.4)]{BF14}. Our first lemma uses this mechanism with contraction factor $k/m=1-\ell/m$, retains the reference-density error of order $k^{-1}$, and extends the resulting bound to all finite-energy endpoints. 

\begin{lem}
\label{lem-gu-hilbert}
For every $u\in\cE^1$ with $u\leq0$,
\begin{equation}
 0\leq-E_k(H_k(u))\leq(1+C_X/k)(-E(u)).
 \label{eq-gu-hilbert-bound}
\end{equation}
\end{lem}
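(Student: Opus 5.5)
The lower bound is immediate; for the upper bound I would use Berndtsson-type log-concavity of the Hilbert determinant along the weak geodesic from $0$ to $u$, followed by a tangent-line comparison at $t=0$, where $\mu_0=\nu$ and the reference density expansion \eqref{eq-gu-reference-expansion} applies.

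\emph{Lower bound.} Since $u\leq0$, one has $e^{-ku}\geq1$, so $H_k(u)\geq\int_Xss^*\,d\nu=I_N$ by \eqref{eq-evaluation-column}. Hence $\det H_k(u)\geq1$ and $-E_k(H_k(u))=(mN)^{-1}\log\det H_k(u)\geq0$.

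\emph{Reduction to smooth endpoints.} First take $u$ smooth, strictly $\omega$-psh and $u\leq0$. For general $u\in\cE^1$ with $u\leq0$, Theorem~\ref{thm-smooth-decreasing} gives smooth $v_j\downarrow u$. After subtracting the positive constants $\sup_X(v_j)_+\to0$, these become nonpositive smooth approximants. Monotone convergence then gives $H_k(v_j)\uparrow H_k(u)$, which is finite because $e^{-pu}\in L^1$, and decreasing continuity of $E$ gives $E(v_j)\to E(u)$. So \eqref{eq-gu-hilbert-bound} passes to the limit.

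\emph{Main step.} Let $u_t$, $0\leq t\leq1$, be the bounded weak geodesic from $u_0=0$ to $u_1=u$, and put
\begin{equation*}
f(t):=\frac1{mN}\log\det H_k(u_t).
\end{equation*}
The Hilbert form $H_k(u_t)$ is the $L^2$ metric on $H^0(X,mL)$ for the weight $m\phi_0+ku_t-\log\nu$, viewed as a function on $X\times S$. Its curvature is
\begin{equation*}
k(\pi_X^*\omega+\ddc U)+\ell\omega+\Ric(\nu)\geq\omega>0,
\end{equation*}
which is exactly where the twist $\ell$ is used. By Berndtsson's positivity of direct images, the induced metric on $\det H^0(X,mL)$ has positive curvature. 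For the trivialized family this means $-\log\det H_k(u_t)$ is subharmonic in $\tau$; being independent of $\mathrm{Im}\,\tau$, it is convex in $t$. Hence $f$ is concave, and
\begin{equation*}
f(1)\leq f(0)+f'(0^+)=f'(0^+).
\end{equation*}
Differentiating under the integral, justified by the bounded first derivatives of the geodesic between smooth endpoints, gives
\begin{equation*}
f'(0^+)=\frac{k}{mN}\int_X\rho_m(-\dot u_0)\,d\nu.
\end{equation*}
Convexity of $t\mapsto u_t$ gives $\dot u_0\leq u_1-u_0=u\leq0$, so $-\dot u_0\geq0$. Combining this with $\rho_m/N\leq1+C_X/k$ from \eqref{eq-gu-reference-expansion} and $k/m\leq1$,
\begin{equation*}
f(1)\leq(1+C_X/k)\int_X(-\dot u_0)\,d\nu.
\end{equation*}
Since $E$ is affine along the geodesic and $\mu_{u_0}=\nu$, we have $\int_X\dot u_0\,d\nu=E(u)-E(0)=E(u)$. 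Therefore $-E_k(H_k(u))=f(1)\leq(1+C_X/k)(-E(u))$.

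\emph{Expected obstacle.} The hard part is rigor in applying Berndtsson's theorem to the merely $C^{1,1}$ (or bounded) geodesic, and in the one-sided derivative at $t=0$. My first attempt would avoid differentiation altogether. I would use concavity only through the chord inequality $f(1)\leq t^{-1}\bigl(f(t)-f(0)\bigr)$, and pass $t\downarrow0$ using the uniform bound $\abs{u_t}\leq Ct$ together with dominated convergence. Berndtsson's theorem would be applied to smooth $\e$-geodesics approximating $u_t$ (or directly to bounded psh weights), with the limit taken at the end.
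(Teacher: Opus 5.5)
Your proposal is correct and follows essentially the paper's own proof. The shared steps are: the lower bound from $H_k(u)\geq I_N$; concavity of $\log\det H_k(u_t)$ along the weak geodesic from $0$ via Berndtsson positivity (the paper quotes this in the finite-energy form of Darvas--Lu--Rubinstein); the tangent-line bound at $t=0$ using $\rho_m/N\leq1+C_X/k$, $k/m\leq1$ and affinity of $E$; and decreasing smooth approximation. You can legitimately skip the paper's intermediate $(1-\e)u$ step, because the approximants are already strictly positive.

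One small fix: the shifted approximants $v_j=u_j-\max\{\sup_Xu_j,0\}$ need not be monotone in $j$. So $H_k(v_j)\to H_k(u)$ should be justified by dominated convergence, using $e^{-kv_j}\leq e^{k\max\{\sup_Xu_1,0\}}e^{-ku}\in L^1(\nu)$, rather than by monotone convergence.
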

\begin{proof}
We use the adjoint Hilbert-form comparison arising from
Berndtsson's direct-image positivity
\cite[Proposition~3.1]{Ber09}, in the finite-energy formulation
of Darvas--Lu--Rubinstein \cite[Proposition~2.12]{DLR20}. 
Let $(F,h_F)$ have curvature $\eta>0$, and let $v_t$ be a finite-energy $\eta$-subgeodesic. Denote by $\cH_t$ the adjoint $L^2$ form on $H^0(X,F\otimes K_X)$, obtained by integrating the squared norm of an $F$-valued top form against $h_Fe^{-v_t}$. The cited result gives
\begin{equation*}
 \cH_t\geq
 \cH_0^{1/2}(\cH_0^{-1/2}\cH_1\cH_0^{-1/2})^t\cH_0^{1/2}.
\end{equation*}
For positive Hermitian forms, $A\geq B$ implies $\det A\geq\det B$, by applying the eigenvalue comparison to $B^{-1/2}AB^{-1/2}$. Taking determinants therefore gives
\begin{equation*}
 \log\det\cH_t\geq(1-t)\log\det\cH_0+t\log\det\cH_1.
\end{equation*}
The same comparison on every subinterval proves concavity of $\log\det\cH_t$.

Take $F=mL\otimes K_X^{-1}$, with reference curvature $m\omega+\Ric(\nu)$; then $F\otimes K_X=mL$ and the adjoint form is exactly $H_k(u_t)$. Let $u_t$ be the weak geodesic from $0$ to a smooth strictly positive endpoint $u\leq0$, and use its strip extension $U$ defined above. The curvature condition is verified on the product, including mixed time-space components:
\begin{equation*}
 \pi_X^*(m\omega+\Ric(\nu))+\ddc(kU)
 =k(\pi_X^*\omega+\ddc U)+\pi_X^*(\ell\omega+\Ric(\nu))\geq0.
\end{equation*}
All weights and velocities are bounded. Consequently $t\mapsto-E_k(H_k(u_t))$ is concave, vanishes at zero, and is bounded by its initial derivative. Pointwise convexity of $u_t$ gives $\dot u_0\leq0$. Differentiating the determinant at $H_k(0)=I_N$ gives
\begin{align*}
 -E_k(H_k(u))
 &\leq\frac{k}{m}\int_X(-\dot u_0)\frac{\rho_m}{N}d\nu\\
 &\underalign{\eqref{eq-gu-reference-expansion}}{\leq}
 \frac{k}{m}(1+C_X/k)\int_X(-\dot u_0)d\nu\\
 &\leq(1+C_X/k)(-E(u)).
\end{align*}
The last line uses affinity of energy on $u_t$ and $k/m\leq1$. Since $e^{-ku}\geq1$, \eqref{eq-evaluation-column} gives $H_k(u)\geq I_N$, proving the other inequality.

\emph{Passage to finite energy.}
Smooth semipositive endpoints follow by replacing $u$ by $(1-\e)u$ and taking the limit. For arbitrary $u\leq0$, Theorem~\ref{thm-smooth-decreasing} gives smooth $u_j\downarrow u$. Set $v_j=u_j-\max\{\sup_Xu_j,0\}\leq0$. The subtracted constants tend to zero; decreasing energy continuity gives $E(v_j)\to E(u)$. Moreover,
\begin{equation*}
 e^{-kv_j}
 \leq C_k e^{-ku}\in L^1(\nu),\qquad
 e^{-kv_j}\longrightarrow e^{-ku}.
\end{equation*}
The constant is finite because the subtracted constants are bounded. Entrywise dominated convergence yields $H_k(v_j)\to H_k(u)$ and hence convergence of determinants. This proves \eqref{eq-gu-hilbert-bound}.
\end{proof}

The opposite energy comparison on the Bergman space is developed in \cite[Lemmas~7.7 and~7.8]{BBGZ13}; see also \cite[Proposition~4.4]{Zha24}. There, the quantized exhaustion controls the continuous energy exhaustion up to errors tending to zero. The next proof retains the rates furnished by \eqref{eq-gu-reference-expansion}. 

\begin{lem}
\label{lem-gu-matrix-energy}
If $H\geq I_N$ or $\sup_Xu_H=0$, then $E_k(H)\leq0$ and
\begin{equation}
 E(u_H)\geq E_k(H)-\frac{C_X}{k}(-E_k(H))-\frac{C_X}{k^2}.
 \label{eq-gu-matrix-energy}
\end{equation}
\end{lem}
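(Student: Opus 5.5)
The plan is to interpolate between the reference form $I_N$ and $H$ along the matrix geodesic. The resulting Fubini--Study potentials form a smooth subgeodesic, so convexity of $E$ bounds $E(u_H)$ below by a first-order term at $I_N$. That term is essentially $E_k(H)$, and the reference expansion \eqref{eq-gu-reference-expansion} controls the error.

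\emph{Setup.} Write $A=\log H$ (Hermitian), $H_t=e^{tA}$ and $u_t:=u_{H_t}$. Locally $m\,u_t=\log(\sigma^*e^{-tA}\sigma)-\log N-m\phi_0$. After complexifying $t\mapsto\tau$, the function $(x,\tau)\mapsto\log\bigl(\sigma^*e^{-\mathrm{Re}\,\tau A}\sigma\bigr)$ is plurisubharmonic. Indeed, in an eigenbasis of $A$ it is $\log\sum_j\abs{e^{-\tau a_j/2}\sigma_j}^2$, the logarithm of a sum of squared moduli of holomorphic functions. By \eqref{eq-fs-local-curvature}, $u_t$ is therefore a smooth subgeodesic, and the fiber-integral formula makes $t\mapsto E(u_t)$ convex. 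Hence
\begin{equation*}
E(u_H)\geq E(u_I)+\int_X\dot u_0\,d\mu_{u_I},\qquad
\dot u_0=-\frac1m\frac{s^*As}{s^*s}.
\end{equation*}
By \eqref{eq-gu-reference-expansion}, $\norm{u_I}_{C^2}\leq C_Xk^{-2}$, so $\abs{E(u_I)}\leq C_Xk^{-2}$ and $\mu_{u_I}=(1+O(k^{-2}))\nu$. Moreover $s^*s=\rho_m=N(1+O(k^{-1}))$. Combining these,
\begin{equation*}
\int_X\dot u_0\,d\mu_{u_I}=-\frac1{mN}\int_Xs^*As\,(1+g)\,d\nu,\qquad \abs g\leq C_X/k .
\end{equation*}
Using \eqref{eq-evaluation-column}, $\int s^*As\,d\nu=\tr A=-mN\,E_k(H)$. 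It follows that
\begin{equation*}
E(u_H)\geq E_k(H)-\frac{C_X}{k}\,\frac{\tr\abs A}{mN}-\frac{C_X}{k^2},
\end{equation*}
since $\abs{s^*As}\leq s^*\abs A s$ and $\int s^*\abs A s\,d\nu=\tr\abs A$.

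\emph{Case $H\geq I_N$.} Then $A\geq0$, so $\tr\abs A=\tr A=-mN\,E_k(H)$, which gives \eqref{eq-gu-matrix-energy} directly. Also $\det H\geq1$, hence $E_k(H)\leq0$.

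\emph{Case $\sup_Xu_H=0$.} Here $s^*H^{-1}s\leq N$ pointwise. Integrating against $\nu$ with \eqref{eq-evaluation-column} gives $\tr H^{-1}\leq N$, i.e.\ $\sum_ie^{-a_i}\leq N$ for the eigenvalues $a_i$ of $A$. By the AM--GM inequality $\sum a_i\geq0$, so $E_k(H)\leq0$. The main obstacle is bounding $\tr\abs A$ by $\tr A$, since $A$ may now have negative eigenvalues. From $\sum_i(e^{-a_i}-1+a_i)\leq\tr A$ and the elementary bound $e^{-a}-1+a\geq c\min\{a^2,\abs a\}$ for $a\leq0$, I would derive
\begin{equation*}
\sum_i(a_i)_-\leq C\bigl(\tr A+\sqrt{N\,\tr A}\bigr).
\end{equation*}
The large negative eigenvalues contribute linearly, and the small ones are handled by Cauchy--Schwarz. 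With $x=\tr A/(mN)=-E_k(H)$ this yields
\begin{equation*}
\frac{\tr\abs A}{mN}\leq C\Bigl(x+\sqrt{x/(mN)}\Bigr)\leq C\bigl(x+m^{-1}\bigr)
\end{equation*}
by AM--GM. After multiplication by $C_X/k$, the term $m^{-1}$ becomes $O(k^{-2})$, which proves \eqref{eq-gu-matrix-energy} in this case as well. I would double-check that the $O(k^{-1})$ density error is uniform in $C^0$, which is all that the argument uses.
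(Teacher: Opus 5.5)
Your proposal is correct. The framework is the paper's, and so is your treatment of the case $H\geq I_N$: the path $H_t=e^{tA}$, its subgeodesic property, the tangent-line inequality for $E$ at $u_I$, and the identity $\int_Xs^*As\,d\nu=\tr A=-mN\,E_k(H)$.

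The two routes differ in the normalized case. The paper never looks at eigenvalues. It uses pointwise convexity of $t\mapsto u_{H_t}(x)$ to get $f:=\partial_tu_{H_t}\vert_{t=0}\leq u_H-u_I\leq C_X/k^2$. The positive part of $f$ is therefore uniformly tiny, and $\int_X\abs f\,(\rho_m/N)\,d\nu=2\int_Xf_+(\rho_m/N)\,d\nu-E_k(H)\leq-E_k(H)+C_X/k^2$ follows in one line, for both hypotheses at once.

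You instead pass to the global quantity $\tr\abs A$. You control its negative eigenvalues through the spectral constraint $\tr H^{-1}\leq N$, using $\sum_i(e^{-a_i}-1+a_i)\leq\tr A$, the lower bound $e^{-a}-1+a\geq c\min\{a^2,\abs a\}$ for $a\leq0$, and Cauchy--Schwarz. This is longer. It also gives a weaker additive term at that step, $O(1/(km))$ instead of the paper's $O(k^{-3})$. That is harmless, since $E(u_I)$ already costs $O(k^{-2})$.

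In exchange, your argument uses only the integrated condition $\tr H^{-1}\leq N$, i.e.\ $\int_Xe^{m u_H}\,d\nu\leq1$. Since $H\geq I_N$ also implies this condition, your proof establishes the lemma under that single, weaker hypothesis.

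There is one harmless arithmetic slip. With $\tr A=mNx$ one gets $\sqrt{N\tr A}/(mN)=\sqrt{x/m}$, not $\sqrt{x/(mN)}$. Your conclusion $\tr\abs A/(mN)\leq C(x+m^{-1})$ still holds, because $\sqrt{x/m}\leq(x+m^{-1})/2$.
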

\begin{proof}
Put $H_t=\exp(t\log H)$ and $f=\partial_tu_{H_t}\vert_{t=0}$. By diagonalizing $H$, $u_{H_t}$ is a logarithm of a sum of exponentials; hence it is pointwise convex and its strip extension is a subgeodesic. Direct differentiation gives
\begin{equation}
 f=-\frac{s^*(\log H)s}{m\rho_m},\qquad
 \int_X f\frac{\rho_m}{N}d\nu=E_k(H),\qquad
 f\leq u_H-u_I.
 \label{eq-matrix-tangent}
\end{equation}
The integral uses \eqref{eq-evaluation-column}. If $H\geq I_N$, then $f\leq0$ and $E_k(H)\leq0$. If $\sup u_H=0$, the last inequality and \eqref{eq-gu-reference-expansion} give $f\leq C_X/k^2$. Furthermore $s^*H^{-1}s\leq N$, so $\tr H^{-1}\leq N$. The arithmetic--geometric mean inequality implies $\det H^{-1}\leq1$, again giving $E_k(H)\leq0$. In either case,
\begin{equation*}
 \int_X\abs{f}\frac{\rho_m}{N}d\nu
 =2\int_X f_+\frac{\rho_m}{N}d\nu-E_k(H)
 \leq-E_k(H)+C_X/k^2.
\end{equation*}
The fixed expansion also gives
$d\mu_{u_I}=(1+O(k^{-1}))(\rho_m/N)d\nu$ and $\abs{E(u_I)}\leq C_X/k^2$.
Convexity of energy along the matrix-induced subgeodesic, \cite[Lemma~7.2]{BBGZ13}, now implies
\begin{align*}
 E(u_H)&\geq E(u_I)+\int_X f\,d\mu_{u_I}\\
 &\geq E_k(H)-\frac{C_X}{k}\int_X\abs{f}\frac{\rho_m}{N}d\nu-\frac{C_X}{k^2}\\
 &\geq E_k(H)-\frac{C_X}{k}(-E_k(H))-\frac{C_X}{k^2}.
\end{align*}
The second line substitutes \eqref{eq-matrix-tangent} and compares the two reference measures; the third uses the preceding absolute-integral bound.
\end{proof}

\subsection{A uniform \texorpdfstring{$d_1$}{d1} Bergman approximation}
The two energy lemmas do not by themselves estimate a distance. The additional ingredient is entropy duality applied to a density of integral one. It bounds the positive part of the approximation error against the \emph{input} Monge--Amp\`ere measure. The maximum of the two potentials then converts this one-sided estimate and the energy error into $d_1$.

\begin{prop}
\label{prop-full-approximation}
For every $B\geq0$, every $u\in\cK_B$, and every $k\geq k_0$ (the fixed reference threshold chosen above),
\begin{equation}
 d_1(u,\beta_ku)\leq\frac{C_X(1+B)}k,\qquad
 E(u)-E_k(H_k(u))\leq\frac{C_X(1+B)}k.
 \label{eq-full-approximation}
\end{equation}
\end{prop}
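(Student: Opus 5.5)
The proof combines the two energy lemmas with entropy duality. Fix $u\in\cK_B$ and write $H=H_k(u)$, so that $\beta_ku=u_H$. Since $u\leq0$, we have $H\geq I_N$. Lemma~\ref{lem-gu-hilbert} and the energy budget \eqref{eq-energy-budget} then give
\begin{equation*}
0\leq-E_k(H)\leq(1+C_X/k)(-E(u))\leq C_X(1+B).
\end{equation*}
In particular $E(u)-E_k(H)\leq (C_X/k)(-E(u))\leq C_X(1+B)/k$, which is the second inequality in \eqref{eq-full-approximation}. Lemma~\ref{lem-gu-matrix-energy}, valid since $H\geq I_N$, then yields
\begin{equation*}
E(\beta_ku)\geq E_k(H)-C_X(1+B)/k\geq E(u)-C_X(1+B)/k.
\end{equation*}
So the Bergman potential loses at most $O((1+B)/k)$ of energy.

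To turn this into a distance bound, set $z=\max(u,\beta_ku)$ and use the triangle inequality together with \eqref{eq-gu-ordered}:
\begin{equation*}
d_1(u,\beta_ku)\leq 2E(z)-E(u)-E(\beta_ku)\leq 2\bigl(E(z)-E(u)\bigr)+C_X(1+B)/k.
\end{equation*}
By concavity \eqref{eq-gu-concavity} at $u$, $E(z)-E(u)\leq\int_X(\beta_ku-u)_+\,d\mu_u$. The task therefore reduces to a one-sided estimate of $\beta_ku-u$ against the input measure $\mu_u$, as announced before the proposition.

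That estimate comes from the entropy-duality step. Consider
\begin{equation*}
g:=k(\beta_ku-u)\cdot\frac{k}{m}\approx m\beta_ku-ku+(\text{correction}).
\end{equation*}
By definition of $\beta_ku$, the density $s^*H^{-1}s\,e^{-ku}/N=e^{m\beta_ku-ku}$ has integral exactly one against $\nu$, since $\int s^*H^{-1}s\,e^{-ku}d\nu=\tr(H^{-1}H)=N$. Apply Lemma~\ref{lem-measurable-entropy}, or directly the variational formula \eqref{eq-entropy-variational}, to $\mu_u$ with $g=(m\beta_ku-ku)$. This gives
\begin{equation*}
\int_X(m\beta_ku-ku)\,d\mu_u\leq\Ent(\mu_u\mid\nu)\leq B.
\end{equation*}
Writing $m\beta_ku-ku=m(\beta_ku-u)+\ell u$ and using $\int(-u)\,d\mu_u\leq C_X(1+B)$ from \eqref{eq-energy-budget}, we obtain
\begin{equation*}
\int_X(\beta_ku-u)\,d\mu_u\leq\frac{B+\ell C_X(1+B)}{m}\leq\frac{C_X(1+B)}{k}.
\end{equation*}

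This bounds the signed integral, but the argument needs the positive part. To control the negative part, use
\begin{equation*}
\int(u-\beta_ku)_+\,d\mu_u=\int(\beta_ku-u)_+\,d\mu_u-\int(\beta_ku-u)\,d\mu_u,
\end{equation*}
which reduces the question to a lower bound on $\int(\beta_ku-u)\,d\mu_u$, or equivalently to handling the positive part directly. This is where I expect the main obstacle. I would first apply entropy duality to $g=m(\beta_ku-u)_+$ instead. On $\{\beta_ku>u\}$ one has $e^{m\beta_ku-ku}=e^{m(\beta_ku-u)+\ell u}$, and $e^{g}\leq 1+e^{m\beta_ku-ku}e^{-\ell u}$. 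The factor $e^{-\ell u}$ is not bounded, so instead use $g=(m\beta_ku-ku)_+$, for which $\int e^g\,d\nu\leq 2$. Then
\begin{equation*}
\int_X(m\beta_ku-ku)_+\,d\mu_u\leq B+\log 2,
\end{equation*}
and $(\beta_ku-u)_+\leq m^{-1}\bigl[(m\beta_ku-ku)_++\ell(-u)\bigr]$. Combined with \eqref{eq-energy-budget}, this gives $\int(\beta_ku-u)_+\,d\mu_u\leq C_X(1+B)/k$.

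Substituting this into the reduction above gives $d_1(u,\beta_ku)\leq C_X(1+B)/k$. A separate lower-bound argument for the negative part is then unnecessary. The remaining checks are that the Lemma~\ref{lem-gu-matrix-energy} hypotheses hold, which follows from $H\geq I_N$, and that $z\in\cE^1$, which holds since $z\geq u$.
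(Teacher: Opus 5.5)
Your proposal is correct and follows essentially the same route as the paper: the two energy lemmas bound $E(u)-E_k(H_k(u))$ and $E(u)-E(\beta_ku)$, and the maximum $\max\{u,\beta_ku\}$ together with concavity reduces $d_1$ to $\int_X(\beta_ku-u)_+\,d\mu_u$. Entropy duality then finishes exactly as in the paper, using $g=(m\beta_ku-ku)_+$ (whose exponential has $\nu$-integral at most $2$), the pointwise bound $m(\beta_ku-u)_+\leq(m\beta_ku-ku)_++\ell(-u)$, and the bound on $\int_X(-u)\,d\mu_u$; the garbled first choice of $g$ and the signed-integral detour are superfluous and should be deleted, the latter also because Lemma~\ref{lem-measurable-entropy} does not literally apply to a $g$ taking negative values.
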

\begin{proof}
The density
\begin{equation}
 b_u(x)=\frac{s(x)^*H_u^{-1}s(x)}N e^{-ku(x)}
 \quad \text{satisfies}\quad
 \int_Xb_ud\nu=\frac{\tr(H_u^{-1}H_u)}N=1.
 \label{eq-approximation-density}
\end{equation}
Let $Z_u=\{u=-\infty\}$. Since $u\in L^1(\nu)$, $\nu(Z_u)=0$, and finite entropy gives $\mu_u\ll\nu$, hence $\mu_u(Z_u)=0$. On $X\setminus Z_u$, the density is positive and finite. Taking its logarithm and using \eqref{eq-bergman-map} gives the real-valued identity
\begin{equation*}
 m(\beta_ku-u)=\log b_u-\ell u.
\end{equation*}
Thus this particular logarithmic identity also holds $\mu_u$-almost everywhere. Set $b_u=1$ on $Z_u$, without changing any integral. Since $e^{(\log b_u)_+}=\max\{1,b_u\}\leq1+b_u$, Lemma~\ref{lem-measurable-entropy} gives $\int(\log b_u)_+d\mu_u\leq B+\log2$. The pointwise bound $m(\beta_ku-u)_+\leq(\log b_u)_++\ell(-u)$ then yields
\begin{equation}
 \int_X(\beta_ku-u)_+d\mu_u
 \leq\frac{B+\log2+\ell\int_X(-u)d\mu_u}{m}
 \leq\frac{C_X(1+B)}k.
 \label{eq-positive-error}
\end{equation}
The last step uses \eqref{eq-gu-entropy-control} and $m\geq k$. Integrability follows from these two estimates, not from absolute continuity alone.

Equations \eqref{eq-gu-hilbert-bound} and \eqref{eq-energy-budget} imply
$-E_k(H_u)\leq C_X(1+B)$ and
$E(u)-E_k(H_u)\leq C_X(1+B)/k$.
Since $H_u\geq I_N$, the estimate \eqref{eq-gu-matrix-energy} further gives
\begin{equation*}
 E(u)-E(\beta_ku)
 =[E(u)-E_k(H_u)]+[E_k(H_u)-E(u_{H_u})]
 \leq C_X(1+B)/k.
\end{equation*}
Finally set $v=\max\{u,\beta_ku\}$. Ordered distances \eqref{eq-gu-ordered} and concavity \eqref{eq-gu-concavity} give
\begin{align*}
 d_1(u,\beta_ku)
 &\leq 2[E(v)-E(u)]+E(u)-E(\beta_ku)\\
 &\leq2\int_X(\beta_ku-u)_+d\mu_u+C_X(1+B)/k\\
 &\underalign{\eqref{eq-positive-error}}{\leq}C_X(1+B)/k.
\end{align*}
This proves \eqref{eq-full-approximation}. Whenever an endpoint integral is needed later, we use \eqref{eq-d1-i1} to deduce $I_1(u,\beta_ku)\leq C_X(1+B)/k$.
\end{proof}

\section{Gaussian sections: the probability law and the potential}
\label{sec-Gaussian}
In this section we replace a full basis by $n+1$ Gaussian sections without changing the order of the error. 
The use of rotationally invariant random sections and logarithms of their norms goes back to Shiffman--Zelditch \cite[Lemma~3.1 and Section~4]{SZ99}. Their expected-zero-current calculation explains why averaging a logarithmic norm yields a Fubini--Study form. Our purpose is different: we keep $n+1$ sections and estimate an \emph{absolute} $d_1$ error, uniformly in the Hilbert form. The second endpoint measure also depends on these random sections, so an expected-current identity alone would not suffice.

Let $Z=(Z_{ij})_{1\leq i\leq n+1,\,1\leq j\leq N}$ be a random matrix. Its entries are independent complex random variables
\begin{equation*}
 Z_{ij}=X_{ij}+\sqrt{-1}Y_{ij},\qquad
 X_{ij},Y_{ij}\sim\cN(0,1/2).
\end{equation*}
All the real variables here are independent. Each pair $(X_{ij},Y_{ij})$ has density $\pi^{-1}e^{-x^2-y^2}dx\,dy$ on $\R^2$. Identify an $(n+1)\times N$ coefficient matrix with its $(n+1)N$ entries in $\mathbb C^{(n+1)N}$. For each positive Hermitian form $H$, define the probability $P_H$ on this space by the density
\begin{equation}
 dP_H(G)=\frac{(\det H)^{n+1}}{\pi^{(n+1)N}}
 e^{-\tr(GHG^*)}
 \prod_{i=1}^{n+1}\prod_{j=1}^N dx_{ij}\,dy_{ij},
 \qquad G_{ij}=x_{ij}+\sqrt{-1}y_{ij}.
 \label{eq-gaussian-law}
\end{equation}
This is equivalently the distribution of $G=ZH^{-1/2}$, since $Z=GH^{1/2}$ has real Jacobian $(\det H)^{n+1}$. More explicitly,
\begin{equation*}
 G_{ij}=\sum_{k=1}^NZ_{ik}(H^{-1/2})_{kj},\qquad
 \E_{P_H}[\overline{G_{ij}}G_{ik}]=(H^{-1})_{jk}.
\end{equation*}
Different rows are functions of disjoint independent rows of $Z$ and remain independent under $P_H$. Components in the same row are mixed by $H^{-1/2}$ and generally correlated; they are independent when $H$ is diagonal. In particular $\E_{P_H}(g_i^*g_i)=H^{-1}$ for the row vector $g_i=(G_{i1},\ldots,G_{iN})$.

The rows of $G$ give $n+1$ sections $\sum_jG_{ij}s_j$. Define their potential by
\begin{equation}
 u_G(x)=\frac1m\log\frac{\norm{Gs(x)}^2}{(n+1)N}
 =\frac1m\log
 \frac{\sum_{i=1}^{n+1}\abs{\sum_{j=1}^NG_{ij}s_j(x)}_{h_0^m}^2}{(n+1)N}.
 \label{eq-random-potential}
\end{equation}

\begin{lem}
\label{lem-gaussian-compression}
For $P_H$-almost every $G$, the sections in \eqref{eq-random-potential} have no common zero; $u_G$ is then smooth and semipositive. For every $H>0$,
\begin{equation}
 \int_{\mathbb C^{(n+1)N}}d_1(u_G,u_H)dP_H(G)\leq C_n/k.
 \label{eq-gaussian-compression}
\end{equation}
Consequently, for $u\in\cK_B$,
\begin{equation}
 \int_{\mathbb C^{(n+1)N}}d_1(u,u_G)dP_{H_u}(G)
 \leq C_X(1+B)/k.
 \label{eq-random-approximation}
\end{equation}
\end{lem}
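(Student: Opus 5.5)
The plan is to bound $d_1(u_G,u_H)$ through the one-sided endpoint estimate already derived after \eqref{eq-gu-ordered}. With $z=\max\{u_G,u_H\}$,
\begin{equation*}
 d_1(u_G,u_H)\leq\int_X(u_G-u_H)_+\,d\mu_{u_H}+\int_X(u_H-u_G)_+\,d\mu_{u_G}\leq I_1(u_G,u_H).
\end{equation*}
We then take expectations under $P_H$. The almost sure statement comes first. The set of matrices $G$ whose $n+1$ sections have a common zero is the image of the incidence variety $\{(x,G):Gs(x)=0\}$. This variety has real codimension $2(n+1)$ in $X\times\mathbb C^{(n+1)N}$, because $s(x)\neq0$ by very ampleness. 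Its image therefore has real codimension at least $2$ and is $P_H$-null. Off this set, $\norm{Gs(x)}^2$ is smooth and positive, and \eqref{eq-fs-local-curvature} with $\sigma$ replaced by $G\sigma$ shows that $u_G$ is smooth and $\omega$-psh.

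The key pointwise observation concerns the difference $u_G-u_H$. It equals
\begin{equation*}
 \frac1m\log W_x,\qquad W_x=\frac{\norm{Gs(x)}^2}{(n+1)\,s(x)^*H^{-1}s(x)}.
\end{equation*}
Under $P_H$ the vector $Gs(x)\in\mathbb C^{n+1}$ has independent centred complex Gaussian entries of variance $s^*H^{-1}s$, since the rows are independent and $\E(g_i^*g_i)=H^{-1}$. Hence $W_x$ has the law of $\norm{\zeta}^2/(n+1)$ with $\zeta$ standard in $\mathbb C^{n+1}$, independent of $x$ and of $H$, and $c_n:=\E\abs{\log W_x}<\infty$. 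Since $\mu_{u_H}$ is deterministic, Fubini gives
\begin{equation*}
 \E\int_X\abs{u_G-u_H}\,d\mu_{u_H}=c_n/m.
\end{equation*}

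The main obstacle is the random endpoint term $\E\int_X\abs{\log W_x}\,\omega_{u_G}^n$. Here $\omega_{u_G}^n$ depends on the first two jets of $G\sigma$ at $x$, which are correlated with $W_x$. I will compute this expectation pointwise in a Kac--Rice/Shiffman--Zelditch manner. Fix $x$ and a local frame, write $\xi=G\sigma$, and regress the derivatives $\partial\xi(x)$ on $\xi(x)$. The $\mathbb C^{n+1}$-valued Gaussian vectors $\partial_j\xi(x)-\xi(x)\,a_j(x)$ are then independent of $\xi(x)$. The scalar coefficients $a_j$ are given by $\partial_j\log(\sigma^*H^{-1}\sigma)$, and their $\bar\partial$-derivatives reproduce $m\,\omega_{u_H}$.

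Next, $m\,\omega_{u_G}=\ddc\log\norm{\xi}^2$ equals $m\,\omega_{u_H}$ plus the pullback of the Fubini--Study form built from the component of $\partial\xi-\xi a$ orthogonal to $\xi$, divided by $\norm\xi^2$. This component has covariance $\norm\xi^2$ times the projection of $m\,\omega_{u_H}$-type data. Conditional on $\xi(x)$, the Gaussian moments of the orthogonal part are therefore homogeneous, and the ratio is scale-free. I therefore expect to obtain
\begin{equation*}
 \E\bigl[\abs{\log W_x}\,\omega_{u_G}^n(x)\bigr]\leq C_n\,\omega_{u_H}^n(x),
\end{equation*}
with $C_n$ depending only on Gaussian moments in $\mathbb C^{n+1}$, because $\abs{\log W_x}$ depends only on $\norm{\xi(x)}$. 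If the orthogonal projection correlates with the direction of $\xi$, I would first use the unitary invariance of $\zeta$ to split it into radial and angular parts, which are independent. Integrating over $X$ gives $C_n/m$, and together with the previous step, \eqref{eq-gaussian-compression} follows since $m\geq k$.

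Finally, \eqref{eq-random-approximation} follows from the triangle inequality,
\begin{equation*}
 d_1(u,u_G)\leq d_1(u,\beta_ku)+d_1(\beta_ku,u_G),
\end{equation*}
with $\beta_ku=u_{H_u}$. We integrate against $P_{H_u}$, bound the first term by Proposition~\ref{prop-full-approximation}, and bound the second by \eqref{eq-gaussian-compression} with $H=H_u$.
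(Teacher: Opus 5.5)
Your framework matches the paper's: the bound $d_1\le I_1$, the incidence-variety argument for basepoint-freeness, the exact computation of the deterministic endpoint through the law of $\norm{\zeta}^2/(n+1)$, and the triangle inequality with Proposition~\ref{prop-full-approximation}. Your regression of $\partial\xi(x)$ on $\xi(x)$ is also equivalent to the paper's normalization of the lift ($F(0)=e_1$, $e_1^*\partial_iF(0)=0$). With that normalization $a_j(x)=0$, $\xi(x)=g$ and $\partial\xi(x)=WJ$, where $W$ is independent of $g$ and $J^*J=m\omega_{u_H}(x)$.

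The gap is the random endpoint. You flag it as the main obstacle but do not prove it, and your sketch rests on two incorrect claims.
\begin{enumerate}
\item $m\omega_{u_G}$ is not $m\omega_{u_H}$ plus a Fubini--Study pullback. Since $P_{\xi^\perp}\xi=0$, at $x$ the whole form is
\begin{equation*}
 m\omega_{u_G}(x)=\norm{\xi}^{-2}\,\eta^*P_{\xi^\perp}\eta,\qquad \eta_j=\partial_j\xi-a_j\xi.
\end{equation*}
The difference $m\omega_{u_G}-m\omega_{u_H}=\ddc\log W$ is $\ddc$ of a global function on $X$, so it is never semipositive unless it vanishes.
\item The regressed derivative $\eta$ is independent of $\xi(x)$. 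Its covariance is therefore deterministic ($J^*J$ per row in the normalized lift), not $\norm{\xi}^2$ times anything.
\end{enumerate}
So the conditional law is not scale-free. Cauchy--Binet together with $\E\abs{\det Z_n}^2=n!$ gives
\begin{equation*}
 \E[\mu_{u_G}\mid g]=n!\,\norm{g}^{-2n}\mu_{u_H},
\end{equation*}
as in \eqref{eq-gaussian-top-form}. This weight blows up for small $\norm g$, which is exactly where $\abs{\log W_x}$ is largest. It is also exactly where the one-sided term $\int(u_H-u_G)_+\,d\mu_{u_G}$ in your first display lives.

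What closes the argument is the radial law. Because there are exactly $n+1$ sections, $\norm{g}^2=(n+1)W_x$ has density $t^ne^{-t}/n!$. The factor $n!\,t^{-n}$ therefore cancels exactly, and
\begin{equation*}
 \E\bigl[\abs{u_G-u_H}\,d\mu_{u_G}(x)\bigr]=\frac1m\int_0^\infty\abs{\log\frac{t}{n+1}}e^{-t}\,dt\;d\mu_{u_H}(x).
\end{equation*}
Your target inequality $\E[\abs{\log W_x}\,\omega_{u_G}^n]\le C_n\,\omega_{u_H}^n$ is true. However, the scale-freeness you invoke would hide precisely the singular factor on which the proof depends. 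The radial/angular splitting you propose deals only with the direction of $\xi$, not with the $\norm{g}^{-2n}$ weight. You need to carry out the conditional determinant computation and the Gamma-density cancellation explicitly.
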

\begin{proof}
\emph{Step 1: no common zero.}
For each $x$, the nonzero evaluation vector $s(x)$ makes $Gs(x)=0$ a system of $n+1$ independent complex linear equations. The incidence set of pairs $(G,x)$ therefore has dimension
\begin{equation*}
 (n+1)N-(n+1)+n=(n+1)N-1.
\end{equation*}
Its image in matrix space is a proper closed algebraic set: $X$ is projective and projection from its product is proper. This image has Lebesgue measure zero, hence $P_H$ measure zero by \eqref{eq-gaussian-law}. On its complement,
\begin{equation}
 m\omega_{u_G}
 =\ddc\log\frac{\norm{G\sigma}^2}{(n+1)N}.
 \label{eq-random-local-curvature}
\end{equation}

\emph{Step 2: normalize a holomorphic lift without changing the calculation.}
Fix a point with local coordinates $z_1,\ldots,z_n$ centered there. Under $P_H$ write $G=ZH^{-1/2}$ and put $F=H^{-1/2}\sigma$, so $G\sigma=ZF$. A constant unitary change in $F$ can rotate $F(0)/\norm{F(0)}$ to the first coordinate vector. Right multiplication of $Z$ by its inverse unitary leaves the law of $Z$ unchanged. After a nonzero constant scaling assume $F(0)=e_1$. Finally multiply $F$ by a nonvanishing holomorphic function $f$ satisfying
\begin{equation*}
 f(0)=1,\qquad \partial_if(0)=-e_1^*\partial_iF(0).
\end{equation*}
For example the exponential of the corresponding linear polynomial has this property. The modified lift satisfies
\begin{equation*}
 F(0)=e_1,\qquad e_1^*\partial_iF(0)=0\quad(1\leq i\leq n).
\end{equation*}
These operations preserve $m\omega_{u_H}$ and $m\omega_{u_G}$, the curvature forms of the two metrics $h_0^me^{-mu_H}$ and $h_0^me^{-mu_G}$ on $mL$: multiplication of the lift by $f$ adds only $\ddc\log\abs{f}^2=0$. They preserve $\norm{ZF}^2/\norm{F}^2$ as well, so both the potential difference and its endpoint measures are unchanged.

Let $J$ be the $(N-1)\times n$ matrix with columns formed by the last $N-1$ components of $\partial_iF(0)$. Differentiating $\log\norm{F}^2$ explicitly gives
\begin{equation*}
 \partial_i\bar\partial_j\log\norm{F}^2(0)
 =(\partial_jF)^*\partial_iF
 -((\partial_jF)^*F)(F^*\partial_iF)
 =(J^*J)_{ji}.
\end{equation*}
The subtracted term vanishes by the normalization. By \eqref{eq-fs-local-curvature}, in the convention $\ddc=\frac{\sqrt{-1}}{2\pi}\partial\bar\partial$, the matrix of $m\omega_{u_H}$ is precisely this Hermitian Gram matrix.

\emph{Step 3: compute the random curvature matrix conditionally.}
Write $Z=[g,W]$, where $g\in\mathbb C^{n+1}$ is its first column and $W$ consists of the other columns. They are independent standard complex Gaussians. At the chosen point, $ZF=g$ and its derivative matrix is $WJ$. Differentiating the logarithm in \eqref{eq-random-local-curvature} gives the Hermitian matrix
\begin{equation}
 \frac{J^*W^*\left(I_{n+1}-gg^*/\norm{g}^2\right)WJ}{\norm{g}^2}
 \quad\text{for }m\omega_{u_G}.
 \label{eq-random-curvature-matrix}
\end{equation}
This is the orthogonal projection of the derivatives onto $g^\perp$, divided by the squared norm of the value. The event $g=0$ has probability zero.

Conditional on $g$, choose an orthonormal basis of $g^\perp$. In that basis the projected matrix $W$ has $n$ rows of independent standard complex Gaussians: left unitary invariance gives this assertion, and independence of $W$ and $g$ permits conditioning. To compute its determinant moment, recall that the Cauchy--Binet identity is
\begin{equation*}
 \det(J^*J)=\sum_{\substack{I\subset\{1,\ldots,N-1\}\\\abs{I}=n}}\abs{\det J_I}^2;
\end{equation*}
see \cite[Section~0.8.7]{HJ13}. Expanding the determinant of the product of the projected $W$ with $J$, unequal column-index sets have zero mixed expectation by independence and phase invariance. For an $n\times n$ standard Gaussian matrix $Z_n$,
\begin{equation*}
 \E\abs{\det Z_n}^2
 =\sum_{\sigma,\tau\in\mathfrak S_n}
 \sgn(\sigma)\sgn(\tau)
 \prod_{i=1}^n\E[Z_{i,\sigma(i)}\overline{Z_{i,\tau(i)}}]
 =n!.
\end{equation*}
Each expectation in the product is a Kronecker delta, so only equal permutations contribute, giving $n!$ terms. More explicitly, let $A$ be the projected $n\times(N-1)$ standard Gaussian matrix on $g^\perp$. Cauchy--Binet and phase invariance give
\begin{align*}
 \E[\abs{\det(AJ)}^2\mid g]
 &=\sum_{I,K}\det J_I\overline{\det J_K}\,
       \E[\det A_I\overline{\det A_K}]\\
 &=n!\sum_I\abs{\det J_I}^2=n!\det(J^*J).
\end{align*}
The index sets have $n$ elements; if $I\ne K$, multiplying a column in their symmetric difference by a phase $e^{\sqrt{-1}\theta}$ changes the integrand but preserves its law, so its expectation is zero. Dividing by $\norm{g}^{2n}$, as required by \eqref{eq-random-curvature-matrix}, gives
\begin{equation}
 \E[\mu_{u_G}\mid g]
 =\frac{n!}{\norm{g}^{2n}}\mu_{u_H}.
 \label{eq-gaussian-top-form}
\end{equation}
All common differential-form factors, including $m^{-n}$, cancel from this ratio.

\emph{Step 4: the two absolute endpoint moments.}
At the fixed point,
\begin{equation*}
 u_G-u_H=\frac1m\log\frac{\norm{g}^2}{n+1}.
\end{equation*}
Each $\abs{g_i}^2$ has density $e^{-t}$ on $t>0$, by polar integration of $\pi^{-1}e^{-\abs{z}^2}dx\,dy$. If the sum of $q$ independent such variables has density $t^{q-1}e^{-t}/(q-1)!$, convolution with the next gives
\begin{equation*}
 \int_0^t\frac{s^{q-1}e^{-s}}{(q-1)!}e^{-(t-s)}ds
 =\frac{t^qe^{-t}}{q!}.
\end{equation*}
Thus $\norm{g}^2$ has density $t^ne^{-t}/n!$. For the random endpoint, the difference depends only on $g$, so conditioning in \eqref{eq-gaussian-top-form} gives, pointwise in $x$,
\begin{equation*}
 \E \big[\abs{u_G-u_H}\,d\mu_{u_G}(x)\mid g\big]
 =\frac1m\abs{\log\frac{\norm{g}^2}{n+1}}
    \frac{n!}{\norm{g}^{2n}}d\mu_{u_H}(x).
\end{equation*}
Integrating first in $g$ and then in $x$ is justified by nonnegative Tonelli, without assuming finiteness in advance. Since $\mu_{u_H}(X)=1$, we obtain
\begin{align}
 \E_{P_H}\int_X\abs{u_G-u_H}d\mu_{u_H}
 &=\frac1{mn!}\int_0^\infty
      \abs{\log\frac{t}{n+1}}t^ne^{-t}dt,\notag\\
 \E_{P_H}\int_X\abs{u_G-u_H}d\mu_{u_G}
 &=\frac1m\int_0^\infty
       \abs{\log\frac{t}{n+1}}e^{-t}dt.
 \label{eq-gaussian-endpoints}
\end{align}
In the second line the factor $n!t^{-n}$ in \eqref{eq-gaussian-top-form} cancels the gamma density's factor $t^n/n!$. Both integrals are finite: $\abs{\log t}$ is integrable at zero, and the exponential controls infinity. The sum of the two quantities in \eqref{eq-gaussian-endpoints} is exactly $\E_{P_H}I_1(u_G,u_H)$, by the definition recalled in Theorem~\ref{thm-darvas-inputs}. The inequality $d_1\leq I_1$ and $m\geq k$ prove \eqref{eq-gaussian-compression}. Applying it with $H=H_u$, and then the triangle inequality and \eqref{eq-full-approximation}, proves \eqref{eq-random-approximation}.
\end{proof}

\section{Determinantal sampling and integer interpolation}
\label{sec-sampling}
Equation \eqref{eq-random-approximation} approximates $u$ by a random potential with coefficient law $P_{H_u}$. This law varies with $u$, so it does not yet supply a finite cover. We will construct a single reference probability for each coarse metric center $v$ and compare every nearby input law with it. The construction has three components: a balanced sample of points, integer-valued observations of $\beta_ku-v$, and Gaussian coefficients.

\subsection{Balanced Hilbert forms and the variational gap}

Fixed-measure balancing has its antecedent in the projective
center-of-mass construction of Bourguignon--Li--Yau
\cite[Section~2 (c)--(d)]{BLY94}.
Donaldson \cite[Section~2.2 and Proposition~3]{Don09} gives a 
fixed-measure formulation for more general Radon measures.
Here the probability measure is prescribed, rather than taken
to be the metric's own Fubini--Study volume.
We use this formulation and the variational treatment of
\cite[Section~7.2, Lemma~7.4]{BBGZ13}, with the sign convention
in which the functional is minimized.
The quantitative task is to compare the minimizer with the Hilbert
form $H_u$ supplied by Bergman approximation; these are generally
different Hermitian forms.


\begin{lem}
\label{lem-balanced-gap}
For $u\in\cK_B$, the functional on positive Hermitian forms
\begin{equation}
 F_{\mu_u}(H)=\int_Xu_Hd\mu_u-E_k(H)
 =\frac1m\int_X\log\frac{s^*H^{-1}s}{N}d\mu_u
   +\frac1{mN}\log\det H
 \label{eq-balanced-functional}
\end{equation}
has a minimizer $\widehat H_u$ modulo positive scalar multiplication. It satisfies
\begin{equation}
 \int_X\frac{ss^*}{s^*\widehat H_u^{-1}s}d\mu_u
   =\frac{\widehat H_u}{N},
 \qquad
 0\leq F_{\mu_u}(H_u)-F_{\mu_u}(\widehat H_u)
 \leq\frac{C_X(1+B)}k.
 \label{eq-balanced-gap}
\end{equation}
\end{lem}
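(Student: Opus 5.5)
The plan is to obtain existence by compactness, the balancing identity from the first-order condition, and the gap by sandwiching $\inf F_{\mu_u}$ between the two energy comparisons of Section~\ref{sec-approximation}.

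\emph{Scale invariance.} For $c>0$, $u_{cH}=u_H-\frac1m\log c$ and $\frac1{mN}\log\det(cH)=\frac1{mN}\log\det H+\frac1m\log c$, so $F_{\mu_u}(cH)=F_{\mu_u}(H)$. We may therefore normalize every competitor by $\sup_Xu_H=0$.

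\emph{Existence.} Take a minimizing sequence normalized by $\sup_X u_H=0$.
\begin{enumerate}
\item Then $s^*H^{-1}s\leq N$ everywhere, hence $\tr H^{-1}\leq N$. So every eigenvalue of $H$ is at least $1/N$.
\item Since $u_H\in\PSH_0(X,\omega)$, the entropy control \eqref{eq-gu-entropy-control} gives $\int_Xu_H\,d\mu_u\geq-C_X(1+B)$.
\item Hence along a minimizing sequence
\begin{equation*}
-E_k(H)=F_{\mu_u}(H)-\int_X u_H\,d\mu_u\leq F_{\mu_u}(H_u)+C_X(1+B),
\end{equation*}
which bounds $\log\det H$ from above. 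With the eigenvalue lower bound, all eigenvalues stay in a compact subinterval of $(0,\infty)$.
\item On such a compact set of forms, $s^*H^{-1}s\geq\lambda_{\min}(H^{-1})\rho_m>0$. So $H\mapsto u_H$ is uniformly continuous into $C^0(X)$, and $F_{\mu_u}$ is continuous there.
\end{enumerate}
A minimizer $\widehat H_u$ exists by compactness. The finiteness of $F_{\mu_u}(H_u)$ used in step 3 is checked in the gap argument below.

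\emph{Balancing identity.} Differentiate $t\mapsto F_{\mu_u}(\widehat H+tA)$ at $t=0$ for Hermitian $A$. The derivative is
\begin{equation*}
-\frac1m\int_X\frac{s^*\widehat H^{-1}A\widehat H^{-1}s}{s^*\widehat H^{-1}s}\,d\mu_u+\frac1{mN}\tr(\widehat H^{-1}A).
\end{equation*}
Differentiation under the integral is legitimate by the uniform lower bound on $s^*\widehat H^{-1}s$. Setting this to zero for all $A$ gives
\begin{equation*}
\widehat H^{-1}\Bigl(\int_X\frac{ss^*}{s^*\widehat H^{-1}s}\,d\mu_u\Bigr)\widehat H^{-1}=\widehat H^{-1}/N,
\end{equation*}
which is the first identity in \eqref{eq-balanced-gap}.

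\emph{Gap estimate.} The lower inequality $0\leq F_{\mu_u}(H_u)-F_{\mu_u}(\widehat H_u)$ is just minimality. For the upper inequality, bound the two values separately.

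Upper bound on $F_{\mu_u}(H_u)$. Write
\begin{equation*}
F_{\mu_u}(H_u)\leq\int_Xu\,d\mu_u+\int_X(\beta_ku-u)_+\,d\mu_u-E_k(H_u).
\end{equation*}
By \eqref{eq-positive-error} and \eqref{eq-full-approximation},
\begin{equation*}
F_{\mu_u}(H_u)\leq\int_Xu\,d\mu_u-E(u)+\frac{C_X(1+B)}k.
\end{equation*}
By \eqref{eq-energy-budget}, this value is at most $C_X(1+B)$, which supplies the finiteness used in the existence step.

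Lower bound on $F_{\mu_u}(\widehat H_u)$. Take $\widehat H=\widehat H_u$ normalized by $\sup u_{\widehat H}=0$.
\begin{enumerate}
\item Lemma~\ref{lem-gu-matrix-energy} gives $-E_k(\widehat H)\geq -E(u_{\widehat H})-\frac{C_X}k(-E_k(\widehat H))-\frac{C_X}{k^2}$.
\item By step 3 of the existence argument, $-E_k(\widehat H)\leq C_X(1+B)$.
\item Concavity \eqref{eq-gu-concavity}, applied at $u$ with $v=u_{\widehat H}$, gives $\int_X u_{\widehat H}\,d\mu_u-E(u_{\widehat H})\geq\int_Xu\,d\mu_u-E(u)$.
\end{enumerate}
Combining these,
\begin{equation*}
F_{\mu_u}(\widehat H_u)\geq\int_Xu\,d\mu_u-E(u)-\frac{C_X(1+B)}k.
\end{equation*}
Subtracting this from the upper bound on $F_{\mu_u}(H_u)$ yields \eqref{eq-balanced-gap}.

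\emph{Main obstacle.} The delicate point is the a priori bound on $-E_k(\widehat H)$. It is needed both to make the relative error $C_X/k$ in Lemma~\ref{lem-gu-matrix-energy} of size $(1+B)/k$ and for compactness in the existence step. The argument obtains it only from the entropy control of $\int u_H\,d\mu_u$ for supremum-normalized Fubini--Study potentials, together with the finiteness of $F_{\mu_u}(H_u)$. A second point to check is that \eqref{eq-gu-concavity} applies with $u\in\cE^1$ and $v=u_{\widehat H}$ smooth, which holds because the relevant integral is finite.
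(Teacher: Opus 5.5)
Your proposal is correct and follows essentially the paper's argument. The balancing identity comes from the same first-order computation, and the gap is controlled by exactly the same ingredients: \eqref{eq-positive-error}, \eqref{eq-full-approximation}, Lemma~\ref{lem-gu-matrix-energy} at the supremum-normalized minimizer, concavity \eqref{eq-gu-concavity} at $u$, and \eqref{eq-gu-entropy-control}. You merely arrange these as separate bounds on $F_{\mu_u}(H_u)$ and $F_{\mu_u}(\widehat H_u)$ around $\int_Xu\,d\mu_u-E(u)$, and the remaining differences are cosmetic: for compactness you normalize by $\sup_Xu_H=0$ (via $\tr H^{-1}\leq N$) instead of $\det H=1$ with a top-eigenvector section, and you bound $-E_k(\widehat H_u)$ by comparison with $H_u$ instead of $I_N$. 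Note that $F_{\mu_u}(H)$ is trivially finite for every $H>0$, since $u_H$ is bounded, so that check needs no separate argument.
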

\begin{proof}
\emph{Existence.}
Replacing $H$ by $cH$ subtracts $(\log c)/m$ from both $u_H$ and $E_k(H)$, so $F_{\mu_u}$ is scale invariant. Restrict to $\det H=1$. For a reference-unit section $t$, orthonormality gives $\sup_X\abs{t}_{h_0^m}^2\geq1$. The potential
\begin{equation*}
 \frac1m\left(\log\abs{t}_{h_0^m}^2-\sup_X\log\abs{t}_{h_0^m}^2\right)
\end{equation*}
is normalized $\omega$-psh. Applying \eqref{eq-gu-entropy-control}, including its divisor singularities, gives
$\int_X\log\abs{t}_{h_0^m}^2d\mu_u\geq-mC_X(1+B)$.
Take $t$ corresponding to a unit eigenvector for the largest eigenvalue of $H^{-1}$. The inequality
$s^*H^{-1}s\geq\lambda_{\max}(H^{-1})\abs{t}^2$ implies
\begin{equation*}
 F_{\mu_u}(H)\geq\frac1m\log\lambda_{\max}(H^{-1})
                   -C_X(1+B)-\frac{\log N}{m}.
\end{equation*}
At fixed $k$, bounded sublevels therefore bound the largest eigenvalue of $H^{-1}$. Since $\det H^{-1}=1$, they also bound its smallest eigenvalue away from zero. Thus sublevels are compact in the positive cone. On such a compact set $s^*H^{-1}s$ is bounded above and below by fixed positive multiples of $\rho_m$, so $F_{\mu_u}$ is continuous and attains its minimum.

\emph{The balanced equation.}
For any Hermitian variation $\dot H$, differentiation of \eqref{eq-balanced-functional} gives
\begin{equation*}
 m\,dF_{\mu_u}\vert_H(\dot H)
 =-\int_X\frac{s^*H^{-1}\dot H H^{-1}s}{s^*H^{-1}s}d\mu_u
    +\frac1N\tr(H^{-1}\dot H).
\end{equation*}
At a minimizer this vanishes for every Hermitian $\dot H$; scale invariance permits unrestricted variations. By cyclicity of trace the coefficient paired with $\dot H$ is
\begin{equation*}
 -H^{-1}\left(\int_X\frac{ss^*}{s^*H^{-1}s}d\mu_u\right)H^{-1}
 +\frac1NH^{-1}=0.
\end{equation*}
Indeed a Hermitian matrix with zero trace pairing against every Hermitian matrix is zero. Multiplying this equality on the left and right by $H$ gives
\begin{equation*}
 \int_X\frac{ss^*}{s^*H^{-1}s}d\mu_u=H/N,
\end{equation*}
which proves the first identity in \eqref{eq-balanced-gap}. Equivalently, after conjugating by $\widehat H_u^{-1/2}$,
\begin{equation*}
 \int_X\frac{\widehat H_u^{-1/2}ss^*\widehat H_u^{-1/2}}
 {s^*\widehat H_u^{-1}s}d\mu_u=I_N/N.
\end{equation*}
This is the fixed-measure balanced condition for the normalized evaluation vectors.

\emph{Comparison with $H_u$.}
Rescale $\widehat H_u$ so $\sup_Xu_{\widehat H_u}=0$. Minimality against $I_N$ and \eqref{eq-gu-reference-expansion} give
\begin{equation*}
 \int_Xu_{\widehat H_u}d\mu_u-E_k(\widehat H_u)
 \leq\int_Xu_Id\mu_u=O(k^{-2}).
\end{equation*}
Consequently $-E_k(\widehat H_u)\leq C_X(1+B)$ by \eqref{eq-gu-entropy-control}. Lemma~\ref{lem-gu-matrix-energy} now implies
$E_k(\widehat H_u)-E(u_{\widehat H_u})\leq C_X(1+B)/k$.
By concavity of $E$, we obtain
\begin{align}
 F_{\mu_u}(H_u)-F_{\mu_u}(\widehat H_u)
 &=\int_X(\beta_ku-u)d\mu_u+\int_X(u-u_{\widehat H_u})d\mu_u\notag\\
 &\hspace{8mm}-E_k(H_u)+E_k(\widehat H_u)\notag\\
 &\underalign{\eqref{eq-gu-concavity}}{\leq}
 \int_X(\beta_ku-u)_+d\mu_u+[E(u)-E_k(H_u)]\notag\\
 &\hspace{8mm}+[E_k(\widehat H_u)-E(u_{\widehat H_u})]\notag\\
 &\leq C_X(1+B)/k.
 \label{eq-balanced-cancellation}
\end{align}
The final line uses \eqref{eq-positive-error}, \eqref{eq-full-approximation}, and the preceding bound, respectively. Concavity at $u$ justifies the middle line, and minimality gives nonnegativity. Thus \eqref{eq-balanced-cancellation} proves the remaining assertion of \eqref{eq-balanced-gap}.
\end{proof}

\subsection{A determinantal probability with prescribed marginals}
Projection determinantal processes attach to a finite-dimensional Hilbert space a probability density given by a squared evaluation determinant; see \cite[Definition~3 and Section~2]{HKPV06}. Berman develops this construction for spaces of holomorphic sections and relates their asymptotic densities to Monge--Amp\`ere measures \cite{Ber14, Berm18}. We need a finite-$k$ statement instead: balancing makes each point marginal \emph{exactly} $\mu_u$, not just asymptotic to it. This exact identity is what will control the cost of the integer observations.

For a positive form $H$ define the normalized evaluation Gram matrix on $X^N$ by
\begin{equation}
 (K_H(x_1,\ldots,x_N))_{ij}
 =\frac{s(x_i)^*H^{-1}s(x_j)}
 {\sqrt{(s(x_i)^*H^{-1}s(x_i))(s(x_j)^*H^{-1}s(x_j))}}.
 \label{eq-gram-definition}
\end{equation}
It has diagonal one and determinant between zero and one by Hadamard's inequality. Unit-frame changes conjugate it by a diagonal unitary matrix, leaving its determinant unchanged. We use \emph{ordered} point tuples.

\begin{lem}
\label{lem-determinantal-sampling}
For every $u\in\cK_B$ there is a probability $\Gamma_u$ on $X^N$ such that, for each coordinate projection $p_i:X^N\to X$,
\begin{equation}
 (p_i)_*\Gamma_u=\mu_u,\qquad
 \int_{X^N}\sum_{i=1}^N f(x_i)d\Gamma_u
      =N\int_X f\,d\mu_u
 \label{eq-point-marginals}
\end{equation}
for every nonnegative measurable $f$. Moreover,
\begin{equation}
 \Ent(\Gamma_u\mid\nu^{\otimes N})\leq N(1+B),\qquad
 \int_{X^N}-\log\det K_{H_u}\,d\Gamma_u\leq C_X(1+B)N.
 \label{eq-determinantal-cost}
\end{equation}
The evaluation matrix $S=[s(x_1),\ldots,s(x_N)]$ is invertible almost surely under $\Gamma_u$.
\end{lem}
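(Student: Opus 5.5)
The plan is to build $\Gamma_u$ as the projection determinantal process attached to the \emph{balanced} form $\widehat H_u$ of Lemma~\ref{lem-balanced-gap}, and not to $H_u$ itself. Normalize the evaluation vectors by
\begin{equation*}
 e(x)=\frac{\widehat H_u^{-1/2}s(x)}{\sqrt{s(x)^*\widehat H_u^{-1}s(x)}},\qquad \norm{e(x)}=1 .
\end{equation*}
These are well defined wherever $s(x)\neq0$, which is everywhere since $mL$ is very ample. The balanced identity in \eqref{eq-balanced-gap} then says exactly that $\int_X ee^*\,d\mu_u=I_N/N$. Hence the functions $\varphi_j=\sqrt N\,e_j$ form an orthonormal system of $N$ functions in $L^2(\mu_u)$. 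Following \cite{HKPV06}, I define
\begin{equation*}
 d\Gamma_u=\frac1{N!}\abs{\det[\varphi_j(x_i)]}^2\,d\mu_u^{\otimes N}
 =\frac{N^N}{N!}\det K_{\widehat H_u}(x_1,\ldots,x_N)\,d\mu_u^{\otimes N}.
\end{equation*}
The second equality holds because $\abs{\det[e(x_1),\ldots,e(x_N)]}^2$ is the Gram determinant of the unit vectors $e(x_i)$, and this equals $\det K_{\widehat H_u}$ by \eqref{eq-gram-definition}.

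For the marginals, I will expand the determinant (or use the standard projection-process marginal formula) and integrate out the other $N-1$ variables, using orthonormality of the $\varphi_j$. This gives total mass one and one-point density $\frac1N\sum_j\abs{\varphi_j}^2=\norm{e}^2=1$ with respect to $\mu_u$. The density is symmetric in the ordered tuple, so every marginal $(p_i)_*\Gamma_u$ equals $\mu_u$, and the second identity in \eqref{eq-point-marginals} follows by summation. Almost-sure invertibility of $S$ is then immediate: the density vanishes wherever $\det K_{\widehat H_u}=0$, and on the complement the vectors $s(x_i)$ are independent.

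For the entropy bound, write $\mu_u=f\nu$. Then $\Gamma_u$ has density $\frac{N^N}{N!}\det K_{\widehat H_u}\prod_i f(x_i)$ with respect to $\nu^{\otimes N}$. I will use $\log(N^N/N!)\leq N$ and $\det K\leq1$ (Hadamard). The marginal identity then gives $\int\sum_i\log f(x_i)\,d\Gamma_u=N\,\Ent(\mu_u\mid\nu)\leq NB$, so $\Ent(\Gamma_u\mid\nu^{\otimes N})\leq N(1+B)$. Some care is needed here, because $\log f$ is only $\mu_u$-quasi-integrable. I will split into positive and negative parts; the negative part of $f\log f$ is bounded, so Tonelli applies.

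For the determinant cost, the key identity is
\begin{equation*}
 \log\det K_H=\log\abs{\det S}^2-\log\det H-\sum_i\log\bigl(s(x_i)^*H^{-1}s(x_i)\bigr).
\end{equation*}
Subtracting its instances at $H_u$ and at $\widehat H_u$ and integrating against $\Gamma_u$, the marginal identity turns the sum into $N\int\log(\cdots)\,d\mu_u$. Comparing with \eqref{eq-balanced-functional}, this yields
\begin{equation*}
 \int-\log\det K_{H_u}\,d\Gamma_u=\int-\log\det K_{\widehat H_u}\,d\Gamma_u+mN\bigl[F_{\mu_u}(H_u)-F_{\mu_u}(\widehat H_u)\bigr].
\end{equation*}
The bracket is at most $C_X(1+B)/k$ by \eqref{eq-balanced-gap}, and $m/k$ is bounded, so the second term is at most $C_X(1+B)N$. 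For the first term, rewrite $-\log\det K_{\widehat H_u}$ through the density $g$ of $\Gamma_u$:
\begin{equation*}
 -\log\det K_{\widehat H_u}=\log\frac{N^N}{N!}+\sum_i\log f(x_i)-\log g .
\end{equation*}
Its $\Gamma_u$-integral is $\leq N+NB-\Ent(\Gamma_u\mid\nu^{\otimes N})\leq N(1+B)$. The ratios $s^*H_u^{-1}s/s^*\widehat H_u^{-1}s$ are bounded above and below, since both forms are fixed and positive, so all the logarithms here are integrable.

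The main obstacle I anticipate is this last accounting. One must verify that all the split integrals are finite, so that the subtraction is legitimate, and keep the $\log\abs{\det S}^2$ terms cancelling exactly. The exact-marginal property is what makes $N\int\cdot\,d\mu_u$ appear and match the functional $F_{\mu_u}$ on the nose.
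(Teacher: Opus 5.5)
Your proposal is correct and follows the paper's proof essentially step by step. You use the same determinantal law $\frac{N^N}{N!}\det K_{\widehat H_u}\,d\mu_u^{\otimes N}$ built from the balanced form, and you get exact marginals from the orthonormality $\int_X ee^*\,d\mu_u=I_N/N$. The entropy bound comes, as in the paper, from $\log(N^N/N!)\leq N$ and Hadamard's inequality. The transfer to $H_u$ uses the same Gram-determinant identity, in which $\abs{\det S}^2$ cancels and the exact marginals produce $mN[F_{\mu_u}(H_u)-F_{\mu_u}(\widehat H_u)]$.

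The only difference is cosmetic. The paper bounds $\int-\log\det K_{\widehat H_u}\,d\Gamma_u$ by $N$ using $\Ent(\Gamma_u\mid\mu_u^{\otimes N})\geq0$, and then applies the chain rule to get the $\nu^{\otimes N}$ entropy. Your route through $\Ent(\Gamma_u\mid\nu^{\otimes N})\geq0$ gives $N(1+B)$ instead, which is equally sufficient.
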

\begin{proof}
\emph{The density and its normalizer.}
Use the balanced form from Lemma~\ref{lem-balanced-gap} and set
\begin{equation*}
 z(x)=\frac{\widehat H_u^{-1/2}s(x)}
                  {\sqrt{s(x)^*\widehat H_u^{-1}s(x)}}.
\end{equation*}
Then $\norm{z(x)}=1$, and \eqref{eq-balanced-gap} says
$\int_Xz_a\overline{z_b}\,d\mu_u=\delta_{ab}/N$.
Define
\begin{equation}
 d\Gamma_u=\frac{N^N}{N!}\det K_{\widehat H_u}\,
               d\mu_u(x_1)\cdots d\mu_u(x_N).
 \label{eq-determinantal-law}
\end{equation}
To verify the constant, expand the squared determinant:
\begin{align*}
 &\int_{X^N}\abs{\det[z(x_1),\ldots,z(x_N)]}^2d\mu_u^{\otimes N}\\
 &\quad=\sum_{\sigma,\tau\in\mathfrak S_N}
 \sgn(\sigma)\sgn(\tau)
 \prod_{i=1}^N\int_Xz_{\sigma(i)}(x)\overline{z_{\tau(i)}(x)}d\mu_u(x)\\
 &\quad=\sum_{\sigma\in\mathfrak S_N}N^{-N}=\frac{N!}{N^N}.
\end{align*}
The first equality is the Leibniz formula and finite-product integration; the second uses the balanced identity \eqref{eq-balanced-gap}, namely $\int z_a\overline{z_b}d\mu_u=\delta_{ab}/N$. Since $K_{\widehat H_u}$ is the Gram matrix of these columns, \eqref{eq-determinantal-law} has total mass one.

\emph{The exact marginals.}
Fix $x_1$. A unitary rotation sends $z(x_1)$ to $e_1$ and preserves the second-moment matrix. Expand the determinant along its first column and integrate the resulting $(N-1)\times(N-1)$ minor over $x_2,\ldots,x_N$. The same permutation computation gives
\begin{equation*}
 \int_{X^{N-1}}\det K_{\widehat H_u}(x_1,\ldots,x_N)
             \prod_{i=2}^Nd\mu_u(x_i)=\frac{(N-1)!}{N^{N-1}}.
\end{equation*}
Multiplying by $N^N/N!$ gives one. Thus for every bounded measurable $f$,
\begin{equation*}
 \int_{X^N}f(x_1)d\Gamma_u
 =\frac{N^N}{N!}\frac{(N-1)!}{N^{N-1}}\int_Xf\,d\mu_u
 =\int_Xf\,d\mu_u.
\end{equation*}
Symmetry gives each marginal; monotone convergence and summation give \eqref{eq-point-marginals}.

\emph{Entropy of the point law.}
The determinant is positive $\Gamma_u$-almost surely by its density. The function $-t\log t$ is bounded on $[0,1]$, 
so $\log\det K_{\widehat H_u}$ is integrable against $\Gamma_u$. 
Taking the logarithm of \eqref{eq-determinantal-law},
\begin{equation*}
 0\leq\Ent(\Gamma_u\mid\mu_u^{\otimes N})
 =\log\frac{N^N}{N!}+\int\log\det K_{\widehat H_u}\,d\Gamma_u
 \leq\log\frac{N^N}{N!}\leq N.
\end{equation*}
The last inequality follows from $\log N!\geq N\log N-N$, obtained by integrating $\log t$ from $1$ to $N$. Rearranging the nonnegativity inequality gives
$\int-\log\det K_{\widehat H_u}\,d\Gamma_u\leq N$.

Write $p=d\mu_u/d\nu$. Finite entropy implies $\log p\in L^1(\mu_u)$: on $p<1$, $-p\log p\leq1/e$, and the positive part is controlled by entropy. Since
\begin{equation*}
 \log\frac{d\Gamma_u}{d\nu^{\otimes N}}
 =\log\frac{d\Gamma_u}{d\mu_u^{\otimes N}}+\sum_{i=1}^N\log p(x_i)
 \quad\Gamma_u\text{-almost surely},
\end{equation*}
integration and \eqref{eq-point-marginals} give
\begin{equation*}
 \Ent(\Gamma_u\mid\nu^{\otimes N})
 =\Ent(\Gamma_u\mid\mu_u^{\otimes N})+N\Ent(\mu_u\mid\nu)
 \leq N(1+B).
\end{equation*}

\emph{Transfer to the Hilbert form $H_u$.}
When the evaluation matrix $S$ is invertible, taking determinants 
in \eqref{eq-gram-definition} gives
\begin{equation}
 \det K_H=
 \frac{(\det H)^{-1}\abs{\det S}^2}
 {\prod_{i=1}^N s(x_i)^*H^{-1}s(x_i)}.
 \label{eq-gram-determinant}
\end{equation}
Invertibility holds $\Gamma_u$-almost surely because its density vanishes when $\det K_{\widehat H_u}=0$. Taking the ratio of \eqref{eq-gram-determinant} for $H_u$ and $\widehat H_u$ cancels $\abs{\det S}^2$ and gives
\begin{align*}
 -\log\det K_{H_u}+\log\det K_{\widehat H_u}
 &=\log\frac{\det H_u}{\det\widehat H_u}
   +\sum_{i=1}^N\log\frac{s(x_i)^*H_u^{-1}s(x_i)}{s(x_i)^*\widehat H_u^{-1}s(x_i)}\\
 &=mN[E_k(\widehat H_u)-E_k(H_u)]
   +m\sum_{i=1}^N(\beta_ku-u_{\widehat H_u})(x_i).
\end{align*}
The second line uses \eqref{eq-gu-fs}. Integrating the sum by \eqref{eq-point-marginals} and combining its terms according to \eqref{eq-balanced-functional} gives
\begin{equation}
 \int-\log\det K_{H_u}\,d\Gamma_u
 =\int-\log\det K_{\widehat H_u}\,d\Gamma_u
  +mN[F_{\mu_u}(H_u)-F_{\mu_u}(\widehat H_u)].
 \label{eq-gram-transfer}
\end{equation}
Here $\log(s^*H_u^{-1}s/s^*\widehat H_u^{-1}s)$ is bounded on $X$ for the two fixed positive forms, so its integration is legitimate. Substituting \eqref{eq-balanced-gap} into \eqref{eq-gram-transfer}, and using the bounded ratio $m/k$, proves the second bound of \eqref{eq-determinantal-cost}.
\end{proof}

\subsection{The reference distribution: sample points, quantized observations, 
and Gaussian section coefficients} 
The sample space is the Borel product
\begin{equation*}
 \Omega_k=X^N\times\mathbb Z^N\times\mathbb C^{(n+1)N}.
\end{equation*}
Its coordinates are $(x,\alpha,G)$, with $x=(x_1,\ldots,x_N)$, $\alpha=(\alpha_1,\ldots,\alpha_N)$, and $G$ a numerical coefficient matrix. The last factor has complex dimension $(n+1)N$. The decoder is
\begin{equation}
 T_k(x,\alpha,G)=
 \begin{cases}u_G,&\text{if the sections specified by }G\text{ have no common zero},\\
 0,&\text{otherwise}.
 \end{cases}
 \label{eq-common-decoder}
\end{equation}
Here $u_G$ is \eqref{eq-random-potential}. The points and integers specify a reference distribution of coefficients; the decoded potential itself uses only $G$.

\begin{prop}
\label{prop-common-law}
For every $v\in\cK_B$ and $k\geq k_0$ there is a probability $Q_{v,k}$ on $\Omega_k$ such that for every $u\in\cK_B$ there is a probability $P_{u;v,k}$ satisfying
\begin{align}
 \E_{P_{u;v,k}}d_1(u,T_k)&\leq C_X(1+B)/k,
 \label{eq-common-error}\\
 \Ent(P_{u;v,k}\mid Q_{v,k})&\leq C_XN_k[1+B+kd_1(u,v)].
 \label{eq-common-cost}
\end{align}
\end{prop}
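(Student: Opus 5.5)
The plan is to build $Q_{v,k}$ as a three-stage reference law and to take $P_{u;v,k}$ as the matching three-stage law driven by $u$. The entropy is then controlled by the chain rule, one summand per stage. Both the point and integer stages involve $v$ only through the interpolation, so the only stage whose cost depends on $u$ through $v$ is the integer one. The third stage uses a Gaussian relative entropy together with the Gram determinant bound of Lemma~\ref{lem-determinantal-sampling}.

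\emph{Construction.} Under $Q_{v,k}$ the three coordinates are drawn in order.
\begin{enumerate}
\item Draw $x\sim\nu^{\otimes N}$.
\item Draw $\alpha\in\mathbb Z^N$ independently with i.i.d.\ two-sided geometric law $q(a)=c_0e^{-\abs a}$, where $c_0=(e-1)/(e+1)$.
\item Conditionally on $(x,\alpha)$, draw $G\sim P_{H'(x,\alpha)}$ from \eqref{eq-gaussian-law}.
\end{enumerate}
The interpolated form is defined as follows. If $S=[s(x_1),\dots,s(x_N)]$ is invertible and every $v(x_i)$ is finite, put
\begin{equation*}
 D=\diag\bigl(N e^{m v(x_i)+\alpha_i}\bigr),\qquad H'^{-1}=S^{-*}DS^{-1},\qquad\text{so } H'=SD^{-1}S^*.
\end{equation*}
This gives $s(x_i)^*H'^{-1}s(x_i)=D_{ii}$; otherwise put $H'=I_N$. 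The map $(x,\alpha)\mapsto H'$ is Borel and depends on $v$ but not on $u$.

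Under $P_{u;v,k}$ the coordinates are drawn as follows.
\begin{enumerate}
\item Draw $x\sim\Gamma_u$ from Lemma~\ref{lem-determinantal-sampling}.
\item Set deterministically $\alpha_i=\lfloor m(\beta_ku-v)(x_i)\rfloor$.
\item Draw $G\sim P_{H_u}$ independently of $(x,\alpha)$.
\end{enumerate}
Since $\mu_u\ll\nu$ and $\{v=-\infty\}$ is $\nu$-null, the marginals \eqref{eq-point-marginals} show that $v(x_i)$ is finite and $S$ is invertible $\Gamma_u$-almost surely. Hence the first branch of the definition of $H'$ is the one used.

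\emph{Error.} The decoder uses only $G$, and $G$ has law $P_{H_u}$. Therefore \eqref{eq-common-error} is exactly \eqref{eq-random-approximation}.

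\emph{Cost.} The chain rule gives
\begin{equation*}
 \Ent(P_{u;v,k}\mid Q_{v,k})=\Ent(\Gamma_u\mid\nu^{\otimes N})+\E\sum_i\bigl(\abs{\alpha_i}-\log c_0\bigr)+\E\,\Ent\bigl(P_{H_u}\mid P_{H'(x,\alpha)}\bigr).
\end{equation*}
The three terms are handled separately.
\begin{enumerate}
\item The first term is at most $N(1+B)$ by \eqref{eq-determinantal-cost}.
\item For the second, use $\abs{\alpha_i}\leq m\abs{\beta_ku-v}(x_i)+1$ and the exact marginals \eqref{eq-point-marginals}. This bounds it by
\begin{equation*}
 CN+Nm\int_X\abs{\beta_ku-v}\,d\mu_u\leq CN+Nm\Bigl[\int_X\abs{\beta_ku-u}\,d\mu_u+I_1(u,v)\Bigr].
\end{equation*}
By Proposition~\ref{prop-full-approximation} (in its $I_1$ form) and \eqref{eq-d1-i1}, this is at most $C_XN[1+B+k\,d_1(u,v)]$.
\item For the third, $P_{H}$ is a product of $n+1$ complex Gaussians with covariance $H^{-1}$, so
\begin{equation*}
 \Ent(P_{H_u}\mid P_{H'})=(n+1)\bigl[\tr(H'H_u^{-1})-N-\log\det(H'H_u^{-1})\bigr].
\end{equation*}
By the definition of $\beta_k$, $s(x_i)^*H_u^{-1}s(x_i)=Ne^{m\beta_ku(x_i)}$. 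Hence the ratio $r_i:=s(x_i)^*H_u^{-1}s(x_i)/D_{ii}=e^{m(\beta_ku-v)(x_i)-\alpha_i}$ lies in $[1,e)$.
 \begin{itemize}
 \item[$\circ$] The trace term is $\tr(D^{-1}S^*H_u^{-1}S)=\sum_ir_i\leq eN$.
 \item[$\circ$] By \eqref{eq-gram-determinant}, $-\log\det(H'H_u^{-1})=-\log\det K_{H_u}-\sum_i\log r_i\leq-\log\det K_{H_u}$.
 \end{itemize}
 Integrating over $\Gamma_u$ with the second bound in \eqref{eq-determinantal-cost} gives $C_X(1+B)N$.
\end{enumerate}
Summing the three terms proves \eqref{eq-common-cost}.

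\emph{Main obstacle.} I expect the delicate step to be the third stage. The interpolated form must reproduce the diagonal values of $H_u^{-1}$ at the sample points up to a bounded factor. The residual off-diagonal mismatch must then be paid for exactly by $-\log\det K_{H_u}$, which is where the balancing and the exact marginals of Lemma~\ref{lem-determinantal-sampling} enter. The remaining care is routine: Borel measurability of $H'$, and verifying that the singular branches have $\Gamma_u$-measure zero.
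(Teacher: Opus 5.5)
Your proposal is correct and follows the paper's proof essentially step by step. It uses the same three-stage reference law: points from $\nu^{\otimes N}$, two-sided geometric integers, and Gaussian coefficients with the interpolated precision matrix $S D^{-1}S^*$. It also uses the same input law built from $\Gamma_u$ and $P_{H_u}$, and the same chain-rule split into point, integer, and Gaussian costs.

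The only differences are cosmetic. You round down rather than up, so your ratios $r_i$ lie in $[1,e)$ and the factor $e$appears in the trace bound rather than in the log-determinant bound. Also, almost sure invertibility of $S$ under $\Gamma_u$ is stated directly in Lemma~\ref{lem-determinantal-sampling}; it does not follow from the one-point marginals, as your wording suggests.
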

\begin{proof}
\emph{Step 1: relative entropy between coefficient laws.}
From \eqref{eq-gaussian-law}, for positive Hermitian $H,A$,
\begin{equation*}
 \log\frac{dP_H}{dP_A}(G)
 =(n+1)\log\frac{\det H}{\det A}
       +\tr(G(A-H)G^*).
\end{equation*}
Since $G=ZH^{-1/2}$ and $\E(Z^*Z)=(n+1)I_N$, one has
$\E_{P_H}(G^*G)=(n+1)H^{-1}$. Taking expectations gives
\begin{equation}
 \Ent(P_H\mid P_A)
 =(n+1)\left[\log\frac{\det H}{\det A}
                  +\tr(AH^{-1})-N\right].
 \label{eq-gaussian-kl}
\end{equation}
The logarithmic determinant is the term to be estimated by determinantal sampling.

\emph{Step 2: define $Q_{v,k}$.}
Use the unit frames fixed in \eqref{eq-evaluation-column} to regard
$S(x)=[s(x_1),\ldots,s(x_N)]$ as a numerical $N\times N$ matrix. For an invertible $S(x)$ and finite $v(x_i)$, define
\begin{equation}
 A_{v,k}(x,\alpha)
 =S(x)\diag
 \left(\frac{e^{-mv(x_1)-\alpha_1}}{N},\ldots,
       \frac{e^{-mv(x_N)-\alpha_N}}{N}\right)S(x)^*.
 \label{eq-interpolation-matrix}
\end{equation}
This is a positive Hermitian precision matrix. In particular its inverse is
\begin{equation*}
 A_{v,k}(x,\alpha)^{-1}
 =(S(x)^{-1})^*
 \diag(Ne^{mv(x_1)+\alpha_1},\ldots,Ne^{mv(x_N)+\alpha_N})
 S(x)^{-1}.
\end{equation*}
Thus the associated Fubini--Study potential interpolates the prescribed values $v(x_i)+\alpha_i/m$ at the sampled points.
The scalar entries of $S$ are taken in the chosen unit frames. Changing any frame multiplies the corresponding column of $S$ by a phase; that phase cancels in \eqref{eq-interpolation-matrix}.

Set
\begin{equation*}
 \pi(a)=\frac{1-e^{-1}}{1+e^{-1}}e^{-\abs{a}},\qquad a\in\mathbb Z.
\end{equation*}
Define $Q_{v,k}$ by the explicit integration formula
\begin{equation}
 \int_{\Omega_k}\Phi\,dQ_{v,k}
 =\int_{X^N}\sum_{\alpha\in\mathbb Z^N}
       \left(\prod_{i=1}^N\pi(\alpha_i)\right)
       \int\Phi(x,\alpha,G)dP_{A_{v,k}(x,\alpha)}(G)
       d\nu^{\otimes N}(x)
 \label{eq-reference-product-law}
\end{equation}
for nonnegative measurable $\Phi$. Thus points are first sampled from $\nu^{\otimes N}$, integers independently from $\pi$, and coefficients from the indicated conditional Gaussian law.

The exceptional configurations in \eqref{eq-interpolation-matrix} have $\nu^{\otimes N}$ measure zero. Evaluation functionals span the dual section space, so some evaluation configuration is invertible. Its determinant is a nonzero holomorphic section over connected $X^N$, with a zero set of volume zero. Also $v$ is finite almost everywhere. On this exceptional set define $A_{v,k}=I_N$. The chosen Borel frames and the continuous Gaussian density \eqref{eq-gaussian-law} make \eqref{eq-reference-product-law} a measurable probability kernel and hence a probability law.

\emph{Step 3: define the input law and its coefficient marginal.}
For $u\in\cK_B$ put
\begin{equation}
 \alpha_i^{u,v}(x)=
 \left\lceil m\bigl(\beta_ku(x_i)-v(x_i)\bigr)\right\rceil,\quad 1\leq i\leq N.
 \label{eq-integer-marks}
\end{equation}
These integers are finite $\Gamma_u$-almost surely because $\Gamma_u\ll\nu^{\otimes N}$. Use $\Gamma_u$ from Lemma~\ref{lem-determinantal-sampling} and define
\begin{equation}
 \int_{\Omega_k}\Phi\,dP_{u;v,k}
 =\int_{X^N}\int
       \Phi(x,\alpha^{u,v}(x),G)dP_{H_u}(G)d\Gamma_u(x).
 \label{eq-input-product-law}
\end{equation}
For every Borel coefficient set $D$, the coefficient marginal of \eqref{eq-input-product-law} is
\begin{equation}
 P_{u;v,k}(X^N\times\mathbb Z^N\times D)
 =\int_{X^N}P_{H_u}(D)d\Gamma_u=P_{H_u}(D).
 \label{eq-coefficient-marginal}
\end{equation}
Thus \eqref{eq-random-approximation}, \eqref{eq-common-decoder}, and \eqref{eq-coefficient-marginal} prove \eqref{eq-common-error}.

\emph{Step 4: the cost of the integer observations.}
Since $\abs{\lceil t\rceil}\leq\abs{t}+1$, the exact point marginals \eqref{eq-point-marginals} give
\begin{align}
 \int\sum_{i=1}^N\abs{\alpha_i^{u,v}}d\Gamma_u
 &\leq N+mN\int_X\abs{\beta_ku-v}d\mu_u\notag\\
 &\leq N+mN\left[\int_X\abs{\beta_ku-u}d\mu_u
                         +\int_X\abs{u-v}d\mu_u\right]\notag\\
 &\underalign{\eqref{eq-d1-i1}}{\leq}
 N+C_nmN[d_1(\beta_ku,u)+d_1(u,v)]\notag\\
 &\underalign{\eqref{eq-full-approximation}}{\leq}
 C_XN[1+B+kd_1(u,v)].
 \label{eq-mark-cost}
\end{align}
The second line is the triangle inequality pointwise; the third uses the definition of $I_1$ to bound each displayed endpoint integral. The final bound also uses $m/k\leq1+\ell/k_0$. Exact marginals, rather than independent reference points, are essential here.

\emph{Step 5: the conditional Gaussian cost.}
For the integers \eqref{eq-integer-marks}, upward rounding gives
\begin{equation*}
 s(x_i)^*H_u^{-1}s(x_i)
 \leq Ne^{mv(x_i)+\alpha_i^{u,v}}
 \leq e\,s(x_i)^*H_u^{-1}s(x_i).
\end{equation*}
Substitute \eqref{eq-interpolation-matrix}, and use \eqref{eq-gram-determinant} for the determinant. The two required bounds are
\begin{align}
 \tr(A_{v,k}H_u^{-1})
 &=\sum_{i=1}^N
 \frac{s(x_i)^*H_u^{-1}s(x_i)}{Ne^{mv(x_i)+\alpha_i^{u,v}}}\leq N,\notag\\
 \log\frac{\det H_u}{\det A_{v,k}}
 &=\sum_{i=1}^N\log
 \frac{Ne^{mv(x_i)+\alpha_i^{u,v}}}{s(x_i)^*H_u^{-1}s(x_i)}
      -\log\det K_{H_u}\notag\\
 &\leq N-\log\det K_{H_u}.
 \label{eq-rounded-gaussian-cost}
\end{align}
Consequently \eqref{eq-gaussian-kl} and \eqref{eq-determinantal-cost} imply
\begin{equation*}
 \int_{X^N}\Ent(P_{H_u}\mid P_{A_{v,k}(x,\alpha^{u,v})})d\Gamma_u(x)
 \leq C_X(1+B)N.
\end{equation*}

\emph{Step 6: combine the three costs.}
On the support of $P_{u;v,k}$, the likelihood ratio factors as
\begin{equation*}
 \frac{dP_{u;v,k}}{dQ_{v,k}}(x,\alpha^{u,v}(x),G)
 =\frac{d\Gamma_u}{d\nu^{\otimes N}}(x)
   \prod_{i=1}^N\frac1{\pi(\alpha_i^{u,v}(x))}
   \frac{dP_{H_u}}{dP_{A_{v,k}(x,\alpha^{u,v}(x))}}(G).
\end{equation*}
All conditional Gaussian densities are positive. Taking logarithms and integrating \eqref{eq-input-product-law} gives
\begin{align}
 \Ent(P_{u;v,k}\mid Q_{v,k})
 &=\Ent(\Gamma_u\mid\nu^{\otimes N})
   +\int\sum_{i=1}^N[-\log\pi(\alpha_i^{u,v})]d\Gamma_u\notag\\
 &\quad+\int\Ent(P_{H_u}\mid
                   P_{A_{v,k}(x,\alpha^{u,v})})d\Gamma_u.
 \label{eq-product-kl}
\end{align}
The first term is at most $N(1+B)$. The second is the sum of $\abs{\alpha_i^{u,v}}$ plus a fixed constant per coordinate, and is bounded by \eqref{eq-mark-cost}. The third was bounded in Step~5. Substitution proves \eqref{eq-common-cost}.

Finally, the set of basepoint-free coefficient matrices is open, and $G\mapsto u_G$ is locally continuous in the smooth topology there, hence in $d_1$ by \eqref{eq-d1-i1}. Its complement has zero measure under every $P_H$, and therefore under the mixture \eqref{eq-reference-product-law}. This proves measurability of the decoder with its specified value on the exceptional set. Decoder outputs are not asserted to have entropy at most $B$; the next section selects the final centers inside $\cK_B$.
\end{proof}

\section{From relative entropy to metric covers}
\label{sec-coding}
The following lemma is a direct consequence of Fano's inequality \cite[Example~II.4, equation~(11)]{Gun11} and the packing--covering argument. We give its elementary binary data-processing proof.

\begin{lem}
\label{lem-relative-entropy-cover}
Let $(M,d)$ be a metric space and $U\subset M$ nonempty. Let $(\Omega,\cF)$ be a measurable space, $Q$ a probability on it, and $T:\Omega\to M$ a map for which $\omega\mapsto d(u,T(\omega))$ is measurable for every $u\in U$. Suppose for each $u\in U$ there is a probability $P_u$ on $(\Omega,\cF)$ with
\begin{equation}
 \E_{P_u}d(u,T)\leq r,\qquad
 \Ent(P_u\mid Q)\leq L,\qquad r>0.
 \label{eq-abstract-code-hypothesis}
\end{equation}
Then $U$ has an internal cover by closed $4r$-balls with at most $\exp(2(L+\log2))$ centers.
\end{lem}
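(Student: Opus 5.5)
Choose a maximal subset $\{u_1,\ldots,u_K\}\subset U$ whose points are pairwise separated, $d(u_i,u_j)>4r$ for $i\neq j$. By maximality, every point of $U$ lies within $4r$ of some $u_i$, so the closed $4r$-balls around the $u_i$ form an internal cover. It therefore suffices to show $K\leq\exp(2(L+\log2))$. If $K=1$ there is nothing to prove, so assume $K\geq2$.

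For each $i$ let $A_i=\{\omega:d(u_i,T(\omega))<2r\}$; these sets are measurable by hypothesis. They are pairwise disjoint: a common point $\omega$ would give $d(u_i,u_j)<4r$ by the triangle inequality through $T(\omega)$. Markov's inequality and \eqref{eq-abstract-code-hypothesis} give
\begin{equation*}
 P_{u_i}(A_i)\geq 1-\frac{\E_{P_{u_i}}d(u_i,T)}{2r}\geq\frac12 .
\end{equation*}
Since the $A_i$ are disjoint and $Q$ is a probability, $\sum_iQ(A_i)\leq1$. Hence some index $i$ satisfies $Q(A_i)\leq1/K$.

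Apply the data-processing inequality for relative entropy to the measurable map $\omega\mapsto\mathds{1}_{A_i}(\omega)$. Writing $p=P_{u_i}(A_i)\geq1/2$ and $q=Q(A_i)\leq1/K$, this gives
\begin{equation*}
 L\geq\Ent(P_{u_i}\mid Q)\geq p\log\frac pq+(1-p)\log\frac{1-p}{1-q}.
\end{equation*}
To bound the binary term from below, use $(1-p)\log\frac{1}{1-q}\geq0$, and note that $p\log p+(1-p)\log(1-p)\geq-\log2$. Together with $p\geq1/2$ this yields
\begin{equation*}
 L\geq p\log\frac1q-\log2\geq\frac12\log K-\log2 .
\end{equation*}
Therefore $\log K\leq2(L+\log2)$, which is the claimed bound on the number of centers.

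The only steps needing care are the measurability of the $A_i$ and the data-processing inequality for a two-point partition. The latter follows from the log-sum inequality, or equivalently from the variational formula \eqref{eq-entropy-variational} restricted to bounded Borel tests $g=a\mathds{1}_{A_i}$ with $a$ optimized. Neither should be a real obstacle. The main point is the choice of the $2r$ threshold, so that Markov's inequality gives probability at least $1/2$ and the balls stay disjoint at separation $4r$.
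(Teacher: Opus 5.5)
Your proof is correct and is essentially the paper's argument: Markov's inequality at threshold $2r$ gives success probability at least $1/2$, the binary data-processing bound $\Ent(P\mid Q)\geq p\log(1/q)-\log2$ turns this into control of the $Q$-mass of the success event, and $4r$-separation makes these events disjoint. You apply pigeonhole before the entropy bound rather than first bounding every $Q(F_u)$ from below and then summing, and you use a maximal separated set where the paper uses greedy selection. For the latter, do not presuppose that a maximal separated set is finite: your bound applies to every finite $4r$-separated subfamily of $U$, so one of maximal cardinality exists and is inclusion-maximal; the paper secures the same point by noting that its greedy procedure terminates without any compactness of $U$.
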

\begin{proof}
\emph{Step 1: a successful-decoding event has positive $Q$ mass.}
For $u\in U$ define
\begin{equation*}
 F_u=\{\omega\in\Omega:d(u,T(\omega))\leq2r\}\in\cF .
\end{equation*}
Markov's inequality applied to \eqref{eq-abstract-code-hypothesis} gives $P_u(F_u)\geq1/2$. To compare this with $Q(F_u)$, take any $P\ll Q$ and event $F\in\cF$. Write $f=dP/dQ$, $p=P(F)$, and $q=Q(F)$. For $0<q<1$, Jensen applied to the probability $Q\vert_F/q$ gives
\begin{equation*}
 \int_Ff\log f\,dQ
 =q\int_F f\log f\,\frac{dQ}{q}
 \geq q\left(\frac pq\right)\log\left(\frac pq\right)
 =p\log(p/q).
\end{equation*}
The same calculation on $\Omega\setminus F$ gives
\begin{align*}
 \Ent(P\mid Q)&\geq p\log(p/q)+(1-p)\log((1-p)/(1-q))\\
 &\geq p\log(1/q)-\log2.
\end{align*}
In the second line we use
$p\log p+(1-p)\log(1-p)\geq-\log2$ and
$-(1-p)\log(1-q)\geq0$. When $0<q<1$, the cases $p=0,1$ are included by $0\log0=0$. If $q=0$ or $q=1$, absolute continuity forces $p=q$, and the binary relative entropy is zero. In our application $p\geq1/2$, so $q=0$ is impossible and $q=1$ already satisfies the desired bound. Applying the displayed inequality when $0<q<1$ gives
\begin{equation}
 Q(F_u)\geq\exp[-2(L+\log2)].
 \label{eq-success-mass}
\end{equation}

\emph{Step 2: packing implies disjoint events.}
If $u_1,\ldots,u_I$ have pairwise distances greater than $4r$, then $F_{u_i}\cap F_{u_j}=\varnothing$ for $i\ne j$. Otherwise an $\omega$ in the intersection would give
\begin{equation*}
 d(u_i,u_j)\leq d(u_i,T(\omega))+d(T(\omega),u_j)\leq4r,
\end{equation*}
a contradiction. Equation \eqref{eq-success-mass} therefore implies
\begin{equation*}
 1\geq Q\left(\bigcup_{i=1}^IF_{u_i}\right)
   =\sum_{i=1}^IQ(F_{u_i})
   \geq I\,e^{-2(L+\log2)}.
\end{equation*}
Thus $I\leq e^{2(L+\log2)}$.

\emph{Step 3: select centers in $U$.}
Choose $u_1\in U$. If the closed balls about $u_1,\ldots,u_j$ do not cover $U$, choose
\begin{equation*}
 u_{j+1}\in U\setminus\bigcup_{i=1}^j\overline B_d(u_i,4r).
\end{equation*}
Each chosen point is more than $4r$ from its predecessors, so Step~2 forces this procedure to terminate after at most $e^{2(L+\log2)}$ choices. At termination the displayed complement is empty. These are the required internal centers; compactness of $U$ was not assumed.
\end{proof}


\begin{prop}
\label{prop-local-cover}
There is a geometric constant $a>0$ such that, for every $v\in\cK_B$, $k\geq k_0$, and nonempty
$U\subset\cK_B\cap\overline B_{d_1}(v,\delta)$, the set $U$ has an internal closed-ball cover of radius $a(1+B)/k$ and logarithmic cardinality at most
\begin{equation}
 C_XN_k(1+B+k\delta).
 \label{eq-local-cover-cost}
\end{equation}
\end{prop}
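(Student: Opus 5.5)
The plan is to combine Proposition~\ref{prop-common-law} with the abstract coding Lemma~\ref{lem-relative-entropy-cover}, applied in the metric space $(\cE^1,d_1)$ with the subset $U$.

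\emph{Setting up the data.} Take the measurable space $\Omega_k=X^N\times\mathbb Z^N\times\mathbb C^{(n+1)N}$ with its Borel structure. Let $Q=Q_{v,k}$ be the reference law built from the coarse center $v$, and let $T=T_k$ be the decoder \eqref{eq-common-decoder}. For each $u\in U$, take $P_u=P_{u;v,k}$. The decoder values lie in $\cE^1$: they are smooth semipositive potentials, or the zero potential on the null exceptional set. The final paragraph of the proof of Proposition~\ref{prop-common-law} shows that $\omega\mapsto d_1(u,T_k(\omega))$ is measurable for every fixed $u$. Indeed, $G\mapsto u_G$ is continuous into $d_1$ on the open basepoint-free set, and $d_1(u,\cdot)$ is continuous.

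\emph{Verifying the hypotheses.} By \eqref{eq-common-error}, the expected error is at most $r:=C_X(1+B)/k$. Since $u\in\overline B_{d_1}(v,\delta)$, \eqref{eq-common-cost} gives
\[
\Ent(P_u\mid Q)\leq L:=C_XN_k(1+B+k\delta).
\]
Both bounds are uniform in $u\in U$, and $Q$ depends only on $v$, $k$ and $B$. Since $r>0$, Lemma~\ref{lem-relative-entropy-cover} applies. It produces an internal cover of $U$ by closed $4r$-balls with at most $\exp(2(L+\log2))$ centers. Set $a:=4C_X$, where $C_X$ is the constant in \eqref{eq-common-error}; the resulting radius $a(1+B)/k$ is then independent of $v$, $\delta$ and $B$.

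\emph{The logarithmic cardinality.} The count is $2L+2\log2$. The term $2\log2$ is absorbed into $C_XN_k(1+B+k\delta)$ because $N_k\geq1$ and $1+B\geq1$, so enlarging $C_X$ suffices.

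The only point needing care is the bookkeeping. The constants in \eqref{eq-common-error} and \eqref{eq-common-cost} must be geometric, meaning independent of $u$, $v$, $B$ and $k\geq k_0$, and the ambient metric space must contain the decoder outputs. Both are supplied by the earlier statements, so no real obstacle is expected. The substance lies in Proposition~\ref{prop-common-law}.
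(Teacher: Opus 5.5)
Your proposal is correct and is essentially the paper's own proof: you apply Lemma~\ref{lem-relative-entropy-cover} with $M=\cE^1$, $d=d_1$, $Q=Q_{v,k}$, $T=T_k$, $r=C_X(1+B)/k$ and $L=C_XN_k(1+B+k\delta)$ from Proposition~\ref{prop-common-law}, set $a=4C_X$, and absorb the additive $2\log2$ using $N_k\geq1$ and $1+B\geq1$. One minor wording slip: the constant $a$ is independent of $v$, $\delta$ and $B$, but the radius $a(1+B)/k$ itself depends on $B$.
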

\begin{proof}
Fix a geometric constant $C_X>0$ valid in \eqref{eq-common-error}. In Lemma~\ref{lem-relative-entropy-cover}, take
\begin{equation*}
 M=\cE^1(X,\omega),\quad d=d_1,\quad
 Q=Q_{v,k},\quad T=T_k,\quad
 r=C_X(1+B)/k .
\end{equation*}
Equation \eqref{eq-common-cost} gives
$L=C_XN_k(1+B+k\delta)$ for all $u\in U$.
The lemma's closed-ball radius is $4r$, so fix $a=4C_X$ with this choice of constant. Its logarithmic cardinality is at most $2(L+\log2)$; the fixed additive term is absorbed into \eqref{eq-local-cover-cost} because $N_k\geq1$ and $1+B\geq1$.
\end{proof}

\begin{proof}[Proof of the upper bound in Theorem~\ref{thm-d1-main}]
We construct finite sets of centers $\mathscr {V}_j\subset\cK_B$ and define their cardinalities by $M_j:=\#\mathscr{V}_j$. Set $k_j=2^jk_0$ and let the required radius at level $j$ be $a(1+B)/k_j$.

\emph{Initial cover.}
The zero potential belongs to $\cK_B$. By \eqref{eq-gu-ordered} and \eqref{eq-energy-budget},
$d_1(u,0)=-E(u)\leq C_X(1+B)$ for all $u\in\cK_B$. Apply Proposition~\ref{prop-local-cover} with $v=0$, $\delta=C_X(1+B)$, $k=k_0$, and $U=\cK_B$. It yields a finite set $\mathscr V_0$ with
\begin{equation*}
 \cK_B\subset\bigcup_{v\in\mathscr{V}_0}
             \overline B_{d_1}(v,a(1+B)/k_0),\qquad
 \log M_0\leq C_X(1+B)k_0^n .
\end{equation*}

\emph{Refinement.}
For each $v\in\mathscr{V}_{j-1}$ define its part of the input class by
\begin{equation*}
 U_{j,v}:=\cK_B\cap
           \overline B_{d_1}(v,a(1+B)/k_{j-1}).
\end{equation*}
Apply the local proposition to $U_{j,v}$ at $k_j$. It gives internal centers $\mathscr{W}_{j,v}\subset U_{j,v}$ with radius $a(1+B)/k_j$ and
\begin{align*}
 \log\#\mathscr{W}_{j,v}
 &\leq C_XN_{k_j}\left(1+B+k_j\frac{a(1+B)}{k_{j-1}}\right)\\
 &\leq C_X(1+B)k_j^n.
\end{align*}
The last line uses $k_j/k_{j-1}=2$ and \eqref{eq-gu-reference-expansion}.
Set $\mathscr{V}_j=\bigcup_{v\in\mathscr V_{j-1}}\mathscr W_{j,v}$.
The sets $U_{j,v}$ cover the input class, so $\mathscr V_j$ is a cover at the new radius. Moreover
\begin{equation*}
 M_j\leq\sum_{v\in\mathscr{V}_{j-1}}\#\mathscr{W}_{j,v}
 \leq M_{j-1}\exp[C_X(1+B)k_j^n].
\end{equation*}
The centers remain inside $\cK_B$. Errors are measured directly at each level and are not summed along the sequence of centers.

\emph{Cardinality and final scale.}
Iterating the preceding inequality gives
\begin{equation}
 \log M_j\leq C_X(1+B)\sum_{i=0}^jk_i^n
 \leq\frac{C_X(1+B)k_j^n}{1-2^{-n}}.
 \label{eq-dyadic-count}
\end{equation}
Choose the least $j\geq0$ such that $k_j\geq2a(1+B)/\e$. Minimality, or the fixed initial level when $j=0$, gives $k_j\leq C_X(1+B)/\e$. The closed covering radius is at most $\e/2$, so this is an open $\e$-cover. Substitution into \eqref{eq-dyadic-count} proves
\begin{equation*}
 H_\e(\cK_B,d_1)
 \leq \log M_j
 \leq C_X(1+B)k_j^n
 \leq C_X(1+B)^{n+1}\e^{-n}.
\end{equation*}
Run the same construction with the input class $\cK_B^{\cH}$ to obtain smooth internal centers: the local proposition allows any subset and chooses representatives in that subset. No entropy bound on the Gaussian decoder outputs is needed. Proposition~\ref{prop-d1-lower} supplies the matching lower bound.
\end{proof}

\section{Toric reduction to convex and transport coding}
\label{sec-toric}

This section proves Theorem~\ref{thm-toric-main}. We use the symplectic-potential description developed by Abreu \cite[Section~2, Theorem~2.8]{Abr00} and Donaldson \cite[Section~3.1]{Don02}. 
The characterization of finite energy by integrability of the 
Legendre transform already appears in the real convex setting
of Berman--Berndtsson \cite[Proposition~2.9]{BB13}. 
For the systematic toric pluripotential formulation, we use
Coman--Guedj--Sahin--Zeriahi
\cite[Proposition~3.2, Theorem~3.6, and Proposition~3.9]{CGSZ19}. 
The metric identity and measure transport below are consequences of this established correspondence. We use them to express the entropy constraint as a quantitative convex-function covering problem.

\subsection{Toric potentials and the exact metric identity}

Let $(X,\omega_0)$ be a polarized toric K\"ahler manifold of complex dimension $n$, and let $P\subset\R^n$ be its Delzant moment polytope.  Put
\begin{equation*}
V:=\int_X\omega_0^n,
\qquad \lambda_P:=\frac{1}{\vol(P)}\mathds{1}_P\,dy.
\end{equation*}
Thus $\lambda_P$ is normalized Lebesgue measure on $P$.  On the open orbit, write $x\in\R^n$ for logarithmic coordinates, let $F_0(x)$ be the strictly convex potential of $\omega_0$, and let
\begin{equation*}
u_0(y):=F_0^*(y)=\sup_{x\in\R^n}\{\langle x,y\rangle-F_0(x)\} 
\end{equation*}
be its Legendre transform.  The reference moment map is $m_0=\nabla F_0:\R^n\to P^\circ$.

For a torus-invariant finite-energy potential $\varphi$, let $F(x)=F_0(x)+\varphi(x)$ on the open orbit. Its Legendre transform and inverse are
\begin{equation*}
 u_\varphi(y)=\sup_{x\in\R^n}\{\langle x,y\rangle-F(x)\},\qquad
 F(x)=\sup_{y\in P}\{\langle x,y\rangle-u_\varphi(y)\}.
\end{equation*}
We take the lower semicontinuous extension to $P$ from $P^\circ$ and set it to $+\infty$ off $P$. The cited finite-energy characterization says that $u_\varphi\in L^1(P)$; conversely such convex functions give toric $\cE^1$ potentials. Additive constants are fixed by
\begin{equation}
\int_Pu_\varphi\,d\lambda_P=0.
\label{eq-toric-normalization}
\end{equation}
Explicitly, in polytope notation,
\begin{equation*}
 \cU_0^1(P)=\left\{u:P\to\R\cup\{+\infty\}:
 \begin{array}{l}
 u\text{ is convex, finite on }P^\circ,\quad u\in L^1(P),\\
 u\vert_{\partial P}\text{ is the lower semicontinuous extension},\\
 \int_Pu\,dy=0
 \end{array}\right\}.
\end{equation*}

At every differentiability point of $u\in\cU_0^1(P)$ define
\begin{equation}
T_u:=\nabla u_0^*\circ\nabla u
=\nabla F_0\circ\nabla u:P^\circ\longrightarrow P^\circ,
\label{eq-toric-transport-map}
\end{equation}
and put $\sigma_u:=(T_u)_*\lambda_P$.

\begin{prop}
\label{prop-toric-measure-identities}
For a torus-invariant $\varphi\in\cE^1(X,\omega_0)$ and $u=u_\varphi$,
\begin{equation}
(m_0)_*\MA(\varphi)=\sigma_u,
\qquad
\Ent(\MA(\varphi)\mid(V^{-1}\omega_0^n))
=\Ent(\sigma_u\mid\lambda_P).
\label{eq-toric-entropy-identity}
\end{equation}
The entropy identity holds with values in $[0,+\infty]$. 
\end{prop}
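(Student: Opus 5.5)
The plan is to prove both identities by pushing everything through the reference moment map $m_0$, which carries the torus-invariant measure $V^{-1}\omega_0^n$ to $\lambda_P$. Two facts drive the argument. The first is the classical smooth identity $(\nabla F)_*\MA_{\R^n}(F)=$ normalized Lebesgue measure on $P$, where $F=F_0+\varphi$. The second is that for finite-energy toric potentials the non-pluripolar measure charges neither the toric divisor nor any pluripolar set, so it is determined by its restriction to the open orbit.

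\textbf{Step 1: the pushforward identity.} On the open orbit $(\mathbb C^*)^n$, torus invariance identifies $\MA(\varphi)$, after averaging over the fibers $(S^1)^n$, with the real Monge--Amp\`ere measure of the convex function $F$. Since $\MA(\varphi)$ has full mass and puts no mass on the pluripolar divisor $X\setminus(\mathbb C^*)^n$, it suffices to work on $\R^n$. For convex $F$ with $\overline{\nabla F(\R^n)}=P$, the Alexandrov measure $\MA_{\R}(F)$ is $(\nabla u)_*\lambda_P$, normalized so that the total mass is $1$. This holds because $\nabla u$ and the subgradient map of $F$ are inverse multivalued maps, and the set where either is multivalued has Lebesgue measure zero. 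Composing with $m_0=\nabla F_0$ gives
\[
(m_0)_*\MA(\varphi)=(\nabla F_0\circ\nabla u)_*\lambda_P=\sigma_u .
\]
For smooth strictly positive $\varphi$ this is just the change of variables $y=\nabla F(x)$ together with $\det\nabla^2F\,dx=dy$. For general $\varphi\in\cE^1$, I would invoke the identification in \cite[Proposition~3.2, Theorem~3.6]{CGSZ19} of the non-pluripolar toric measure with the real Monge--Amp\`ere measure. Alternatively, one can approximate by decreasing smooth invariant potentials: these give uniform convergence of Legendre transforms on compact subsets of $P^\circ$, hence a.e.\ convergence of gradients and weak convergence of $\sigma_u$, and convergence of Monge--Amp\`ere measures follows from Theorem~\ref{thm-darvas-inputs}.

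\textbf{Step 2: the entropy identity.} Both $\MA(\varphi)$ and $\nu=V^{-1}\omega_0^n$ are $T$-invariant on the open orbit, which carries full mass for both measures. The map $m_0$ restricted to $(\mathbb C^*)^n$ is a principal $(S^1)^n$-bundle over $P^\circ$, and $(m_0)_*\nu=\lambda_P$ (the reference case $u=u_0$ of Step 1). Relative entropy can only decrease under pushforward, so
\[
\Ent(\sigma_u\mid\lambda_P)\leq\Ent(\MA(\varphi)\mid\nu),
\]
for instance via the variational formula \eqref{eq-entropy-variational} applied to pulled-back test functions $g\circ m_0$.

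For the reverse inequality, observe the following. If $\sigma_u\ll\lambda_P$ with density $f$, then the invariant measure $(f\circ m_0)\nu$ is a $T$-invariant measure on the open orbit whose pushforward is $\sigma_u$. Invariant measures on the bundle are determined by their pushforward, via fiber averaging and the transitive action on fibers. Hence $\MA(\varphi)=(f\circ m_0)\nu$, with no mass on the divisor, and therefore $\Ent(\MA(\varphi)\mid\nu)=\int (f\log f)\circ m_0\,d\nu=\Ent(\sigma_u\mid\lambda_P)$.

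In the infinite case, if $\sigma_u\not\ll\lambda_P$ then a $\lambda_P$-null set $A$ with $\sigma_u(A)>0$ pulls back to a $\nu$-null set charged by $\MA(\varphi)$, so both sides are $+\infty$. This also covers $[0,+\infty]$ values.

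\textbf{Main obstacle.} The delicate point is the finite-energy regime in Step 1. There one must justify that the non-pluripolar $\MA(\varphi)$ equals the fiber-lift of the real Alexandrov measure, with no hidden mass on the boundary divisor. This includes the case where $u$ is unbounded near $\partial P$ and $\nabla u$ is defined only almost everywhere. I would handle it first by the full-mass property: the Alexandrov measure of $F$ already has mass $1$, since $\nabla u$ is defined $\lambda_P$-a.e.\ on $P^\circ$, so nothing is left for the divisor. If a direct citation is insufficient, I would fall back on the decreasing smooth approximation described in Step 1.
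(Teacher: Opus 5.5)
Your proposal is correct and follows essentially the same route as the paper. Both prove the pushforward identity by Legendre duality: first in the smooth case, then via the \cite{CGSZ19} change-of-variables identity for finite energy, using that the toric divisor carries no non-pluripolar mass. Both then obtain the entropy identity from the invariant disintegration, in which torus-invariant measures on the open orbit have Haar fibers, so that $\MA(\varphi)=(f\circ m_0)\nu$. The data-processing inequality in your Step~2 is harmless but redundant, since the disintegration argument already gives equality in both the finite and infinite cases.
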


\begin{proof}
First suppose that $\varphi$ is smooth and strictly K\"ahler.  Let $F:=F_0+\varphi$ and let $m_\varphi=\nabla F$.  The action--angle expression for the toric volume form gives
\begin{equation*}
(m_\varphi)_*\MA(\varphi)=\lambda_P,
\qquad (m_0)_*(V^{-1}\omega_0^n)=\lambda_P;
\end{equation*}
see \cite[Section~2]{Abr00}.  If $y=m_\varphi(x)=\nabla F(x)$, Legendre duality gives $\nabla u(y)=x$.  Therefore
\begin{equation*}
T_u(y)=\nabla F_0(\nabla u(y))=\nabla F_0(x)=m_0(x).
\end{equation*}
Equivalently, $m_0=T_u\circ m_\varphi$ on the open orbit.  Pushing forward $\MA(\varphi)$ proves the first equality in \eqref{eq-toric-entropy-identity}.

The map $m_0$ identifies the open orbit modulo the compact torus with $P^\circ$, and both reference measures give zero mass to the complementary lower-dimensional orbits and faces.  More explicitly, the two torus-invariant measures disintegrate over $P^\circ$ with the same normalized Haar probability on every compact-torus orbit.  Their Radon--Nikodym derivative is therefore the pullback of the derivative of their $m_0$-pushforwards.  Integrating its logarithm proves the second equality in \eqref{eq-toric-entropy-identity}, including the value $+\infty$.

For a general toric finite-energy potential, \cite[Corollary~3.7]{CGSZ19} already supplies the change-of-variables identity in this regularity. After division by total volume it reads
\begin{equation*}
 \int_X\chi(x)\,d\MA(\varphi)=\int_P\chi(\nabla u(y))\,d\lambda_P(y)
\end{equation*}
for nonnegative continuous functions $\chi$ of the logarithmic coordinate; the complement of the open orbit has zero non-pluripolar mass. Taking $\chi=h\circ\nabla F_0$ proves the first pushforward identity for every nonnegative continuous $h$ on $P$. The invariant disintegration proves the entropy equality directly, so no convergence of entropy values is required.
\end{proof}

Toric geodesics are affine in Legendre coordinates
\cite[Section~4, Theorem~3]{Guan99}.
Combined with Darvas's length-minimizing theorem
\cite[Theorem~3.5]{Dar15}, this gives the $d_1$--$L^1$ identity below.
The calculation is the $p=1$ counterpart of Guedj's $L^2$ distance
formula \cite[Proposition~4.3]{Gue14}: one integrates the absolute
velocity instead of its square.
The finite-energy extension uses
\cite[Proposition~3.9]{CGSZ19} and metric completion.


\begin{prop}
\label{prop-toric-d1-isometry}
If $u,v\in\cU_0^1(P)$ correspond to toric finite-energy potentials $\varphi_u,\varphi_v$, then
\begin{equation}
d_1(\varphi_u,\varphi_v)=\int_P\abs{u-v}\,d\lambda_P.
\label{eq-toric-d1-identity}
\end{equation}
\end{prop}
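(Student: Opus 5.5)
First settle the smooth case, then pass to finite energy by approximation. Let $u,v$ be smooth symplectic potentials: strictly convex on $P^\circ$ with Guillemin boundary behaviour, corresponding to smooth strictly $\omega_0$-psh toric potentials $\varphi_u,\varphi_v$. By Guan's theorem \cite[Section~4, Theorem~3]{Guan99}, the Mabuchi geodesic joining them is the affine segment $u_t=(1-t)u+tv$ in Legendre coordinates. Each $u_t$ is again a smooth symplectic potential, with the same Guillemin singular part.

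The velocity is then computed by Legendre duality. With $F_t=u_t^*$, one has $\dot F_t(x)=-\dot u_t(\nabla F_t(x))=-(v-u)(\nabla F_t(x))$. Since $\varphi_t=F_t-F_0$, this gives $\dot\varphi_t=-(v-u)\circ m_{\varphi_t}$. The pushforward identity $(m_{\varphi_t})_*\MA(\varphi_t)=\lambda_P$, used in the proof of Proposition~\ref{prop-toric-measure-identities}, then yields
\begin{equation*}
 \int_X\abs{\dot\varphi_t}\,\MA(\varphi_t)=\int_P\abs{u-v}\,d\lambda_P
\end{equation*}
for every $t$. Darvas's theorem \cite[Theorem~3.5]{Dar15} says that the $d_1$ distance equals the $\norm{\cdot}_{1}$-length of the (weak, here smooth) geodesic. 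Hence \eqref{eq-toric-d1-identity} holds for smooth potentials. Additive constants are harmless on both sides: $d_1(\varphi,\varphi+c)=\abs c$ by \eqref{eq-d1-constants-e}, and the geodesic formula already carries the constants. The normalization \eqref{eq-toric-normalization} only fixes representatives.

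For general $u,v\in\cU_0^1(P)$, choose smooth symplectic potentials $u_j\to u$ and $v_j\to v$ in $L^1(P)$. For instance, take $u_j=u_{0}+\text{(mollified, slightly convexified) }(u-u_0)$, or use the approximation in \cite[Proposition~3.9]{CGSZ19}, which identifies convergence in $\cE^1$ of toric potentials with $L^1(P)$ convergence of Legendre transforms. The right-hand side of \eqref{eq-toric-d1-identity} passes to the limit trivially. For the left-hand side I need $d_1(\varphi_{u_j},\varphi_u)\to0$, and this is the main obstacle. My plan is to apply the already proven smooth identity to the pair $(u_j,u_i)$. This shows that $(\varphi_{u_j})$ is $d_1$-Cauchy, so it converges in the complete space $\cE^1$ to some $\psi$. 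Since $d_1$-convergence implies $L^1(X)$-convergence (Theorem~\ref{thm-darvas-inputs}), and $L^1(P)$-convergence of the Legendre transforms gives pointwise convergence of $F_j$ on the open orbit, we get $\psi=\varphi_u$. The triangle inequality then transfers the identity.

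The delicate point is the approximation itself. I need smooth symplectic potentials with Guillemin boundary behaviour converging in $L^1(P)$ to an arbitrary finite-energy convex $u$, possibly unbounded near $\partial P$. First, truncate $u$ by $\max$ with large affine functions, or take $\sup$ over finitely many affine minorants; this gives bounded convex approximants converging monotonically in $L^1$. Then add $\e u_0$ and regularize the bounded convex part by the standard convex smoothing. The Guillemin part of $u_0$ supplies the correct boundary singularity, and monotone or dominated convergence handles the $L^1$ limit.
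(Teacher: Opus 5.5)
Your smooth case is the paper's argument: Guan's affine segment, $\dot F_t=-\dot u_t\circ\nabla F_t$, the pushforward $(\nabla F_t)_*\MA(\varphi_t)=\lambda_P$, and Darvas's Theorem~3.5. Your Cauchy--completeness--identification scheme would then carry the identity to $\cE^1$, provided you had suitable approximants. The gap is that you do not construct them.

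\emph{The boundary singularity is wrong.} The smooth identity, and Darvas's length formula behind it, need endpoints in $\cH$. By Abreu's criterion this means $u_j-u_0\in C^\infty(P)$, so the facet singularity must be exactly $\frac12\sum_r\ell_r\log\ell_r$. Your recipe, a smoothed bounded convex part plus $\e u_0$, has singular part $\frac{\e}{2}\sum_r\ell_r\log\ell_r$. Then $u_j-u_0=g_\delta-(1-\e)u_0$ is not smooth up to $\partial P$. The corresponding bounded toric potential degenerates along the toric divisors, so neither Guan's theorem nor your smooth identity applies to it.

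\emph{The obvious repairs do not work.} Adding $u_0$ itself restores the correct singularity, but then $u_j\to u+u_0$ rather than $u$. To keep the Guillemin term and still approximate $u$ in $L^1(P)$, you would have to subtract a smooth approximation of $u_0$ whose Hessian is dominated by $D^2u_0+D^2g_\delta$. That is a separate convexity construction you have not supplied. Your fallback citation of \cite[Proposition~3.9]{CGSZ19} does not close the gap either: the paper invokes it only for integrability of the limit transforms, not as an approximation theorem. A minor point: taking $\max$ with affine functions does not bound $u$ from above; finite suprema of affine minorants do.

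\emph{How the paper avoids this.} It does no smoothing on the polytope side. It takes the B\l ocki--Ko\l odziej decreasing smooth approximants of $\varphi_u,\varphi_v$ and averages them over the compact torus, which keeps them decreasing, strictly positive and toric. These converge in $d_1$ by \cite[Theorem~4.17]{Dar15}. Their Legendre transforms increase to $u,v$ by \cite[Lemma~2.1(iii)]{CGSZ19}, so monotone convergence gives $L^1(P)$ convergence. Both sides of the identity converge directly, with no completeness argument or identification of a limit.

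\emph{The identification step also needs justification.} If you keep your route, $L^1(P)$ convergence controls $u_j$ only locally uniformly on $P^\circ$, but the supremum defining $F_j(x)$ may be attained near $\partial P$. The bound $\limsup_jF_j(x)\le F(x)$ requires $\liminf_ju_j(y_j)\ge u(y)$ whenever $y_j\to y\in\partial P$. This follows from convexity along segments toward an interior point together with the lower semicontinuous extension, but you should say so.
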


\begin{proof}
For smooth endpoints, the weak geodesic is toric and its Legendre transform is the affine segment
\begin{equation*}
u_t=(1-t)u+tv
\end{equation*}
by \cite[Section~4, Theorem~3]{Guan99}.
If $F_t=u_t^*$, differentiating
\begin{equation*}
u_t(y)+F_t(x)=\langle x,y\rangle,
\qquad y=\nabla F_t(x),
\end{equation*}
gives $\dot F_t(x)=-\dot u_t(y)$.  Since $(\nabla F_t)_*\MA(\varphi_t)=\lambda_P$, the normalized Mabuchi norm of the velocity is
\begin{align*}
\norm{\dot\varphi_t}_{1,\varphi_t}
&=\int_X\abs{\dot\varphi_t}\,\MA(\varphi_t)\\
&=\int_P\abs{\dot u_t}\,d\lambda_P
=\int_P\abs{v-u}\,d\lambda_P.
\end{align*}
Darvas's minimizing constant-speed theorem \cite[Theorem~3.5]{Dar15} now gives \eqref{eq-toric-d1-identity} for smooth endpoints.

For general endpoints, take the decreasing smooth approximants of Theorem~\ref{thm-smooth-decreasing} and average over the compact torus. Averaging preserves decrease and strict positivity, and their limit is the original invariant potential. They converge in $d_1$ by \cite[Theorem~4.17]{Dar15}. Their Legendre transforms increase to $u$ and $v$ by \cite[Lemma~2.1(iii)]{CGSZ19}. Integrability of the limits follows from \cite[Proposition~3.9]{CGSZ19}; monotone convergence after subtracting the first transform proves $L^1(P)$ convergence. Subtracting their means preserves this convergence and changes the corresponding potentials by convergent constants. Passing to the limit in the smooth identity proves \eqref{eq-toric-d1-identity}.
\end{proof}

Define the convex entropy class
\begin{equation}
\cU_B^1(P;u_0):=\{u\in\cU_0^1(P):
\Ent((T_u)_*\lambda_P\mid\lambda_P)\leq B\}.
\label{eq-convex-entropy-class}
\end{equation}
For the class in \eqref{eq-convex-entropy-class}, Propositions~\ref{prop-toric-measure-identities} and~\ref{prop-toric-d1-isometry} give the exact coding identity
\begin{equation}
H_\e(\cK_B^{\rm tor},d_1)
=H_\e(\cU_B^1(P;u_0),L^1(\lambda_P)).
\label{eq-toric-convex-coding}
\end{equation}
Thus the K\"ahler covering problem has been reduced, without a loss in constants, to an $L^1$ covering problem for normalized convex functions.

\subsection{From relative entropy to a Sobolev bound}

Choose primitive integral outward normals $a_r$ and write
$P=\{y:\langle a_r,y\rangle\leq b_r,\ 1\leq r\leq d_P\}$.
Set $\ell_r(y)=b_r-\langle a_r,y\rangle$, so that each $\ell_r$
is nonnegative on $P$ and vanishes on the corresponding facet.
Guillemin's canonical boundary term, recalled in
\cite[Theorem~2.5]{Abr00}, together with Abreu's characterization
of smooth symplectic potentials \cite[Theorem~2.8]{Abr00}, gives 
\begin{equation}
u_0(y)=\frac12\sum_{r=1}^{d_P}\ell_r(y)\log\ell_r(y)+h_0(y),
\qquad h_0\in C^\infty(P).
\label{eq-guillemin-boundary}
\end{equation}

The next lemma is the polytope counterpart of the uniform exponential integrability \eqref{eq-alpha-integrability}: logarithmic facet singularities of the reference gradient replace the singularities of normalized $\omega$-psh potentials.

\begin{lem}
\label{lem-toric-exponential-moment}
There is $a_0>0$, depending only on $(P,u_0)$, such that
\begin{equation}
\int_Pe^{a_0\abs{\nabla u_0}}\,d\lambda_P<\infty.
\label{eq-reference-gradient-exponential}
\end{equation}
\end{lem}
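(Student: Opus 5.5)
The plan is to differentiate the Guillemin boundary formula \eqref{eq-guillemin-boundary} and then integrate facet by facet. Since $\nabla(\ell_r\log\ell_r)=-(1+\log\ell_r)a_r$, we get on $P^\circ$
\begin{equation*}
\nabla u_0(y)=-\frac12\sum_{r=1}^{d_P}(1+\log\ell_r(y))\,a_r+\nabla h_0(y).
\end{equation*}
Because $h_0\in C^\infty(P)$ and $P$ is compact, $\abs{\nabla h_0}\leq C_0$. Hence
\begin{equation*}
\abs{\nabla u_0(y)}\leq C_1+\frac12\sum_{r=1}^{d_P}\abs{a_r}\,\abs{\log\ell_r(y)},
\end{equation*}
where $C_1$ depends only on $(P,u_0)$. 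Set $A=\max_r\abs{a_r}$. The $\ell_r$ are bounded above on $P$ by some $D$, so $\abs{\log\ell_r}\leq \log D+\log(1/\ell_r)$ whenever the latter is positive. This gives a pointwise bound
\begin{equation*}
e^{a_0\abs{\nabla u_0}}\leq C_2\prod_{r=1}^{d_P}\ell_r^{-a_0A/2}.
\end{equation*}

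Next I will bound the product by a sum, so that each facet is handled separately. By the generalized arithmetic--geometric mean inequality (or H\"older), the product of $d_P$ factors is at most $d_P^{-1}\sum_r \ell_r^{-d_Pa_0A/2}$. It therefore suffices that $\int_P\ell_r^{-s}\,dy<\infty$ for $s=d_Pa_0A/2$.

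Each $\ell_r$ is an affine function vanishing on a facet. Slicing $P$ by the level sets $\{\ell_r=t\}$, each slice has $(n-1)$-dimensional volume bounded by a constant depending on $P$. So $\int_P\ell_r^{-s}dy\leq C\int_0^{D}t^{-s}dt$, which is finite whenever $s<1$. Choosing $a_0<2/(d_PA)$ makes every term finite, and this proves \eqref{eq-reference-gradient-exponential}.

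The only step needing care is the first one: \eqref{eq-guillemin-boundary} must be valid with $h_0$ smooth up to the boundary, including near the vertices where several $\ell_r$ vanish at once. That is exactly Abreu's statement as quoted. The product-to-sum step means simultaneous vanishing at corners causes no extra difficulty, since the singular factors are never integrated jointly.
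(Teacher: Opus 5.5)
Your proof is correct, but it takes a different route from the paper.

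The paper starts from the same pointwise bound $\abs{\nabla u_0}\leq C_0+C_1\sum_r\abs{\log\ell_r}$ and then localizes. It uses that a Delzant polytope is simple: near any boundary point the active facet functions form part of an affine coordinate system, and the inactive ones are bounded away from zero. In those coordinates $\prod_{r\ \mathrm{active}}\ell_r^{-aC_1}$ splits into one-variable singularities, which are integrable by Fubini as soon as $aC_1<1$. A finite boundary cover, together with boundedness on compact subsets of $P^\circ$, completes the argument.

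You avoid local coordinates altogether. The inequality $\prod_{r=1}^{d_P}x_r\leq d_P^{-1}\sum_r x_r^{d_P}$ decouples the facets, and you then integrate each $\ell_r^{-s}$ globally by slicing along its level sets. (The passage from $e^{s\abs{\log\ell_r}}$ to $C\ell_r^{-s}$ is slightly informal as written, but it is fine because $\ell_r\leq D$ absorbs the region $\ell_r>1$ into the constant.)

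What your version gains is generality and economy. It never uses simplicity, so it works for any convex polytope. The corners need no separate treatment, since, as you note, the singular factors are never integrated jointly. It also needs no boundary charts.

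What it costs is a weaker admissible exponent: $a_0<2/(d_PA)$ degrades with the total number of facets, whereas the paper only needs the per-factor condition $aC_1<1$. Since the lemma asserts only the existence of some $a_0>0$ depending on $(P,u_0)$, this loss does not matter here.
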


\begin{proof}
Equation \eqref{eq-guillemin-boundary} gives
\begin{equation*}
\abs{\nabla u_0(y)}\leq C_0+C_1\sum_{r=1}^{d_P}\abs{\log\ell_r(y)}.
\end{equation*}
A Delzant polytope is simple.  Near a boundary point, the active facet functions form part of an affine coordinate system, and the inactive facet functions are bounded away from zero.  Hence, on every member of a finite boundary cover,
\begin{equation*}
e^{a\abs{\nabla u_0(y)}}
\leq C\prod_{r\ \mathrm{active}}\ell_r(y)^{-aC_1}.
\end{equation*}
The right-hand side is locally integrable in facet coordinates when $aC_1<1$.  On a compact subset of $P^\circ$ it is bounded.  A sufficiently small $a_0$ therefore proves \eqref{eq-reference-gradient-exponential}, including at the corners.
\end{proof}

Combining this exponential moment with relative entropy gives a gradient bound, just as \eqref{eq-gu-entropy-control} and \eqref{eq-energy-budget} give the energy bound in the general K\"ahler setting. The transport identity supplies the change of variables in this counterpart.

\begin{lem}
\label{lem-toric-sobolev-bound}
If $u\in\cU_B^1(P;u_0)$, then $u\in W^{1,1}(P^\circ)$ and
\begin{equation}
\int_P\abs{\nabla u}\,d\lambda_P\leq C(1+B).
\label{eq-toric-gradient-bound}
\end{equation}
Set $p_*=1+1/(2n)>1$. After the normalization \eqref{eq-toric-normalization},
\begin{equation}
\norm{u}_{L^{p_*}(\lambda_P)}\leq C(1+B).
\label{eq-toric-sobolev-bound}
\end{equation}
\end{lem}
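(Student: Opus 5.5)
The plan is to bound $\int_P\abs{\nabla u}\,d\lambda_P$ by transporting the problem through $T_u$ and applying entropy duality against the exponential moment of Lemma~\ref{lem-toric-exponential-moment}. The key identity is $\nabla u = \nabla u_0\circ T_u$ almost everywhere on $P^\circ$. Indeed, $\nabla u_0=(\nabla F_0)^{-1}$ on $P^\circ$, so
\begin{equation*}
\nabla u_0(T_u(y))=\nabla u_0(\nabla F_0(\nabla u(y)))=\nabla u(y).
\end{equation*}
Hence, by the definition $\sigma_u=(T_u)_*\lambda_P$,
\begin{equation*}
\int_P\abs{\nabla u}\,d\lambda_P=\int_P\abs{\nabla u_0}\circ T_u\,d\lambda_P=\int_P\abs{\nabla u_0}\,d\sigma_u .
\end{equation*}
This uses only that $\nabla u$ exists almost everywhere (convexity, Alexandrov/Rademacher) and takes values in $\R^n$ on $P^\circ$, since $u$ is finite there. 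Now apply Lemma~\ref{lem-measurable-entropy} with $g=a_0\abs{\nabla u_0}$ and $\mu=\sigma_u$, $\nu=\lambda_P$. Since $\Ent(\sigma_u\mid\lambda_P)\le B$, this gives
\begin{equation*}
a_0\int\abs{\nabla u_0}\,d\sigma_u\le B+\log\int e^{a_0\abs{\nabla u_0}}d\lambda_P .
\end{equation*}
The integral on the right is finite by \eqref{eq-reference-gradient-exponential}, which proves \eqref{eq-toric-gradient-bound}.

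To conclude $u\in W^{1,1}(P^\circ)$, I must check that the pointwise a.e.\ gradient of the convex function is its distributional gradient. For a convex function that is locally Lipschitz on $P^\circ$ this is standard. Combined with $u\in L^1$ (part of $\cU_0^1$) and the $L^1$ gradient bound, this gives the Sobolev membership on the open polytope.

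For \eqref{eq-toric-sobolev-bound}, I use the Sobolev embedding $W^{1,1}(P^\circ)\hookrightarrow L^{n/(n-1)}$ on the bounded Lipschitz (convex) domain $P^\circ$, together with the Poincaré inequality $\norm{u-\bar u}_{L^{n/(n-1)}}\le C\norm{\nabla u}_{L^1}$ for mean-zero $u$, normalized by \eqref{eq-toric-normalization}. When $n\ge2$ the exponent $p_*=1+1/(2n)$ is at most $n/(n-1)$, so Hölder on the probability space $\lambda_P$ gives the bound. When $n=1$, $W^{1,1}$ embeds in $L^\infty$ on an interval, so again any finite $p_*$ works. In both cases $\norm{u}_{L^{p_*}}\le C\int\abs{\nabla u}\,d\lambda_P\le C(1+B)$.

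The main obstacle is justifying the transport identity $\nabla u=\nabla u_0\circ T_u$ almost everywhere with values in $P^\circ$, that is, that $\nabla u(y)\in\R^n$ (automatic at differentiability points in the interior). The real subtlety is that $T_u$ maps into $P^\circ$, where $\nabla u_0$ is defined, which holds because $\nabla F_0$ maps $\R^n$ onto $P^\circ$. The boundary $\partial P$ is $\lambda_P$-null, so the lower semicontinuous boundary values play no role here. A secondary check is that the Poincaré constant depends only on $P$.
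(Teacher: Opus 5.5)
Your proposal is correct and follows the paper's proof: the same transport identity $\nabla u=\nabla u_0\circ T_u$, the same entropy duality via Lemma~\ref{lem-measurable-entropy} against the exponential moment \eqref{eq-reference-gradient-exponential}, and the same locally Lipschitz argument for $W^{1,1}$ membership. The only difference is the last step. You quote the critical Sobolev--Poincar\'e inequality on a Lipschitz domain and then apply H\"older, with $n=1$ handled separately. The paper instead proves a noncritical $L^{p_*}$ Poincar\'e inequality directly from a Riesz-potential bound on the convex polytope, which avoids the critical embedding and treats all $n$ uniformly.
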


\begin{proof}
A convex function finite on $P^\circ$ is locally Lipschitz and differentiable almost everywhere.  At a differentiability point, $\nabla u_0$ and $\nabla u_0^*=\nabla F_0$ are inverse diffeomorphisms, so \eqref{eq-toric-transport-map} gives
\begin{equation}
\nabla u=\nabla u_0\circ T_u
\quad\text{almost everywhere on }P.
\label{eq-gradient-transport-identity}
\end{equation}
Put $\sigma_u=(T_u)_*\lambda_P$. The nonnegative function $a_0\abs{\nabla u_0}$ is Borel on $P^\circ$ and may be defined arbitrarily on $\partial P$, a $\lambda_P$-null and hence $\sigma_u$-null set. Lemma~\ref{lem-measurable-entropy} applies by \eqref{eq-reference-gradient-exponential}, without presupposing gradient integrability. It gives
\begin{align*}
a_0\int_P\abs{\nabla u}\,d\lambda_P
&\underalign{\eqref{eq-gradient-transport-identity}}{=}
a_0\int_P\abs{\nabla u_0}\,d\sigma_u\\
&\underalign{\eqref{eq-measurable-entropy}}{\leq}
B+\log\int_Pe^{a_0\abs{\nabla u_0}}\,d\lambda_P
\leq C(1+B).
\end{align*}
This proves \eqref{eq-toric-gradient-bound}.  Since $u\in L^1(P)$ and its almost-everywhere gradient is integrable, the local distributional identity extends across compact subsets of $P^\circ$, so $u\in W^{1,1}(P^\circ)$.  Boundary values lie on a null set.

We only need a noncritical Sobolev exponent, which permits one argument in all dimensions. For a smooth function $f$ on the convex polytope, integrate the fundamental theorem of calculus on segments from $x$ to $y$ and average in $y$. Polar integration about $x$, with radial range bounded by the diameter of $P$, gives
\begin{equation*}
 \abs{f(x)-\textstyle\int_P f\,d\lambda_P}
 \leq C_P\int_P\frac{\abs{\nabla f(z)}}{\abs{x-z}^{n-1}}dz.
\end{equation*}
Indeed, reversing the radial integrals in $\int_0^{R(\theta)}\rho^{n-1}\int_0^\rho\abs{\nabla f(x+t\theta)}dt\,d\rho$ bounds the outer integral by a fixed constant and leaves the displayed kernel. Convexity ensures that the segments stay in $P$. Its $L^{p_*}$ norm in $x$ is uniformly bounded in $z$, since $p_*(n-1)<n$. Minkowski's integral inequality therefore gives $\norm{f-\int f}_{L^{p_*}}\leq C_P\norm{\nabla f}_{L^1}$. Smooth approximation on a polytope extends this to $W^{1,1}$ functions. Applying it to $u$ with zero mean proves \eqref{eq-toric-sobolev-bound}.
\end{proof}

To apply the covering theorem for real-valued convex functions on $P$, we first approximate convex functions that may be infinite on $\partial P$.

\begin{lem}
\label{lem-toric-boundary-closure}
Let $1<r<\infty$ and let $u$ be convex on $P^\circ$ with $u\in L^r(P)$. Then, with $R=\norm{u}_{L^r(P)}$,
\begin{equation*}
 u\in\overline{\{v:P\to\R:\ v\text{ convex},\ \norm{v}_{L^r(P)}\leq R\}}^{\,L^r(P)}.
\end{equation*}
This includes functions with value $+\infty$ on part of $\partial P$.
\end{lem}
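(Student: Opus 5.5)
We approximate $u$ by finite convex functions obtained from a contraction of the polytope toward an interior point. Fix $y_0\in P^\circ$ and, for $0<t<1$, set
\begin{equation*}
 v_t(y):=u\bigl(y_0+t(y-y_0)\bigr),\qquad y\in P.
\end{equation*}
The image $P_t:=y_0+t(P-y_0)$ is a compact subset of $P^\circ$. A convex function finite on $P^\circ$ is continuous there, so $v_t$ is a real-valued, continuous, convex function on all of $P$. The possible value $+\infty$ of the lower semicontinuous extension on $\partial P$ is therefore never evaluated. The remaining tasks are to show that $v_t\to u$ in $L^r(P)$ and to enforce the norm bound $\norm{v}_{L^r}\leq R$ exactly.

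For the convergence, change variables to get
\begin{equation*}
 \int_P\abs{v_t}^rdy=t^{-n}\int_{P_t}\abs{u}^rdz\leq t^{-n}R^r .
\end{equation*}
Hence $\limsup_{t\uparrow1}\norm{v_t}_{L^r}\leq R$. The family $v_t\to u$ pointwise on $P^\circ$ by continuity of $u$ in the interior, since $y_0+t(y-y_0)\to y$. For $L^r$ convergence I plan to avoid a pointwise majorant and instead use continuity of dilations in $L^r$. Extend $u$ by zero outside $P$ to obtain $\tilde u\in L^r(\R^n)$. Then $v_t=\tilde u\circ D_t$ on $P$, with $D_t(y)=y_0+t(y-y_0)$, and $\norm{\tilde u\circ D_t-\tilde u}_{L^r(\R^n)}\to0$ as $t\to1$. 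This follows from the standard density argument: it is true for $C_c$ functions, and the maps $f\mapsto f\circ D_t$ are uniformly bounded on $L^r$ by $t^{-n/r}\leq 2^{n/r}$ for $t\geq1/2$. Restricting to $P$ gives $v_t\to u$ in $L^r(P)$. This convergence step is where I expect the only real care to be needed, because $u$ may blow up at $\partial P$. The dilation argument handles this without any boundary analysis, and $r<\infty$ is used exactly here.

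To meet the constraint $\norm{v}\leq R$, rescale. If $R=0$ then $u=0$ a.e.\ and there is nothing to prove. Otherwise put
\begin{equation*}
 w_t:=\frac{R}{\max\{R,\norm{v_t}_{L^r}\}}\,v_t .
\end{equation*}
A positive multiple of a convex function is convex, $w_t$ is real-valued on $P$, and $\norm{w_t}_{L^r}\leq R$. Since $\norm{v_t}_{L^r}\to\norm{u}_{L^r}=R$, the scalar factor tends to $1$, so $w_t\to u$ in $L^r(P)$. This proves that $u$ lies in the stated closure. The hypothesis $r>1$ plays no role in the argument; it only matches the later application with $r=p_*$ from Lemma~\ref{lem-toric-sobolev-bound}.
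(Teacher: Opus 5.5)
Your proposal is correct and follows essentially the same route as the paper: contract $P$ toward an interior point, change variables, and get $L^r$ convergence from the uniform boundedness of the dilation operators together with density of continuous functions. The only difference is in meeting the norm constraint. The paper multiplies by the Jacobian factor $(1-\delta)^{n/r}$, so the change of variables gives $\norm{u_\delta}_{L^r}\leq R$ exactly. You instead rescale afterwards by $R/\max\{R,\norm{v_t}_{L^r}\}$, which works equally well.
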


\begin{proof}
Fix $y_*\in P^\circ$, let $A_\delta y=(1-\delta)y+\delta y_*$, and define
\begin{equation*}
u_\delta(y):=(1-\delta)^{n/r}u(A_\delta y).
\end{equation*}
Since $A_\delta P\Subset P^\circ$, the function $u_\delta$ is finite, continuous, and convex on the closed polytope.  Changing variables $z=A_\delta y$ gives
\begin{align*}
\norm{u_\delta}_{L^r(dy)}^r
&=(1-\delta)^n\int_P\abs{u(A_\delta y)}^r\,dy\\
&=\int_{A_\delta P}\abs{u(z)}^r\,dz
\leq\norm{u}_{L^r(dy)}^r.
\end{align*}
The composition operators $f\mapsto f\circ A_\delta$ are uniformly bounded on $L^r$ for small $\delta$ and converge strongly to the identity, first on continuous functions and then by density. Thus $u_\delta\to u$ in $L^r$.
\end{proof}

For example, finite transport entropy alone does not make $u$ finite on the closed polytope. On the standard two-simplex with its standard reference symplectic potential $u_0=\frac12[y_1\log y_1+y_2\log y_2+(1-s)\log(1-s)]$, put $s=y_1+y_2$ and
\begin{equation*}
u=u_0+s^{-\alpha}-c,
\qquad 0<\alpha<1,
\end{equation*}
where $c$ gives zero mean. Then $u\in L^2(P)$ and $\nabla u\in L^1(P)$, but $u(0,0)=+\infty$. To check finite transport entropy, put $q(s)=\exp(-2\alpha s^{-\alpha-1})$ and $D(s)=1-s+sq(s)$. 
Solving $x_i=\frac12\log(y_i/(1-y_1-y_2))$ for $y$ gives
\begin{equation*}
 (\nabla u_0)^{-1}(x)
 =\frac{(e^{2x_1},e^{2x_2})}{1+e^{2x_1}+e^{2x_2}}.
\end{equation*}
Substituting
$\nabla u=\nabla u_0-\alpha s^{-\alpha-1}(1,1)$
into this formula, and differentiating the resulting map, yields 
\begin{equation*}
 T_u(y)=\frac{q(s)}{D(s)}y,\qquad
 \det DT_u=\frac{q(s)^2}{D(s)^3}
 \left(1+2\alpha(\alpha+1)(1-s)s^{-\alpha-1}\right).
\end{equation*}
The determinant follows by differentiating the radial map: its angular factor is $q/D$ and its radial factor is $(sq/D)'$. The radial map is increasing from zero to one, hence $T_u$ is an interior diffeomorphism. Near $s=0$, its negative log determinant is $4\alpha s^{-\alpha-1}+O(1+\abs{\log s})$, and it is bounded near the other faces. The change-of-variables formula gives $\Ent((T_u)_*\lambda_P\mid\lambda_P)=-\int_P\log\det DT_u\,d\lambda_P$. Area measure is a constant times $s\,ds\,dt$ in coordinates $y_1=st,y_2=s(1-t)$, and $\alpha<1$ makes the integral finite. This illustrates the need for Lemma~\ref{lem-toric-boundary-closure} before applying a theorem about real-valued convex functions on the closed polytope.

\begin{proof}[Proof of the upper bound in Theorem~\ref{thm-toric-main}]
Put $r=p_*=1+1/(2n)$. Lemma~\ref{lem-toric-sobolev-bound} puts $\cU_B^1(P;u_0)$ in the $L^r(\lambda_P)$ ball of radius $C(1+B)$. Lemma~\ref{lem-toric-boundary-closure} permits us to cover its closure using real-valued convex functions on $P$.

The convex-function entropy theorem \cite[Theorem~1.1(ii)]{GW17}, specialized to $p=1<r$, states that if $P$ is triangulated into a fixed number of $n$-simplices, then
\begin{equation}
H_\e(\{v\text{ convex on }P:
\norm{v}_{L^r(\lambda_P)}\leq R\},L^1(\lambda_P))
\leq C_P(R/\e)^{n/2}.
\label{eq-gao-wellner-specialization}
\end{equation}
The source \cite{GW17} uses unnormalized Lebesgue norms. For the unit $L^r(dy)$ 
ball, the logarithm of the number of $L^1(dy)$-balls of radius
$\delta>0$ needed to cover the class is bounded by
$C_P(\vol(P)^{1-1/r}/\delta)^{n/2}$. 
Our radius-$R$ ball has Lebesgue radius $R\vol(P)^{1/r}$, and our covering radius corresponds to $\e\vol(P)$. Scaling the source theorem therefore replaces its ratio by
\begin{equation*}
 \frac{\vol(P)^{1-1/r}R\vol(P)^{1/r}}
 {\e\vol(P)}=\frac R\e,
\end{equation*}
proving \eqref{eq-gao-wellner-specialization}. A finite cover of the real-valued convex class covers its closure after a small radius enlargement. Discard balls not meeting the target entropy class, choose a target member in each remaining ball, and double the radius to make centers internal. These changes cost only a geometric factor. Taking $R=C(1+B)$ and using \eqref{eq-toric-convex-coding} proves \eqref{eq-toric-upper-intro}.
\end{proof}

\subsection{The toric lower bound}

Choose a closed cube $Q\Subset P^\circ$. On a neighborhood of $Q$, write $H_0=D^2u_0$ and $A_0=H_0^{-1}$; there is $\kappa>0$ such that $H_0\geq\kappa I$. Fix a nonzero nonnegative function $\psi\in C_c^\infty((0,1)^n)$. Choose a cube of fixed side length $L>0$ inside $Q$ (here $L$ denotes this length, not the polarization). Divide it into $N=k^n$ cubes of side $h=L/k$, with lower-left corners $y_j$, and put
\begin{equation*}
\psi_{j,h}(y):=h^2\psi((y-y_j)/h).
\end{equation*}
For $\theta\in\mathscr{C}_N$ from Lemma~\ref{lem-constant-weight-code}, define
\begin{equation}
w_\theta:=a\sum_{j=1}^N\theta_j\psi_{j,h},
\qquad
u_\theta:=u_0+w_\theta-c_\theta,
\label{eq-toric-bumps}
\end{equation}
where $c_\theta$ gives zero $\lambda_P$-mean.  Every word has the same Hamming weight and every scaled bump has the same integral, so $c_\theta$ is independent of $\theta$.

At any point at most one bump is nonzero and $\norm{D^2\psi_{j,h}}_\infty=\norm{D^2\psi}_\infty$.  Hence $D^2u_\theta\geq\kappa I/2$ if $a\norm{D^2\psi}_\infty\leq\kappa/2$.  Moreover, $w_\theta$ vanishes near $\partial P$, so $u_\theta$ has the same boundary form \eqref{eq-guillemin-boundary} as $u_0$.  The converse part of \cite[Theorem~2.8]{Abr00} shows that $u_\theta$ is the symplectic potential of a smooth toric K\"ahler metric.  Consequently, $T_\theta:=T_{u_\theta}$ is a diffeomorphism of $P^\circ$ and equals the identity near $\partial P$.

\begin{lem}
\label{lem-toric-quadratic-kl}
There are $a_0,K>0$, depending only on $(P,u_0)$ and $\psi$, such that
\begin{equation}
\Ent((T_\theta)_*\lambda_P\mid\lambda_P)\leq Ka^2
\label{eq-toric-kl-bound}
\end{equation}
for $0<a\leq a_0$, uniformly in $h$, $N$, and $\theta$.
\end{lem}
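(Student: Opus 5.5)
Our plan is to compute the density of $(T_\theta)_*\lambda_P$ explicitly and then repeat the quadratic Taylor estimate used for \eqref{eq-bump-kl} in the proof of Proposition~\ref{prop-d1-lower}. Since $T_\theta$ is a diffeomorphism of $P^\circ$ and equals the identity near $\partial P$, the change-of-variables formula gives the pushforward density $\rho_\theta(T_\theta(y))=1/\det DT_\theta(y)$. Hence
\begin{equation*}
\Ent((T_\theta)_*\lambda_P\mid\lambda_P)=\int_P\rho_\theta\log\rho_\theta\,d\lambda_P=-\int_P\log\det DT_\theta\,d\lambda_P,
\end{equation*}
which is the same formula used in the boundary example. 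Off the bump supports $u_\theta-u_0$ is constant, so $T_\theta=\nabla F_0\circ\nabla u_\theta$ is the identity there, $\det DT_\theta=1$, and the integrand vanishes. It is therefore enough to work on the bumps, all of which lie in the fixed compact cube $Q$.

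On $Q$ we have $DT_\theta(y)=D^2F_0(\nabla u_\theta(y))\,D^2u_\theta(y)=A_0(T_\theta(y))\,(H_0(y)+aD^2\psi_{j,h}(y))$, using $D^2F_0=(D^2u_0)^{-1}\circ\nabla F_0$. Write $\Delta\theta:=T_\theta-\mathrm{id}=\nabla F_0(\nabla u_0+\nabla w_\theta)-\nabla F_0(\nabla u_0)$. The quantities we need are small on the bumps: $\abs{\nabla w_\theta}\le Cah$ and $\abs{D^2w_\theta}\le Ca$, with $C$ independent of $h$. Consequently $\abs{T_\theta-\mathrm{id}}\le Cah$, and $T_\theta(y)$ stays in a fixed neighborhood of $Q$ where $A_0$ is smooth. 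Expanding,
\begin{equation*}
DT_\theta=I+A_0(y)\,aD^2\psi_{j,h}+E,\qquad \abs{E}\le C(ah+a^2).
\end{equation*}
The term $ah$ comes from $A_0(T_\theta(y))-A_0(y)$, and $a^2$ from the product. We would apply the log-determinant expansion $-\log\det(I+M)=-\tr M+O(\abs M^2)$ for $\abs M\le 1/2$, after decreasing $a_0$.

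The main obstacle is that the first-order term is only $O(a)$ pointwise, and the $ah$ error term is also first order in $a$. We cannot bound the integral crudely; we need the linear part to integrate to exactly zero. We plan to avoid the expansion for the linear part altogether by using the exact identity $\int_P\rho_\theta\,d\lambda_P=1$, as in \eqref{eq-bump-kl}:
\begin{equation*}
\Ent=\int_PF(\rho_\theta)\,d\lambda_P,\qquad F(s)=s\log s-s+1.
\end{equation*}
Here the mean-one property does the cancellation of first-order terms automatically. The pointwise bounds above give $\rho_\theta\in[1/2,2]$ and $\abs{\rho_\theta-1}\le Ca$ on the bump supports, uniformly in $h$ and $\theta$, and $\rho_\theta=1$ elsewhere. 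The Taylor bound $F(s)\le C(s-1)^2$ then yields $\Ent\le Ca^2\lambda_P(\text{supports})\le Ka^2$. The only real check is the uniform estimate $\abs{\det DT_\theta-1}\le Ca$, which follows from the expansion because $D^2\psi_{j,h}$ is bounded independently of $h$.
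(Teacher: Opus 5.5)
Your argument is correct, but it removes the first-order term by a different mechanism from the paper's.

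The paper Taylor-expands the log-Jacobian itself, $\mathscr{L}(y,\nabla w_\theta,D^2w_\theta)=-\log\det DT_\theta$, to second order. It identifies the linear term exactly as $-\operatorname{div}(A_0\nabla w_\theta)$ by differentiating $T_s=\nabla F_0(\nabla u_0+s\nabla w)$, including the variation of $D^2F_0$. It kills that term with the divergence theorem, because $A_0\nabla w_\theta$ is compactly supported in $Q$. It then integrates the remainder $O(\abs{\nabla w_\theta}^2+\abs{D^2w_\theta}^2)$ to get $CNa^2(h^{n+2}+h^n)$.

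You instead write the entropy as $\int_PF(\rho_\theta)\,d\lambda_P$ with $F(s)=s\log s-s+1$. Conservation of mass, $\int_P\rho_\theta\,d\lambda_P=1$, then does the cancellation automatically. All you need is the zeroth-order pointwise bound $\abs{\det DT_\theta-1}\leq Ca$ on the bumps. This follows from $\abs{\nabla w_\theta}\leq Cah$, $\abs{D^2w_\theta}\leq Ca$, and smoothness of $\nabla F_0$ and $A_0$ near $Q$. Your claim that $\rho_\theta=1$ off the supports deserves one sentence: $T_\theta$ is a bijection of $P^\circ$ equal to the identity off the supports, so it maps their union onto itself.

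This is exactly the device the paper uses for \eqref{eq-bump-kl}, and it is shorter: there is no first-variation computation and no integration by parts. The paper's route buys an explicit local expansion of the log-Jacobian, which exhibits the divergence structure behind the cancellation. In terms of the final bound you lose nothing. If you keep your pointwise estimate in the form $\abs{\rho_\theta\circ T_\theta-1}\leq C(\abs{\nabla w_\theta}+\abs{D^2w_\theta})$, you obtain the same integrated bound $C\int_P(\abs{\nabla w_\theta}^2+\abs{D^2w_\theta}^2)\,d\lambda_P$.
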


\begin{proof}
For a compactly supported perturbation $w$, put $q=\nabla w$ and $R=D^2w$.  We shall apply the calculation to the perturbations in \eqref{eq-toric-bumps}.  On the fixed compact set $Q$, the function
\begin{equation*}
\mathscr{L}(y,q,R):=-\log\det\left[
D^2F_0(\nabla u_0(y)+q)(H_0(y)+R)\right]
\end{equation*}
is uniformly $C^2$ near $(q,R)=(0,0)$.  To compute its full first variation, including the variation of $D^2F_0$, set
\begin{equation*}
T_s(y):=\nabla F_0(\nabla u_0(y)+s\nabla w(y)).
\end{equation*}
Since $\nabla F_0\circ\nabla u_0$ is the identity,
\begin{equation*}
\frac{d}{ds}\bigg\vert_{s=0}T_s=A_0\nabla w.
\end{equation*}
The derivative of the logarithmic Jacobian is the divergence of this vector field.  Taylor's formula therefore gives the pointwise expansion
\begin{equation}
\mathscr{L}(y,\nabla w,D^2w)
=-\operatorname{div}(A_0\nabla w)
+O(\abs{\nabla w}^2+\abs{D^2w}^2),
\label{eq-toric-jacobian-expansion}
\end{equation}
with a constant uniform on $Q$.

The density of $(T_\theta)_*\lambda_P$ at $T_\theta(y)$ is $(\det DT_\theta(y))^{-1}$.  The change-of-variables formula gives
\begin{equation}
\Ent((T_\theta)_*\lambda_P\mid\lambda_P)
=-\int_P\log\det DT_\theta\,d\lambda_P.
\label{eq-toric-entropy-jacobian}
\end{equation}
The integral of the linear term in \eqref{eq-toric-jacobian-expansion} vanishes because $A_0\nabla w_\theta$ is compactly supported in $Q$.  For one scaled bump,
\begin{equation*}
\int_P\abs{\nabla(a\psi_{j,h})}^2\,dy\leq Ca^2h^{n+2},
\qquad
\int_P\abs{D^2(a\psi_{j,h})}^2\,dy\leq Ca^2h^n.
\end{equation*}
The supports are disjoint and $Nh^n=L^n$.  Integrating \eqref{eq-toric-jacobian-expansion} and using \eqref{eq-toric-entropy-jacobian} therefore gives
\begin{equation*}
\Ent((T_\theta)_*\lambda_P\mid\lambda_P)
\leq CNa^2(h^{n+2}+h^n)\leq Ka^2,
\end{equation*}
which proves \eqref{eq-toric-kl-bound}.
\end{proof}

\begin{proof}[Proof of the lower bound in Theorem~\ref{thm-toric-main}]
Choose a fixed $\eta_0>0$ so small that $\eta_0\leq a_0$ and $K\eta_0^2\leq1$, and put
\begin{equation*}
a:=\eta_0\min\{1,\sqrt B\}.
\end{equation*}
Lemma~\ref{lem-toric-quadratic-kl} gives $\Ent((T_\theta)_*\lambda_P\mid\lambda_P)\leq B$.  For distinct constant-weight codewords, disjoint support and $\psi\geq0$ give the exact formula
\begin{align}
\norm{u_\theta-u_{\theta'}}_{L^1(\lambda_P)}
&=\frac{a}{\vol(P)}d_H(\theta,\theta')h^{n+2}
\int_{(0,1)^n}\psi\,dy\notag\\
&\geq cah^2,
\label{eq-toric-bump-separation}
\end{align}
where the common normalization constant cancels and $d_H(\theta,\theta')\geq\gamma N$.  Given $0<\e\leq c_0a$, choose $k$ comparable to $\sqrt{a/\e}$ so that the right-hand side of \eqref{eq-toric-bump-separation} is greater than $2\e$.  Then $N=k^n\geq c(a/\e)^{n/2}$, and Lemma~\ref{lem-constant-weight-code} gives a $2\e$-packing with
\begin{equation*}
\log\#\mathscr{C}_N\geq cN
\geq c\left(\frac{\min\{1,\sqrt B\}}{\e}\right)^{n/2}.
\end{equation*}
The coding identity \eqref{eq-toric-convex-coding} now proves \eqref{eq-toric-lower-intro}.
\end{proof}

\section{Applications and Outlook}
\label{sec-interpretation}
\subsection{Worst-case code length}
A finite $\e$-cover is a codebook: encode an input by the index of a center within distance $\e$ and decode by returning that center. A fixed-length binary index needs $\lceil\log_2N_\e\rceil$ bits. Conversely, any fixed-length code with at most $2^b$ decoded centers and worst-case distortion less than $\e$ gives an external cover of that size. Recentring in the input set changes the radius by at most two. 
Hence Theorems~\ref{thm-l1-main}--\ref{thm-toric-main} determine,
up to constants and radius rescaling, the worst-case description
exponents: $n$ for all normalized potentials in the background
$L^1$ metric and for fixed positive relative-entropy sublevels
in $d_1$, and $n/2$ for the corresponding toric sublevels. 
In this sense, our results 
connect coding theory with K\"ahler geometry. 

Our research starts from seeking a quantitative relation between relative entropy and metric entropy. 
Relative entropy $B$ and metric entropy $H_\e$ actually play different roles. The first constrains each Monge--Amp\`ere probability relative to the fixed reference volume. The second counts distinguishable potentials. In the general $d_1$ argument, \eqref{eq-energy-budget} bounds energy by $C_X(1+B)$, while \eqref{eq-common-cost} controls the extra information required relative to a coarse metric center. At adjacent dyadic levels $k\delta=O(1+B)$, so each refinement has logarithmic cost $O((1+B)k^n)$. Summation in \eqref{eq-dyadic-count} gives $O((1+B)^{n+1}\e^{-n})$. 

The contribution of the integer observations to the relative entropy 
is bounded by \eqref{eq-mark-cost} and included in
\eqref{eq-product-kl}. The sampling points are continuous random 
variables, and their probability laws enter the proof through
Lemma~\ref{lem-relative-entropy-cover}, which bounds the cardinality
of separated sets of potentials. A maximal separated set then
provides a finite codebook. This proves the existence of a codebook
with the stated cardinality bound, but gives no algorithm for
constructing it or selecting an approximant for a given potential. 
The resulting bit count refers to the index of an element in a 
codebook shared in advance; it does not include the cost of 
constructing or storing the codebook. 

Such codebooks also have a statistical interpretation.
As discussed in \cite[Introduction]{GW17}, entropy estimates
for convex functions provide complexity bounds for
shape-constrained estimation.
In the toric setting, Theorem~\ref{thm-toric-main} supplies
finite candidate sets for convex-function estimation under
a relative-entropy constraint on the induced transport.

\subsection{Entropy budgets and numerical K\"ahler geometry} 
One possible application of 
Theorem \ref{thm-d1-main} is numerical 
analysis for constant scalar curvature K\"ahler metrics. 
The formula in \cite{Chen00} expresses Mabuchi's K-energy as
relative entropy plus energy terms.
Under an appropriate coercivity hypothesis, control of these
energy terms yields an entropy bound along a normalized
minimizing sequence. 
The compactness in Theorem~\ref{thm-bbegz-entropy}, together
with lower semicontinuity, then gives a minimizer in $\cE^1$.
Further, in the K\"ahler--Einstein setting, the variational approach of
\cite{BBJ21} relates coercivity to a notion of algebraic stability. 
Note  that even a smooth cscK metric need not have small relative entropy with respect to an arbitrarily chosen smooth reference measure.
To apply the result to an iterative method, one must separately establish a uniform entropy budget for the reconstructed potentials. 
Constructing iterative schemes with this uniform entropy control,
together with an implementable codebook construction,
is left for future work.

As a more direct consequence, consider the Monge--Amp\`ere equation
\begin{equation*}
 \MA(u)=\mu,\qquad \Ent(\mu\mid\nu)\leq B,
\end{equation*}
where $\mu$ is a probability measure on $X$.
Since finite entropy implies finite energy,
\cite[Theorem~4.7]{BBGZ13} guarantees a unique solution
$u\in\cE^1(X,\omega)$ normalized by $\sup_Xu=0$. 
If the K\"ahler class is polarized, the upper bound in
Theorem~\ref{thm-d1-main} provides, for each prescribed accuracy
in the $d_1$ metric, a common finite dictionary approximating
these solutions uniformly over all such $\mu$, with an explicit
bound on the dictionary size.

\subsection{Finite-bit approximation of plurisubharmonic potentials}
Our main theorems have the following direct approximation consequence. Smooth regularization is classical (Theorem~\ref{thm-smooth-decreasing}). 
On a polarized manifold, Fubini--Study approximation goes back
to Tian \cite{Tia90}; the full smooth Bergman expansion used below
is given by Zelditch \cite[Theorem~1]{Zel98}. 
Combining the entropy bounds with smooth regularization and Fubini--Study approximation gives the following finite-dictionary estimates.

\begin{cor}
\label{cor-finite-bit-psh}
For every sufficiently large integer $b$ there is a finite set $D_b\subset\cH_\omega$, independent of the input potential, such that
\begin{equation}
 \#D_b\leq2^b,\qquad
 \sup_{u\in\PSH_0(X,\omega)}\min_{v\in D_b}\norm{u-v}_{L^1(\nu)}
 \leq C_Xb^{-1/n}.
 \label{eq-finite-bit-psh}
\end{equation}
This order is optimal among all dictionaries of at most $2^b$ elements, even allowing arbitrary $L^1(\nu)$ outputs. On a polarized manifold the elements of $D_b$ can all be chosen to be normalized Fubini--Study potentials of one common degree, depending on $b$.

Assume polarization for the $d_1$ assertion. For every $B\geq0$ and integer $b\geq C_X(1+B)$ there is a finite dictionary of smooth normalized K\"ahler potentials with
\begin{equation}
 \#D_{B,b}\leq2^b,\qquad
 \sup_{u\in\cK_B}\min_{v\in D_{B,b}}d_1(u,v)
 \leq C_X(1+B)^{(n+1)/n}b^{-1/n}.
 \label{eq-finite-bit-d1}
\end{equation}
This dictionary also admits a common-degree normalized Fubini--Study realization. Its centers are not required to remain in $\cK_B$.
\end{cor}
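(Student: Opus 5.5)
The plan is to translate each covering bound into a dictionary by choosing $\e$ so that the covering number fits in $2^b$, and then to perturb the centers into the required classes.

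\emph{The $L^1$ statement.} Theorem~\ref{thm-l1-main} applies to the closure $\PSH_0(X,\omega)$ with internal centers; an external cover by $\cH_\omega$ can be obtained by centering at $\cK$-interior points. Concretely:
\begin{enumerate}
\item Choose $\e=(C_X/(b\log 2))^{1/n}$. Then $H_\e\leq C_X\e^{-n}\leq b\log2$, and there is an $\e$-cover of $\PSH_0(X,\omega)$ with at most $2^b$ centers.
\item The centers may lie in $\PSH_0$. By \eqref{eq-l1-closure}, replace each center by an element of $\cH_\omega$ within $\e$. This doubles the radius, so $\norm{u-v}_{L^1}\leq 2\e\leq C_Xb^{-1/n}$.
\item For optimality, suppose some dictionary with $\leq 2^b$ arbitrary $L^1$ outputs achieves distortion $\delta$. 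It is an external $\delta$-cover, and recentering gives an internal $2\delta$-cover. The lower bound in \eqref{eq-l1-main} then gives $b\log2\geq c_X(2\delta)^{-n}$, so $\delta\geq c_Xb^{-1/n}$ for large $b$.
\end{enumerate}

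\emph{Fubini--Study realization.} Each of the $\leq 2^b$ smooth centers $v$ is approximated in $C^0$ by a normalized Fubini--Study potential of degree $m$, using the Tian--Zelditch expansion: $u_{H}$ with $H=H_k(v)$ built from $e^{-kv}\nu$ (or the classical Bergman kernel of $h_0^me^{-mv}$), renormalized to supremum zero. Since $v$ is smooth and strictly $\omega$-psh, the expansion gives $\norm{\beta v-v}_{C^0}=O(\log m/m)$. Because the dictionary is finite, a single $m$ suffices, with error $\leq\e$ for all centers simultaneously; this changes only the constant. In the $d_1$ case, $C^0$ closeness $\eta$ of smooth potentials gives $d_1\leq I_1\leq 2\eta$, so the same device works there.

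\emph{The $d_1$ statement.} Apply Theorem~\ref{thm-d1-main} to $\cK_B^{\cH}$ with internal smooth centers. Set
\begin{equation*}
\e=\left(\frac{C_X(1+B)^{n+1}}{b\log2}\right)^{1/n}.
\end{equation*}
The hypothesis $b\geq C_X(1+B)$, with a suitably large constant, is meant to force $\e<1/2$, which is the admissible range in \eqref{eq-d1-upper-general}; this is what makes the bound valid. Then the smooth centers form an $\e$-cover of $\cK_B^{\cH}$ with at most $2^b$ elements. By Proposition~\ref{thm-closure-main} they also cover $\cK_B$ at radius $2\e$. This gives $\sup_u\min_v d_1(u,v)\leq C_X(1+B)^{(n+1)/n}b^{-1/n}$.

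Finally, apply the common-degree Fubini--Study replacement from the previous step. This is why the statement says the centers need not remain in $\cK_B$.

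\emph{Main obstacle.} The only delicate point is checking the range condition $\e<1/2$. We need $(1+B)^{n+1}/b\lesssim 1$, but $b\geq C_X(1+B)$ only gives $(1+B)^n$ control. If that fails, I would instead use the trivial covering by the single center $0$. From $d_1(u,0)\leq C_X(1+B)$ and $b^{-1/n}\lesssim(1+B)^{-1/n}$, it follows that $C_X(1+B)\leq C_X(1+B)^{(n+1)/n}b^{-1/n}$ exactly when $b\leq(1+B)$ up to constants. So I would split the range of $b$ and interpolate by monotonicity of covering numbers in $\e$. I expect this case split to close the gap.
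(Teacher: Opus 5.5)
Your $L^1$ part (choice of $\e$, recentering into $\cH_\omega$ via \eqref{eq-l1-closure}, the optimality argument by recentering an external dictionary, and the finite-set Fubini--Study replacement using $d_1\leq\norm{\cdot}_\infty$) matches the paper's argument. The gap is in the $d_1$ assertion, exactly at the point you flagged, and the proposed case split does not close it.

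\textbf{Why the case split fails.} If you use Theorem~\ref{thm-d1-main} as a black box, you have covers only at radii $\e<1/2$. For large $B$ the hypothesis $b\geq C_X(1+B)$ allows the whole range $C_X(1+B)\leq b\ll(1+B)^{n+1}$. There the target radius $\e_b:=C_X(1+B)^{(n+1)/n}b^{-1/n}$ lies strictly between a constant and $C_X(1+B)$. For instance, $n\geq2$ and $b=(1+B)^2$ give $\e_b\asymp(1+B)^{(n-1)/n}$.
\begin{enumerate}
\item The single center $0$ has radius $\asymp1+B$, so it meets the target only when $b\lesssim1+B$.
\item Monotonicity of $\e\mapsto N_\e$ cannot interpolate. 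At any radius $\e\geq1/2$ it only gives $\log N_\e\leq\log N_{\e'}$ for some $\e'<1/2$. That is, $\log N_\e\lesssim2^n(1+B)^{n+1}$, which exceeds $b\log2$ throughout the intermediate range.
\end{enumerate}
So no combination of the stated theorem and the trivial cover yields $2^b$ centers at radius $\e_b$ there.

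\textbf{The missing idea.} Use the proof of the upper bound rather than its statement. The dyadic construction in Section~\ref{sec-coding} gives, for \emph{every} $j\geq0$, an internal net $\mathscr V_j$ of $\cK_B^{\cH}$ with radius $a(1+B)/k_j$ and, by \eqref{eq-dyadic-count}, $\log\#\mathscr V_j\leq C_X(1+B)k_j^n$. No restriction is placed on the radius.
\begin{enumerate}
\item Take the largest $j$ with $C_X(1+B)k_j^n\leq b\log2$. It exists exactly when $b\geq C_X(1+B)k_0^n/\log2$; this is the origin of the hypothesis $b\geq C_X(1+B)$.
\item Maximality gives $C_X(1+B)2^nk_j^n>b\log2$. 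Hence $k_j\gtrsim(b/(1+B))^{1/n}$, and the radius satisfies $a(1+B)/k_j\lesssim(1+B)^{(n+1)/n}b^{-1/n}$.
\end{enumerate}
Equivalently, $H_\e(\cK_B,d_1)\leq C_X(1+B)^{n+1}\e^{-n}$ actually holds for all $\e\leq c_X(1+B)$. The restriction $\e<1/2$ in \eqref{eq-d1-upper-general} enters only through the case $j=0$ of the minimal-index choice, where one needs $k_0\leq C_X(1+B)/\e$. The corollary needs this extended range. Once you have it, your closure step via Proposition~\ref{thm-closure-main} and the common-degree Fubini--Study replacement go through as you wrote.
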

\begin{proof}
For the first assertion, choose $\e=A b^{-1/n}$ in Theorem~\ref{thm-l1-main}, with $A$ fixed so large that $C_XA^{-n}\leq\log2$. This gives a net of at most $2^b$ centers. Approximate each center within $\e$ by a smooth normalized K\"ahler potential using Theorem~\ref{thm-smooth-decreasing} and \eqref{eq-sup-continuity}. The triangle inequality gives \eqref{eq-finite-bit-psh}. Conversely, an arbitrary external dictionary of error $e>0$ gives an internal cover of radius greater than $2e$ by selecting one input from every nonempty dictionary ball. The lower bound in Theorem~\ref{thm-l1-main} therefore implies $b\log2\geq c_Xe^{-n}$, up to a fixed radius factor, and hence $e\geq c_Xb^{-1/n}$.

For \eqref{eq-finite-bit-d1}, use the dyadic construction itself, rather than restricting the stated entropy theorem to small radii. Equation \eqref{eq-dyadic-count} gives, for each $k_j=2^jk_0$, an internal net of radius $a(1+B)/k_j$ and logarithmic cardinality at most $C_X(1+B)k_j^n$. Choose the largest $j$ for which this last quantity is at most $b\log2$. Such $j$ exists for $b\geq C_X(1+B)$, after increasing the fixed constant. Maximality gives
\begin{equation*}
 k_j\asymp_X\left(\frac{b}{1+B}\right)^{1/n},\qquad
 \frac{1+B}{k_j}\asymp_X(1+B)^{(n+1)/n}b^{-1/n}.
\end{equation*}
Apply the smooth-center version of the construction to $\cK_B^{\cH}$.
Since the resulting cover consists of finitely many closed balls,
it also covers its $d_1$-closure $\cK_B$
by Proposition~\ref{thm-closure-main}. 

It remains to justify the common Fubini--Study degree in both assertions. Let $v_1,\ldots,v_M$ be one of these finite smooth normalized dictionaries and set $h_i=h_0e^{-v_i}$. Choose sections $t_{i,q,\alpha}$ orthonormal for $L^2(h_i^q,V^{-1}\omega_{v_i}^n)$. 
For each fixed $i$, the smooth expansion
\cite[Theorem~1]{Zel98} gives
\begin{equation*}
 q^{-n}\sum_\alpha\abs{t_{i,q,\alpha}}_{h_i^q}^2
 =a_{0,i}+O(q^{-1}),\qquad a_{0,i}>0.
\end{equation*}
The remainder is $O(q^{-1})$ in every fixed $C^r$ norm,
with constants depending on $i$ and $r$.
Consequently the smooth Fubini--Study potential
\begin{equation*}
 w_{i,q}=\frac1q\log\left(q^{-n}\sum_\alpha\abs{t_{i,q,\alpha}}_{h_0^q}^2\right)
 =v_i+\frac1q\log\left(q^{-n}\sum_\alpha\abs{t_{i,q,\alpha}}_{h_i^q}^2\right)
\end{equation*}
converges to $v_i$ in $C^\infty$. Also
\begin{equation*}
 \norm{w_{i,q}-\sup_Xw_{i,q}-v_i}_\infty
 \leq2\norm{w_{i,q}-v_i}_\infty\longrightarrow0.
\end{equation*}
For smooth strictly positive potentials, $d_1(a,b)\leq\norm{a-b}_\infty$: the straight path has velocity $b-a$ and each of its Monge--Amp\`ere measures has mass one. The same uniform bound controls $L^1(\nu)$. Since there are finitely many $i$, one sufficiently large common $q$ approximates every dictionary member to the required accuracy. Increasing $q$ also makes $qL$ very ample. The triangle inequality proves both common-degree assertions without increasing cardinality.
\end{proof}

Locally, if $\omega=\ddc\phi_0$, then $u+\phi_0$ and its dictionary approximants are ordinary plurisubharmonic functions. In the polarized case, writing $t_\alpha=f_\alpha e^{\otimes q}$ makes each local approximant
\begin{equation*}
 \phi_0+w_{i,q}-\sup_Xw_{i,q}
 =\frac1q\log\sum_\alpha\abs{f_\alpha}^2-c_{i,q}.
\end{equation*}
These are local representations of the fixed global normalized class.
 Moreover $b$ counts only the index of an element in a dictionary shared in advance. It does not count construction or storage of the dictionary, precision of section coefficients, or computation of the nearest center. In terms of cardinality $M$, the first optimal rate is $(\log M)^{-1/n}$. 

\begin{rem}
\label{rem-linear-green-approximation}
The proof of Lemma~\ref{lem-elliptic-cover} also provides the linear finite-rank approximation
\begin{equation*}
 A_j u=\overline u+\cG P_j\Delta u,\qquad
 \operatorname{rank} A_j\leq C_X2^{2nj},\qquad
 \norm{u-A_ju}_{L^1}\leq C_X2^{-2j}.
\end{equation*}
The rank bound is \eqref{eq-pj-rank}, and the error follows from \eqref{eq-green-blocks} and \eqref{eq-laplacian-tv}. 
Here $\Delta$ is the positive Riemannian Laplacian used in Section \ref{sec-l1}, and $\cG\Delta u=u-\overline u$.
 
 Thus rank $R$ gives error $O_X(R^{-1/n})$. This linear approximation generally does not preserve $\omega$-plurisubharmonicity. Finite rank, positivity-preserving approximation, and finite-bit encoding are different requirements.
\end{rem}

\begin{cor}
\label{cor-degree-controlled-dictionary}
On a polarized manifold, for $b\geq C_X(1+B)$ the dictionary in \eqref{eq-finite-bit-d1} can instead consist of unnormalized Fubini--Study potentials of common degree $m=k+\ell$, where
\begin{equation}
 k\asymp_X\left(\frac b{1+B}\right)^{1/n},\qquad
 m\leq C_X\left(\frac b{1+B}\right)^{1/n}.
 \label{eq-bit-degree}
\end{equation}
The error and cardinality bounds in \eqref{eq-finite-bit-d1} remain valid.
\end{cor}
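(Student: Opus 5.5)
The plan is to reuse the dyadic construction from the proof of the upper bound in Theorem~\ref{thm-d1-main}, but keep the Gaussian decoder outputs $u_G$ instead of internal centers, so that the dictionary consists of unnormalized Fubini--Study potentials of the single degree $m=k+\ell$. First fix the level as in the proof of \eqref{eq-finite-bit-d1}. Take the largest $j$ with $C_X(1+B)k_j^n\leq b\log 2-1$, and set $k=k_j$. For $b\geq C_X(1+B)$ this gives \eqref{eq-bit-degree}, since $m=k+\ell\leq(1+\ell/k_0)k$. At this level we already have an internal net $\mathscr V_{j-1}$ of radius $a(1+B)/k_{j-1}$, with logarithmic cardinality $C_X(1+B)k_{j-1}^n$.

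For each $v\in\mathscr V_{j-1}$ and $U_{j,v}=\cK_B\cap\overline B_{d_1}(v,a(1+B)/k_{j-1})$, I will not select internal centers via Step~3 of Lemma~\ref{lem-relative-entropy-cover}. Instead I will use Step~1 directly. For every $u\in U_{j,v}$, the success event $F_u=\{d_1(u,T_k)\leq 2r\}$, with $r=C_X(1+B)/k$, has $Q_{v,k}$-mass at least $e^{-2(L+\log2)}$, where $L=C_XN_k(1+B)$ by \eqref{eq-common-cost}. A standard probabilistic covering argument then follows. Draw $J$ independent samples $\omega_1,\dots,\omega_J$ from $Q_{v,k}$. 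Each fixed $u$ misses all of them with probability at most $(1-e^{-2(L+\log 2)})^J$.

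To get a uniform statement over uncountably many $u$, I will pass through the finite internal $4r$-net $\mathscr W_{j,v}$ already produced by Proposition~\ref{prop-local-cover}. That net has cardinality at most $e^{2(L+\log2)}$. A union bound over $w\in\mathscr W_{j,v}$ shows that $J=\lceil e^{2(L+\log2)}(1+2(L+\log2))\rceil$ samples hit every $F_w$ with positive probability. Fix such a realization. Then each $w$ is within $2r$ of some decoded $u_G=T_k(\omega_i)$. Decoded values are almost surely genuine basepoint-free Fubini--Study potentials of degree $m$, because the exceptional set is $Q_{v,k}$-null by the last paragraph of the proof of Proposition~\ref{prop-common-law}. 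By the triangle inequality, every $u\in U_{j,v}$ is then within $6r$ of the dictionary. The logarithmic cardinality is $\log J\leq C_X(1+B)k^n$, and summing over $v$ keeps the total at $C_X(1+B)k_j^n\leq b\log2$. The error $6r\asymp(1+B)/k\asymp (1+B)^{(n+1)/n}b^{-1/n}$ matches \eqref{eq-finite-bit-d1}.

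Finally, unnormalized potentials are allowed, so no supremum correction is needed. The dictionary elements are exactly the $u_G$ of \eqref{eq-random-potential}, built from $n+1$ sections of $mL$. The main obstacle is the uniformity step, where the random decoded outputs must work for all inputs simultaneously. The cleanest route is the reduction to the finite internal net described above, with the constant $C_X$ enlarged to absorb the factor $6/4$ in the radius and the additive $\log J$ terms.
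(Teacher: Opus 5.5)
Your argument is correct, but it takes a detour that the paper avoids. The paper's proof is two lines. It takes the level-$j$ internal net $v_1,\dots,v_M\in\cK_B$ of radius $a(1+B)/k_j$ behind \eqref{eq-dyadic-count}, replaces each $v_i$ by $\beta_kv_i=u_{H_k(v_i)}$, and uses Proposition~\ref{prop-full-approximation} with the triangle inequality to get $d_1(u,\beta_kv_i)\leq(a+C_X)(1+B)/k$, with no change in cardinality.

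In your version the ``uniformity obstacle'' is already resolved as soon as you invoke the finite internal net $\mathscr W_{j,v}$. Once every $u$ is within $4r$ of some $w$, you only need one degree-$m$ potential near each $w$. The potential $\beta_kw$ supplies this deterministically; alternatively, a single $G$ with $d_1(w,u_G)\leq C_X(1+B)/k$ exists by \eqref{eq-random-approximation}. Drawing $J$ samples from $Q_{v,k}$ and taking a union bound is valid, but it only inflates the count by the factor $J$ and forces you to re-tune constants.

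The two dictionaries also consist of different objects. Your $u_G$ come from a rank-$(n+1)$ basepoint-free system: $m\omega_{u_G}$ is the pullback of the Fubini--Study form under a finite branched map $X\to\mathbb P^n$. So they are only smooth and semipositive (Lemma~\ref{lem-gaussian-compression}), not elements of $\cH_\omega$, and they are Fubini--Study potentials only in the broad sense. The paper's $\beta_kv_i$ come from positive-definite Hilbert forms. They are therefore strictly $\omega$-psh Fubini--Study potentials of the kind used elsewhere in the paper, consistent with the smooth K\"ahler dictionary of \eqref{eq-finite-bit-d1}.

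What your route does buy is a more compact description of each element: $(n+1)N$ coefficients rather than an $N\times N$ Hermitian form. It also gives a genuine random-codebook interpretation relative to $Q_{v,k}$, though neither is needed for the stated bounds. Finally, for $j=0$ there is no $\mathscr V_{-1}$; use the base case $v=0$, $\delta=C_X(1+B)$ instead.
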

\begin{proof}
Take the internal dyadic net $v_1,\ldots,v_M\in\cK_B$ at the index $k=k_j$ chosen in the proof of Corollary~\ref{cor-finite-bit-psh}. Replace each $v_i$ by $\beta_kv_i$. Proposition~\ref{prop-full-approximation} gives, for the net center assigned to $u$,
\begin{equation*}
 d_1(u,\beta_kv_i)\leq d_1(u,v_i)+d_1(v_i,\beta_kv_i)
 \leq(a+C_X)(1+B)/k.
\end{equation*}
The selected dyadic degree gives \eqref{eq-bit-degree}, absorbing the fixed $\ell$ because $k\geq k_0$. There is no change in the number of centers.
\end{proof}

The normalization in Corollary~\ref{cor-degree-controlled-dictionary} matters. Corollary~\ref{cor-finite-bit-psh} supplies normalized common-degree dictionaries, but its finite-center smoothing argument gives no quantitative bound for that degree. Subtracting $\sup_X\beta_ku$ cannot be assumed to preserve the $O((1+B)/k)$ estimate, since \eqref{eq-d1-constants-e} gives
\begin{equation*}
 d_1(u,\beta_ku-\sup_X\beta_ku)
 \geq \abs{\sup_X\beta_ku}-d_1(u,\beta_ku).
\end{equation*}
The following example makes this obstruction explicit.

\begin{ex}
\label{ex-normalization-loss}
On $(\mathbb P^1,\cO(1))$ with reference weight $\phi_0(z)=\log(1+\abs{z}^2)$, there is a fixed smooth normalized semipositive potential $u$ of finite entropy such that
\begin{equation}
 \sup_X\beta_ku=-\frac{\log k}{k}+O(k^{-1}),\qquad
 d_1(u,\beta_ku-\sup_X\beta_ku)\geq\frac{\log k-C}{k}.
 \label{eq-normalization-loss}
\end{equation}
\end{ex}
\begin{proof}
Choose $R>0$ and a smooth nondecreasing cutoff $\chi(r)$, zero for $r\leq R$ and one for $r\geq2R$. Define $F(0)=0$ and $F'(r)=\chi(r)\phi_0'(r)$. Then $(rF'(r))'\geq0$, so $F$ is subharmonic, zero on the inner disk, and equal to $\phi_0-C_0$ outside the outer disk. Thus $u=F-\phi_0$ extends smoothly to $\mathbb P^1$, is nonincreasing in $r$, and has supremum zero at $0$. Its Monge--Amp\`ere density is smooth bounded and nonnegative, hence has finite entropy $B_0$.

For a polynomial section $f$ of degree at most $m=k+\ell$, its Hilbert norm is
\begin{equation*}
 \norm{f}_{H_k(u)}^2=\int_{\mathbb C}\abs{f}^2e^{-kF-\ell\phi_0}d\nu.
\end{equation*}
Let $K_{k,u}(z)=\sup_{\norm{f}_{H_k(u)}\leq1}\abs{f(z)}^2e^{-m\phi_0(z)}$. On $\abs{z}\leq R/2$, the submean inequality on disks of fixed radius inside $\abs{z}<R$ bounds $\abs{f(z)}^2$ by $C\norm{f}_{H_k(u)}^2$, since $F=0$ there and the remaining density is uniformly positive. Therefore $K_{k,u}(z)\leq C$. The constant polynomial has norm at most one, since $F,\phi_0\geq0$ and $\nu(X)=1$, and hence $K_{k,u}(0)\geq1$.

On the complement of $\abs{z}<R/2$, one has $u\leq-\delta$ for some $\delta>0$. In finitely many local frames the smooth weights $kF+\ell\phi_0$ vary by at most a fixed constant on disks of radius $k^{-1}$. The submean inequality on such a disk, whose area is comparable to $k^{-2}$, gives
\begin{equation*}
 K_{k,u}(z)\leq Ck^2e^{ku(z)}.
\end{equation*}
Since $N_k=m+1$ and $\beta_ku=m^{-1}\log(K_{k,u}/N_k)$, this bounds $\beta_ku$ by $-\delta/2$ on the complement for large $k$. Its supremum is therefore attained on the inner disk and lies between $-\log N_k/m$ and $(-\log N_k+\log C)/m$. This proves the first part of \eqref{eq-normalization-loss}. The second follows from Proposition~\ref{prop-full-approximation} for the fixed budget $B_0$, the preceding constant-shift inequality, and $m=k+\ell$.
\end{proof}

The obstruction also occurs in a uniformly entropy-bounded family of strictly positive metrics: take $u_k=(1-k^{-2})u$. Its density is $(1-k^{-2})\MA(u)/\nu+k^{-2}$, uniformly bounded. Since $\norm{u_k-u}_\infty=O(k^{-2})$, comparison in \eqref{eq-gu-hilbert} gives $\norm{\beta_ku_k-\beta_ku}_\infty\leq(k/m)\norm{u_k-u}_\infty$. Thus the same logarithmic supremum term persists. This concerns the specific Bergman map $\beta_k$; it does not preclude another normalized dictionary construction.

The corresponding logarithmic upper bound follows from the classical point-extension method originated from Demailly. We state the additional positivity choice explicitly rather than deducing it from the energy estimate.

\begin{lem}
\label{lem-normalized-bergman}
After increasing the fixed auxiliary integer $\ell$ if necessary, one has, for all $u\in\cK_B$ and $k\geq k_0$,
\begin{equation}
 -\frac{n\log k+C_X}{k+\ell}\leq\sup_X\beta_ku\leq\frac{C_X}{k^2},\qquad
 d_1(u,\beta_ku-\sup_X\beta_ku)
 \leq\frac{C_X(1+B+\log k)}k.
 \label{eq-normalized-bergman}
\end{equation}
\end{lem}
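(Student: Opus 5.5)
The proof has three parts: the upper bound on the supremum, the lower bound via a point-extension argument, and the distance estimate.

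\emph{Upper bound.} Since $u\leq0$, we have $e^{-ku}\geq1$. By \eqref{eq-evaluation-column} and \eqref{eq-gu-hilbert} this gives $H_k(u)\geq I_N$, hence $H_k(u)^{-1}\leq I_N$. Therefore $s^*H_u^{-1}s\leq\rho_m$, and so $\beta_ku\leq u_I$. The reference expansion \eqref{eq-gu-reference-expansion} gives $u_I\leq C_X/k^2$, which is the upper bound in \eqref{eq-normalized-bergman}.

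\emph{Lower bound.} Here I would use Demailly's point-extension method, evaluating at a point $x_0$ with $u(x_0)=0$. One exists because $u$ is upper semicontinuous with supremum zero; if needed, I would use points where $u>-1/k$. The aim is to find a section $t\in H^0(X,mL)$ with $\abs{t(x_0)}_{h_0^m}^2=1$ and $\int_X\abs{t}^2_{h_0^m}e^{-ku}d\nu\leq Ck^{n}$. Given such a $t$, the extremal characterization $s^*H^{-1}s(x_0)=\sup\{\abs{t(x_0)}^2:\norm{t}_H\leq1\}$ yields $s^*H_u^{-1}s(x_0)\geq C^{-1}k^{-n}$. Then
\begin{equation*}
 \beta_ku(x_0)\geq\frac{1}{m}\left(-\log C-n\log k-\log N_k\right).
\end{equation*}
This would leave $-2n\log k$, so the bookkeeping must improve. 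Since $N_k\asymp k^n$, the correct target is an $L^2$ norm bounded by a constant, not by $k^n$. I would build $t$ by the Ohsawa--Takegoshi extension theorem from the point $x_0$, with weight $h_0^{\ell}\,(h_0e^{-u})^k$ on $mL\otimes K_X^{-1}\otimes K_X$. The curvature condition $\ell\omega+\Ric(\nu)\geq\omega$, strengthened by enlarging $\ell$ so that it also dominates the $K_X$-twist and the $n\log\abs{z-x_0}^2$ cutoff weight, is exactly the additional positivity the statement allows. Ohsawa--Takegoshi gives $\int\abs{t}^2e^{-ku}h_0^m\,d\nu\leq C\abs{t(x_0)}^2e^{-ku(x_0)}=C$, independently of $u$ and $k$. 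Then $s^*H_u^{-1}s(x_0)\geq C^{-1}$, and the bound $\log N_k\leq n\log k+C_X$ gives the stated lower bound $-(n\log k+C_X)/(k+\ell)$. The one subtlety is that $u$ is merely in $\cE^1$. I would handle this by applying the theorem to the smooth decreasing approximants $u_j$ of Theorem~\ref{thm-smooth-decreasing}, with extension points where $u_j$ attains its supremum, and passing to the limit by the monotone convergence already used in Lemma~\ref{lem-gu-hilbert}. Alternatively, the version of Ohsawa--Takegoshi with singular psh weights applies directly, since $u(x_0)>-\infty$.

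\emph{Distance estimate.} By \eqref{eq-d1-constants-e} and the triangle inequality,
\begin{equation*}
 d_1(u,\beta_ku-\sup_X\beta_ku)\leq d_1(u,\beta_ku)+\abs{\sup_X\beta_ku}.
\end{equation*}
Proposition~\ref{prop-full-approximation} bounds the first term by $C_X(1+B)/k$, and the two-sided bound just proved gives $\abs{\sup_X\beta_ku}\leq(n\log k+C_X)/k$. Together these give the stated distance estimate.

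I expect the main obstacle to be the uniform Ohsawa--Takegoshi step. The extension constant must be independent of $k$, $u$, and $x_0$. This requires checking that the singular weight $n\log\abs{z-x_0}^2$, patched globally by a cutoff, contributes a bounded negative curvature that a fixed enlargement of $\ell$ absorbs uniformly in $x_0$.
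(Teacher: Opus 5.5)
Your proposal is correct and follows essentially the same route as the paper. The upper bound comes from $H_k(u)\geq I_N$, which gives $\beta_ku\leq u_I$. The lower bound comes from point extension at a maximum point of $u$: local Ohsawa--Takegoshi, a cutoff, and a $\bar\partial$-correction with the singular weight $n\log\abs{z-x}^2$ absorbed by enlarging $\ell$, followed by the extremal characterization and $N_k\leq C_Xk^n$. The distance bound comes from the triangle inequality, $d_1(w,w+c)=\abs{c}$, and Proposition~\ref{prop-full-approximation}. The paper takes your second option and applies the extension directly to the singular weight $ku$, which is finite at the maximum point, so the smooth-approximation detour is not needed.
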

\begin{proof}
Local Ohsawa--Takegoshi extension from a point, \cite[Corollary~13.9 and proof of Theorem~14.2]{Dem11}, followed by the singular weighted $\bar\partial$ estimate \cite[Corollary~5.3]{Dem11}, gives, for a sufficiently positive fixed twist,
\begin{equation}
 K_{k,u}(x):=s(x)^*H_k(u)^{-1}s(x)\geq c_Xe^{ku(x)}
 \quad\text{whenever }u(x)>-\infty.
 \label{eq-point-extension-kernel}
\end{equation}
Cover $X$ by smaller coordinate balls compactly contained in trivializing balls of fixed radii. On a ball containing $x$, the weight $k(\phi_0+u)$ is plurisubharmonic. The local point-extension theorem supplies a holomorphic $f$ with prescribed value, chosen so that
\begin{equation*}
 \abs{f(x)}^2e^{-m\phi_0(x)}=e^{ku(x)},\qquad
 \int\abs{f}^2e^{-k(\phi_0+u)-\ell\phi_0}d\nu\leq C_X.
\end{equation*}
The additional factors involving the fixed weight $\ell\phi_0$ and the smooth volume are uniformly comparable on the finitely many balls.

Choose a cutoff equal to one near $x$ and a global weight $\psi_x\leq C_X$ equal to $n\log\abs{z-x}^2$ there, with $\ddc\psi_x\geq-C_X\omega$. A cutoff of this local logarithm has these bounds uniformly in $x$, since its derivatives are taken on an annulus of fixed radii. Enlarge $\ell$ so that
\begin{equation*}
 \ell\omega+\Ric(\nu)+\ddc\psi_x\geq\omega
 \quad\text{for every }x.
\end{equation*}
The adjoint singular metric with weight $ku+\psi_x$ on $mL\otimes K_X^{-1}$ then has positive curvature, because its remaining term is $k(\omega+\ddc u)\geq0$. Corollary~5.3 of \cite{Dem11} solves $\bar\partial g=\bar\partial(\chi f)$ with
\begin{equation*}
 \int_X\abs{g}_{h_0^m}^2e^{-ku-\psi_x}d\nu
 \leq C_X\int_{\supp d\chi}\abs{f}_{h_0^m}^2e^{-ku}d\nu\leq C_X.
\end{equation*}
The right side is supported away from $x$, where the singular factor and cutoff derivatives are uniformly bounded. Near $x$, $g$ is holomorphic and weighted integrability against $\abs{z-x}^{-2n}$ forces $g(x)=0$; here $u\leq0$ ensures that $e^{-ku}\geq1$. Since $\psi_x$ is uniformly bounded above, the unweighted Hilbert norm of $g$ is bounded as well. Thus $t=\chi f-g$ is a global holomorphic section with $t(x)=f(x)$ and $\norm{t}_{H_k(u)}^2\leq C_X$. The extremal characterization of the evaluation kernel proves \eqref{eq-point-extension-kernel}.

At a maximum point of $u$, one has $u(x)=0$, so \eqref{eq-point-extension-kernel} and $N_k\leq C_Xk^n$ yield $\sup\beta_ku\geq-(n\log k+C_X)/m$. Conversely $u\leq0$ gives $H_k(u)\geq I_N$ and hence $\beta_ku\leq u_I\leq C_X/k^2$ by \eqref{eq-gu-reference-expansion}. The triangle inequality, \eqref{eq-d1-constants-e}, and Proposition~\ref{prop-full-approximation} give the remaining bound in \eqref{eq-normalized-bergman}. Example~\ref{ex-normalization-loss} shows that the logarithm cannot generally be removed for this normalization of $\beta_k$.
\end{proof}

\bibliographystyle{amsplain}
\bibliography{entropy3}

\providecommand{\bysame}{\leavevmode\hbox to3em{\hrulefill}\thinspace}
\providecommand{\MR}{\relax\ifhmode\unskip\space\fi MR }
\providecommand{\MRhref}[2]{%
  \href{http://www.ams.org/mathscinet-getitem?mr=#1}{#2}
}
\providecommand{\href}[2]{#2}
\begin{thebibliography}{10}

\bibitem{Abr00}
Miguel Abreu, \emph{K\"ahler geometry of toric manifolds in symplectic coordinates}, Symplectic and contact topology: interactions and perspectives, Fields Inst. Commun., vol.~35, Amer. Math. Soc., Providence, RI, 2003, arXiv:math/0004122, pp.~1--24.

\bibitem{Bab58}
Konstantin~I. Babenko, \emph{On the entropy of a class of analytic functions}, Nauchn. Dokl. Vyssh. Shkoly Ser. Fiz.-Mat. Nauk (1958), no.~2, 9--16, In Russian.

\bibitem{BN22}
Oscar~F. Bandtlow and St\'ephanie Nivoche, \emph{New solution of a problem of {K}olmogorov on width asymptotics in holomorphic function spaces}, J. Eur. Math. Soc. \textbf{24} (2022), no.~7, 2493--2532.

\bibitem{BB10}
Robert Berman and S{\'e}bastien Boucksom, \emph{Growth of balls of holomorphic sections and energy at equilibrium}, Invent. Math. \textbf{181} (2010), no.~2, 337--394.

\bibitem{BBGZ13}
Robert Berman, S\'{e}bastien Boucksom, Vincent Guedj, and Ahmed Zeriahi, \emph{A variational approach to complex {M}onge--{A}mp\`ere equations}, Publ. Math. Inst. Hautes \'{E}tudes Sci. \textbf{117} (2013), 179--245, Section and proposition locators follow the arXiv version.

\bibitem{BDL17}
Robert Berman, Tam{\'a}s Darvas, and Chinh Lu, \emph{Convexity of the extended {K}-energy and the large time behavior of the weak {Calabi} flow}, Geom. Topol. \textbf{21} (2017), no.~5, 2945--2988 (English).

\bibitem{BF14}
Robert Berman and Gerard Freixas~i Montplet, \emph{An arithmetic {H}ilbert--{S}amuel theorem for singular hermitian line bundles and cusp forms}, Compos. Math. \textbf{150} (2014), no.~10, 1703--1728.

\bibitem{Ber14}
Robert~J. Berman, \emph{Determinantal point processes and fermions on complex manifolds: large deviations and bosonization}, Commun. Math. Phys. \textbf{327} (2014), no.~1, 1--47 (English).

\bibitem{Berm18}
\bysame, \emph{Determinantal point processes and fermions on polarized complex manifolds: bulk universality}, Algebraic and analytic microlocal analysis. AAMA, Evanston, Illinois, USA, May 14--26, 2012 and May 20--24, 2013. Contributions of the workshops, Cham: Springer, 2018, pp.~341--393 (English).

\bibitem{BB13}
Robert~J. Berman and Bo~Berndtsson, \emph{Real {Monge}-{Amp{\`e}re} equations and {K{\"a}hler}-{Ricci} solitons on toric log {Fano} varieties}, Ann. Fac. Sci. Toulouse, Math. (6) \textbf{22} (2013), no.~4, 649--711 (English).

\bibitem{BBEGZ19}
Robert~J. Berman, S\'{e}bastien Boucksom, Philippe Eyssidieux, Vincent Guedj, and Ahmed Zeriahi, \emph{K\"ahler--{E}instein metrics and the {K}\"ahler--{R}icci flow on log {F}ano varieties}, J. Reine Angew. Math. \textbf{751} (2019), 27--89.

\bibitem{BBJ21}
Robert~J. Berman, S{\'e}bastien Boucksom, and Mattias Jonsson, \emph{A variational approach to the {Yau}-{Tian}-{Donaldson} conjecture}, J. Am. Math. Soc. \textbf{34} (2021), no.~3, 605--652 (English).

\bibitem{Ber09}
Bo~Berndtsson, \emph{Probability measures associated to geodesics in the space of {K{\"a}hler} metrics}, Algebraic and analytic microlocal analysis. AAMA, Evanston, Illinois, USA, May 14--26, 2012 and May 20--24, 2013. Contributions of the workshops, Cham: Springer, 2018, pp.~395--419 (English).

\bibitem{BirSol67}
M.~Sh. Birman and M.~Z. Solomyak, \emph{Piecewise-polynomial approximations of functions of the classes {$W_p^\alpha$}}, Math. USSR-Sb. \textbf{2} (1967), no.~3, 295--317.

\bibitem{BK07}
Zbigniew B{\l}ocki and S{\l}awomir Ko{\l}odziej, \emph{On regularization of plurisubharmonic functions on manifolds}, Proc. Amer. Math. Soc. \textbf{135} (2007), no.~7, 2089--2093.

\bibitem{BEGZ10}
S\'{e}bastien Boucksom, Philippe Eyssidieux, Vincent Guedj, and Ahmed Zeriahi, \emph{Monge--{A}mp\`ere equations in big cohomology classes}, Acta Math. \textbf{205} (2010), no.~2, 199--262.

\bibitem{BLY94}
Jean-Pierre Bourguignon, Peter Li, and Shing~Tung Yau, \emph{Upper bound for the first eigenvalue of algebraic submanifolds}, Comment. Math. Helv. \textbf{69} (1994), no.~2, 199--207 (English).

\bibitem{Bro76}
E.~M. Bronshtein, \emph{{$\varepsilon$}-entropy of convex sets and functions}, Siberian Math. J. \textbf{17} (1976), no.~3, 393--398.

\bibitem{Chen00}
Xiuxiong Chen, \emph{On the lower bound of the {Mabuchi} energy and its application}, Int. Math. Res. Not. \textbf{2000} (2000), no.~12, 607--623 (English).

\bibitem{CGSZ19}
Dan Coman, Vincent Guedj, Sibel Sahin, and Ahmed Zeriahi, \emph{Toric pluripotential theory}, Ann. Polon. Math. \textbf{123} (2019), 215--242, Locators follow arXiv:1804.03387.

\bibitem{Dar15}
Tam\'{a}s Darvas, \emph{The {M}abuchi geometry of finite energy classes}, Adv. Math. \textbf{285} (2015), 182--219, Theorem and corollary locators follow arXiv:1409.2072v2.

\bibitem{Dar19}
\bysame, \emph{Geometric pluripotential theory on {K}\"ahler manifolds}, Advances in complex geometry, Contemp. Math., vol. 735, Amer. Math. Soc., Providence, RI, 2019, arXiv:1902.01982, pp.~1--104.

\bibitem{DLR20}
Tam\'{a}s Darvas, Chinh~H. Lu, and Yanir~A. Rubinstein, \emph{Quantization in geometric pluripotential theory}, Comm. Pure Appl. Math. \textbf{73} (2020), no.~5, 1100--1138, Theorem and equation locators follow the arXiv version.

\bibitem{Dem11}
Jean-Pierre Demailly, \emph{Analytic methods in algebraic geometry}, Surveys of Modern Mathematics, vol.~1, International Press, Somerville, MA, 2012, Locators follow the author's July 2011 manuscript.

\bibitem{Dem}
\bysame, \emph{Complex analytic and differential geometry}, 2012, Online monograph.

\bibitem{Dev98}
Ronald~A. DeVore, \emph{Nonlinear approximation}, Acta Numer. \textbf{7} (1998), 51--150.

\bibitem{Don02}
S.~K. Donaldson, \emph{Scalar curvature and stability of toric varieties}, J. Differential Geom. \textbf{62} (2002), no.~2, 289--349.

\bibitem{Don09}
\bysame, \emph{Some numerical results in complex differential geometry}, Pure Appl. Math. Q. \textbf{5} (2009), no.~2, 571--618.

\bibitem{Ero58}
Vladimir~D. Erokhin, \emph{On asymptotics of $\varepsilon$-entropy of analytic functions}, Dokl. Akad. Nauk SSSR \textbf{120} (1958), 949--952, In Russian.

\bibitem{Fin26}
Siarhei Finski, \emph{Metric entropy of the space of holomorphic functions}, 2026, arXiv:2607.18890.

\bibitem{GW17}
Fuchang Gao and Jon~A. Wellner, \emph{Entropy of convex functions on {$\mathbb R^d$}}, Constr. Approx. \textbf{46} (2017), no.~3, 565--592.

\bibitem{Gil52}
Edgar~N. Gilbert, \emph{A comparison of signalling alphabets}, Bell System Tech. J. \textbf{31} (1952), 504--522.

\bibitem{Gri09}
Alexander Grigor'yan, \emph{Heat kernel and analysis on manifolds}, AMS/IP Studies in Advanced Mathematics, vol.~47, Amer. Math. Soc., Providence, RI, 2009.

\bibitem{Guan99}
Daniel Guan, \emph{On modified {Mabuchi} functional and {Mabuchi} moduli space of {K{\"a}hler} metrics on toric bundles}, Math. Res. Lett. \textbf{6} (1999), no.~5-6, 547--555 (English).

\bibitem{Gue14}
Vincent Guedj, \emph{The metric completion of the {R}iemannian space of {K}\"ahler metrics}, 2014, arXiv:1401.7857.

\bibitem{GZ17}
Vincent Guedj and Ahmed Zeriahi, \emph{Degenerate complex {Monge}-{Amp{\`e}re} equations}, EMS Tracts Math., vol.~26, Z{\"u}rich: European Mathematical Society (EMS), 2017 (English).

\bibitem{Gun11}
Adityanand Guntuboyina, \emph{Lower bounds for the minimax risk using {$f$}-divergences, and applications}, IEEE Trans. Inform. Theory \textbf{57} (2011), no.~4, 2386--2399.

\bibitem{HJ13}
Roger~A. Horn and Charles~R. Johnson, \emph{Matrix analysis}, second ed., Cambridge University Press, 2013.

\bibitem{HKPV06}
J.~Ben Hough, Manjunath Krishnapur, Yuval Peres, and B\'{a}lint Vir\'{a}g, \emph{Determinantal processes and independence}, Probab. Surv. \textbf{3} (2006), 206--229.

\bibitem{KT59}
A.~N. Kolmogorov and V.~M. Tikhomirov, \emph{{$\varepsilon$}-entropy and {$\varepsilon$}-capacity of sets in function spaces}, Uspekhi Mat. Nauk \textbf{14} (1959), no.~2(86), 3--86, English translation: Amer. Math. Soc. Transl. Ser. 2, 17 (1961), 277--364.

\bibitem{Mab87}
Toshiki Mabuchi, \emph{Some symplectic geometry on compact {K}\"ahler manifolds. {I}}, Osaka J. Math. \textbf{24} (1987), no.~2, 227--252.

\bibitem{Niv04}
St\'ephanie Nivoche, \emph{Proof of a conjecture of {Z}ahariuta concerning a problem of {K}olmogorov on the $\varepsilon$-entropy}, Invent. Math. \textbf{158} (2004), no.~2, 413--450.

\bibitem{SZ99}
Bernard Shiffman and Steve Zelditch, \emph{Distribution of zeros of random and quantum chaotic sections of positive line bundles}, Comm. Math. Phys. \textbf{200} (1999), 661--683.

\bibitem{Tia90}
Gang Tian, \emph{On a set of polarized {K}\"ahler metrics on algebraic manifolds}, J. Differential Geom. \textbf{32} (1990), no.~1, 99--130.

\bibitem{Var57}
R.~R. Varshamov, \emph{Estimate of the number of signals in error correcting codes}, Dokl. Akad. Nauk SSSR \textbf{117} (1957), 739--741.

\bibitem{Yau78}
Shing-Tung Yau, \emph{On the {R}icci curvature of a compact {K}\"ahler manifold and the complex {M}onge--{A}mp\`ere equation. {I}}, Comm. Pure Appl. Math. \textbf{31} (1978), no.~3, 339--411.

\bibitem{Zak94}
Vyacheslav~P. Zakharyuta, \emph{Spaces of analytic functions and complex potential theory}, Linear Topol. Spaces Complex Anal. \textbf{1} (1994), 74--146.

\bibitem{Zel98}
Steve Zelditch, \emph{Szeg{\H{o}} kernels and a theorem of {T}ian}, Internat. Math. Res. Notices (1998), no.~6, 317--331.

\bibitem{Zha24}
Kewei Zhang, \emph{A quantization proof of the uniform {Y}au--{T}ian--{D}onaldson conjecture}, J. Eur. Math. Soc. \textbf{26} (2024), no.~12, 4763--4778.

\end{thebibliography}
\end{document}